\documentclass[a4paper,12pt]{amsproc}

\usepackage{xargs}
\usepackage{mathtools, todonotes}
\usepackage{amsmath,amssymb,amsfonts,amsthm}

\usepackage[colorlinks, backref]{hyperref}
\usepackage{amsrefs}
\usepackage{enumerate}

\AddToHook{bfseries}{\boldmath}

\usepackage[
  a4paper,
  left=27mm,
  right=27mm,
  top=27mm,
  bottom=30mm
]{geometry}

\newcommand{\norm}[1]{\left\lvert#1\right\rvert}
\newcommand{\card}[1]{\left\lvert#1\right\rvert}
\newcommand{\perpnotin}{\mathrel{\perp\!\!\!\perp}}
\DeclareMathOperator{\diam}{diam}
\DeclareMathOperator{\Sym}{Sym}
\DeclareMathOperator{\GL}{GL}
\DeclareMathOperator{\rk}{rk}
\DeclareMathOperator{\GI}{GI}
\DeclareMathOperator{\GO}{GO}
\DeclareMathOperator{\SO}{SO}
\DeclareMathOperator{\Sp}{Sp}
\DeclareMathOperator{\SU}{SU}
\DeclareMathOperator{\SL}{SL}
\DeclareMathOperator{\GU}{GU}
\DeclareMathOperator{\PSU}{PSU}
\DeclareMathOperator{\PSL}{PSL}
\DeclareMathOperator{\PSp}{PSp}
\DeclareMathOperator{\POmega}{P\Omega}
\DeclareMathOperator{\PGO}{PGO}
\DeclareMathOperator{\Sz}{Sz}
\DeclareMathOperator{\Alt}{Alt}
\newcommandx{\gensubgrp}[2][2=\empty]{\langle#1\ifx#2\empty\else\,|\,#2\fi\rangle}
\newcommandx{\gennorsubgrp}[2][2=\empty]{\langle\!\langle#1\ifx#2\empty\else\,|\,#2\fi\rangle\!\rangle}
\newcommandx{\gensubsp}[2][2=\empty]{\langle#1\ifx#2\empty\else\,|\,#2\fi\rangle}
\newcommand{\rest}[1]{\left.#1\right\rvert}
\newcommandx{\set}[2][2=\empty]{\{#1\ifx#2\empty\else\,|\,#2\fi\}}
\DeclareMathOperator{\diag}{diag}
\DeclareMathOperator{\ad}{ad}
\DeclareMathOperator{\im}{im}
\DeclareMathOperator{\Aut}{Aut}
\DeclareMathOperator{\End}{End}
\DeclareMathOperator{\lcm}{lcm}
\DeclareMathOperator{\adj}{adj}
\DeclareMathOperator{\fix}{fix}
\DeclareMathOperator{\FGL}{FGL}
\DeclareMathOperator{\FSp}{FSp}
\DeclareMathOperator{\FOmega}{F\Omega}
\DeclareMathOperator{\FSU}{FSU}
\DeclareMathOperator{\PFGL}{PFGL}

\DeclareMathOperator{\id}{id}

\theoremstyle{plain}
\newtheorem{definition}{Definition}[section]
\newtheorem{theorem}[definition]{Theorem}
\newtheorem{lemma}[definition]{Lemma}
\newtheorem{corollary}[definition]{Corollary}
\newtheorem{proposition}[definition]{Proposition}

\newtheorem{thmintro}{Theorem}

\newtheorem{conjintro}[thmintro]{Conjecture}
\newtheorem{conjsintro}[thmintro]{Conjectures}

\theoremstyle{definition}
\newtheorem{remark}[definition]{Remark}
\newtheorem{example}[definition]{Example}

\title{Mixed identities for simple locally finite groups}
\author[H. Bradford]{Henry Bradford}
\address{H.~Bradford, Christ's College, Univ.\ of Cambridge, St Andrew's St., Cambridge CB2 3BU, United Kingdom}
\email{hb470@cam.ac.uk}
\author[K. Ersoy]{Kıvanç Ersoy}
\address{K.~Ersoy, Freie Universität Berlin, 14195 Berlin, Germany}
\email{ersoy@zedat.fu-berlin.de}
\author[J. Schneider]{Jakob Schneider}
\address{J.~Schneider, TU Dresden, 01062 Dresden, Germany}
\email{jakob.schneider@tu-dresden.de}
\author[A. Thom]{Andreas Thom}
\address{A.~Thom, TU Dresden, 01062 Dresden, Germany}
\email{andreas.thom@tu-dresden.de}

\begin{document}
\begin{abstract}
A mixed identity of a group is a nontrivial word with constants that
vanishes under every substitution of its variables. 
We derive lower bounds for the length of mixed identities in finite simple groups of Lie type, and characterise exactly those families of such groups of bounded rank 
which satisfy mixed identities of bounded length. We classify the infinite simple locally finite groups admitting a mixed identity: apart from an explicit list of alternating, finitary linear classical, and non-simply-laced groups of Lie type, no such group exists. For the groups in this list, we determine when mixed identities must be singular and obtain restrictions on their critical constants. Moreover, we prove that simple compact Lie groups do not admit mixed identities.
\end{abstract}
\maketitle

\tableofcontents
	
\section{Introduction}

\emph{Group identities} (also called \emph{group laws}) are a classical tool for detecting algebraic constraints in groups. Recall that a law for the group $G$ is a nontrivial element $w$ 
of the free group $\mathbf{F}_r$ whose associated word map $G^r\rightarrow G$ 
vanishes on $G^r$. A theorem of Jones~\cite{jones1974varieties} shows that a fixed nontrivial identity is
satisfied by only finitely many isomorphism classes of nonabelian finite
simple groups. Allowing the identity also to contain constants from the group leads to the more flexible
notion of a \emph{mixed identity}. A special class of mixed identities are those that collapse completely after removal of the constants, they are called \emph{singular}. A particular class of nonsingular mixed identities is formed by those for which no cancellation of variables occurs after removal of all constants, they are called \emph{strict}.
In contrast to ordinary identities, mixed
identities reflect not only the global structure of the group but also the
position and conjugacy properties of the constants involved.

The systematic study of mixed identities goes back to Anashin
\cites{anashin1978mixed,anashin1987mixed}. Foundational results for
classical, linear, and algebraic groups were obtained by
Golubchik--Mikhalev \cite{golubchikmikhalev1982generalized}, Tomanov \cite{tomanov1985generalized}, and Gordeev \cite{gordeev1997freedom}. In particular, Tomanov proved that a linear group
admits a strict mixed identity if and only if it is solvable-by-finite. More recently, Hull and Osin related the
absence of mixed identities to highly transitive actions and
acylindrical hyperbolicity \cite{hullosin2016transitivity}, while
Gordeev, Kunyavski\u{\i}, and Plotkin studied word maps with constants on
simple algebraic groups \cite{gordeevkunyavskiiplotkin2018word}. The asymptotic behaviour of the lengths of mixed identities satisfied by 
finite groups has been studied by subsets of the authors 
\cites{bradfordschneiderthom2023length,bradfordschneiderthom2023non,schneiderthom2022word}.

Infinite simple locally finite groups provide a natural setting for this
study. They are assembled from finite simple groups, but their global
structure ranges from linear groups over locally finite fields to genuinely
nonlinear direct limits. 
For general background on locally finite groups and surveys of the
simple case, see
\cites{kegelwehrfritz1973locally,hartley1995simple,meierfrankenfeld2011locally}.

The purpose of this paper is twofold. First, we seek to determine which
infinite simple locally finite groups admit mixed identities and, for those which do so, to describe the restrictions imposed on the constants. 
Second, we seek to bound the length of a shortest mixed identity 
satisfied by a given nonabelian finite simple group from below, 
with an emphasis on groups of Lie type of bounded rank. 
The results we achieve in pursuit of the second goal shall also be 
tools in pursuit of the first.

Let $G$ be a simple locally finite group (which may be finite or infinite). The group $G$ is called \emph{linear} if it occurs as a subgroup of $\GL_n(K)$ for some finite $n$ and a field $K$ and \emph{nonlinear} otherwise. It is a well-known and celebrated result of Belyaev, Borovik, Hartley--Shute, and Thomas \cites{belyaev1984locally, borovik1983embeddings, hartleyshute1984monomorphisms, thomas1983classification} that every infinite linear simple locally finite group is of the form $G=[{}^a\mathbf{G}(E),{}^a\mathbf{G}(E)]$ for an infinite locally finite field $E$ and a simple group of Lie type $\mathbf{G}$ which is twisted by an outer automorphism of order $a\in\set{1,2,3}$, where $a=1$ corresponds to the untwisted case, i.e.
$$
{}^a\mathbf{G}\in\set{A_n,{}^2A_n,B_n,C_n,D_n,{}^2D_n, E_6,{}^2E_6, E_7,E_8, F_4, {}^2F_4,G_2, {}^2G_2, {}^2B_2,  {}^3D_4}. 
$$
Here we need to consider the derived subgroup for $G$ if and only if we are in an orthogonal case $B_n$, $D_n$, ${}^2D_n$, ${}^3D_4$. In all other cases ${}^a\mathbf{G}(E)$ is already perfect.

A larger class of simple locally finite groups consists of those that act by finitary linear transformations on a vector space over a locally finite field. By the classification theorem of  Hall \cite{hall2006periodic}, those include in addition the nonlinear groups
$\Alt(\Omega)$, where $\Omega$ is an infinite set, and the classical groups $\FSp(V,f)$, $\FSU(V,f)$, $\FOmega(V,Q)$, and $\SL(V,W,m)$ over a locally finite field $E$, where $V$ is an infinite-dimensional $E$-vector space with a nondegenerate symplectic resp.\ unitary form $f$, or a quadratic form $Q$ on it, and $m$ is a nondegenerate pairing of the infinite-dimensional
$E$-spaces $V$ and $W$ (cf.\ Theorem~\ref{thm:hall} below).

\medskip

Our first main theorem is as follows.

\begin{thmintro}\label{thm:main}
Let $G$ be an infinite simple locally finite group. Then $G$ admits a mixed identity if and only if it is isomorphic to a group in the following list:
\begin{enumerate}[(i)]
\item $\Alt(\Omega)$ for an infinite set $\Omega$;
\item a classical finitary linear group over a finite field;
\item a finitary symplectic group over an infinite locally finite field;
\item a simple group of Lie type $B_n$, $C_n$, $F_4$, or $G_2$ over an infinite locally finite field.
\end{enumerate}
Moreover, in  Cases~(i)--(iii), every mixed identity is singular.
\end{thmintro}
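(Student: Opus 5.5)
Since $G$ is infinite, simple and locally finite, the proof splits according to whether $G$ is finitary linear. First I invoke the Kegel--Meierfrankenfeld--Hall trichotomy. Either $G$ is finitary, so Hall's classification (Theorem~\ref{thm:hall}) lists it among $\Alt(\Omega)$, the finitary classical groups and the linear groups ${}^a\mathbf{G}(E)$, or $G$ has a non-finitary Kegel cover. In the non-finitary case I aim to show that there is no mixed identity. Fix a candidate $w\in G*\mathbf{F}_r$. Its constants lie in a finite subgroup $H$. Using a Kegel sequence $(G_i,N_i)$ with $H\le G_i$, $H\cap N_i=1$, and simple quotients $G_i/N_i$ of unbounded "rank" (alternating degree or Lie rank) in which $H$ embeds in a non-finitary, spread-out way, I push $w$ down to $G_i/N_i$. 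There I use the length lower bounds proved earlier for mixed identities in alternating and classical groups; these say roughly that a short word cannot vanish unless the constants have small support or small residual space. This contradicts non-finitariness for large $i$. In the same way, the linear groups of simply-laced types $A_n,{}^2A_n,D_n,{}^2D_n,E_{6,7,8},{}^3D_4,{}^2E_6$ (and the Suzuki/Ree types ${}^2B_2,{}^2G_2,{}^2F_4$) over infinite $E$ are excluded. Here I write $G$ as a union of $\mathbf{G}(q_i)$ with $q_i\to\infty$ and apply the bounded-rank characterisation from earlier. Mixed identities of bounded length exist only in the non-simply-laced untwisted families with $q$ a power of the special characteristic ($2$ for $B,C,F_4$, $3$ for $G_2$), where long and short root subgroups commute via the degenerate commutator relation. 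This forces $\mathrm{char}\,E$ to be bad, and otherwise there is no identity. The same reasoning excludes the finitary $\SL$, $\FSU$ and $\FOmega$ over infinite fields.

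For the existence direction I write explicit words. In case (i), pick a $3$-cycle $c$ and take a commutator such as $[c,x c x^{-1}]$ combined with a power. Since $c$ and its conjugates have supports meeting in at most three points, a word like $[[c,c^x],[c,c^x]^{y}]^{m}$ works, with $m$ chosen to kill all permutations of bounded support; this is singular. In cases (ii) and (iii) I take a transvection $t$. Any conjugates $t,t^x$ generate a subgroup of $\SL_2$ acting on a $2$-dimensional space, and over a finite field this is a finite group of bounded exponent $e$. Over any locally finite field of characteristic $2$ in the symplectic case, $\langle t,t^x\rangle$ is dihedral or lies in a root $\SL_2$ with controlled structure. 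Then $w=[t,t^{x}]^{e}$ suitably iterated gives a singular identity. In case (iv) I use the long/short root elements $u$ and the special isogeny, producing a word $[u, u'^{x}]$-type identity in characteristic $2$ (or $3$ for $G_2$).

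Finally, for singularity in (i)--(iii), I show that every nonsingular word fails. Given $w$ with nontrivial constant-free image, the finite subgroup $H$ of constants fixes a cofinite subspace (resp. subset), so $G$ contains a copy of $\SL_n(q)$ or $\Alt(n)$ for unbounded $n$ commuting with $H$. Substituting variables from it reduces $w$ to a word with the constants cancelling off a large centraliser, and Jones' theorem prevents this from being a law. I expect the main obstacle to be the non-finitary exclusion: it needs uniform control of how the finite subgroup $H$ sits in the Kegel quotients, together with the quantitative length bounds, to hold across all families at once.
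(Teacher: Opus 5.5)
\paragraph{The central error.} You claim that, in bounded rank, bounded-length mixed identities exist only for $B_n,C_n,F_4$ in characteristic $2$ and for $G_2$ in characteristic $3$. This is false. For all four non-simply-laced types they exist over every field (Theorem~\ref{thm:gordeev_short_ids}). Outside \eqref{eq:exc_cses} the constants are a G-small semisimple element together with a long root element.

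Concretely, take $\Sp(V,f)$ with $p\neq 2$. Let $s$ act as $-1$ on a hyperbolic plane $P$ and trivially on $P^\perp$, and let $t=t_v\colon u\mapsto u+f(u,v)v$.
\begin{itemize}
\item For every $x$, $s^x$ acts as $-1$ on $P'=P.x$ and trivially on $P'^\perp$.
\item Writing $v=v_{P'}+v_\perp$, we get $f(v,v.s^x)=f(v_{P'}+v_\perp,-v_{P'}+v_\perp)=0$.
\item Hence $t$ and $t^{s^x}=t_{v.s^x}$ commute, and $[t,\,x^{-1}sx\,t\,x^{-1}sx]$ is a singular mixed identity of length $8$.
\end{itemize}
Since $s$ and $t$ have bounded rank, this identity passes to $\FSp(V,f)$ over any infinite locally finite field of odd characteristic.

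So you have not merely left the ``if'' direction unproved for (iii) in odd characteristic and for (iv) in good characteristic. The characterisation you invoke would ``prove'' that these groups have no mixed identity, which contradicts the statement.

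For the same reason, excluding $\FOmega(V,Q)$ over infinite $E$ with $p$ odd does not follow from ``the same reasoning'': each $\Omega_{2m+1}(E)$ does have bounded-length identities. You need the finer information on critical constants (Theorem~\ref{thm:gordeev_crit_consts}, Table~\ref{tab:small-elements}). Every mixed identity of $B_m$ has a nontrivial G-small semisimple critical constant, which up to conjugacy is $-1$ on a subspace of codimension one. A fixed finitary constant cannot have this shape in $\Omega_{2m+1}(E)$ for all large $m$.

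Once the characterisation is corrected to Theorem~\ref{thm:main_fin}, your reduction of the simply-laced, Steinberg and Suzuki--Ree cases to finite subgroups is fine.

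\paragraph{Problems with the existence constructions.} Your construction for (iii) in characteristic $2$ fails. Over an infinite locally finite field, two conjugate transvections $t_v,t_{v.x}$ are involutions, so they generate a dihedral group. The product $t_vt_{v.x}$ acts on $\langle v,v.x\rangle$ with trace $f(v,v.x)^2$, and this trace runs through all of $E$ as $x$ varies. Hence $[t,t^x]=(tt^x)^2$ has unbounded order, and no power of it is an identity. In characteristic $2$ you genuinely need the long/short root element identities.

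Over a finite field your bounded-exponent idea works. However, $\FOmega$ contains no transvections, so you should use any bounded-rank element instead, such as an Eichler transformation, as in \S\ref{subsubsec:fin_field_case}.

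\paragraph{The non-finitary exclusion.} This step rests on an unstated precise form of Meierfrankenfeld's theorem: that every nontrivial element of $H$ eventually has large \emph{projective} rank in the natural module. You must also treat $\PSU_{n_i}(q_i)$ with $q_i\to\infty$ separately, because $b=2q+1$ in \eqref{eq:ineq}. The paper avoids this altogether. It propagates a single critical constant of bounded norm through simplicity (Lemma~\ref{lem:propagation}) and then builds a finitary representation by ultraproducts (Proposition~\ref{prop:ultraproduct-finitary}).

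\paragraph{What works.} Your centraliser argument for singularity in (i)--(iii) is correct. It is also simpler than the paper's route via Theorem~\ref{thm:rank_bound}.
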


There are corresponding quantitative statements for nonabelian finite simple groups relating the field size and the rank to the length of a shortest mixed identity. Our second main result is as follows:

\begin{thmintro}\label{thm:main_finintro}
For a fixed possibly twisted Lie type ${}^a\mathbf{G}$, the simple groups $G={}^a\mathbf{G}(q)'$ admit
mixed identities of length bounded independently of $q$ if and only if
$a=1$ and $\mathbf{G}\in\set{B_n,C_n,F_4,G_2}$. For every remaining type 
$$
{}^a\mathbf{G}\in\set{A_n, D_n, E_6, E_7, E_8, 
{}^2A_n, {}^2D_n, {}^3D_4, {}^2E_6, {}^2B_2, {}^2F_4, {}^2G_2}
$$ 
the length of a shortest mixed identity for the group $G={}^a\mathbf{G}(q)'$ is of length $\Omega_{\mathbf{G}}(q^{1/a})$.
\end{thmintro}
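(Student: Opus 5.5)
The proof has two directions, and I would handle them separately.

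\emph{Existence for $B_n$, $C_n$, $F_4$, $G_2$.} The idea is that in characteristic $2$ (resp.\ $3$ for $G_2$) these groups have a special isogeny, and in odd characteristic one still has a short root element $u$ whose conjugacy class behaves almost "abelian". Concretely, for type $C_n$ I would take a long root element $u$ (a symplectic transvection). For any $g\in G$, the commutator $[u,u^g]$ lies in a small subgroup, since two transvections generate $\SL_2$ or a unipotent group. One then builds a word like
\[
w(x)=\bigl[[u,u^x],\,[u,u^x]^{u}\bigr]
\]
or a variant that is forced trivial by the rank-$2$ (or rank-$1$) structure of $\gensubgrp{u,u^x}$. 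For example, in $\SL_2(q)$ one can use the trace relation: the element $[u,u^x]$ has a fixed point pattern independent of $q$. For $B_n$, $F_4$, $G_2$ I would use the short-root analogue: in the non-simply-laced case, the subgroup generated by two conjugate root elements of the appropriate length is always contained in a subgroup of bounded type. Bounded length follows because the word depends only on the type, not on $q$. I would verify the identity is nontrivial in the free product $G*\mathbf{F}_1$, which is immediate for reduced words of this shape.

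\emph{Lower bound for the remaining types.} Let $w=c_0x^{\epsilon_1}c_1\cdots x^{\epsilon_\ell}c_\ell$ be a mixed identity, with multiple variables reduced to one via a standard substitution. I would embed $G$ into the algebraic group $\mathbf{G}(\overline{\mathbb{F}}_q)$ and view $w$ as a morphism $\mathbf{G}\to\mathbf{G}$. The argument has two steps.
\begin{enumerate}[(1)]
\item Show $w$ is not constant on $\mathbf{G}(\overline{\mathbb{F}}_q)$ for these types. This is the step where simply-lacedness or twisting enters. I would use Gordeev-type freeness results: restrict $x$ to a maximal torus or root subgroup $x=x_\alpha(t)$, so that $w$ becomes a polynomial map in $t$ of degree $O_{\mathbf{G}}(\ell)$. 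For simply-laced types all roots are conjugate under $W$, and I expect a genericity argument (choosing a conjugate of the torus in general position relative to the constants) to show the polynomial is nonconstant.
\item Apply a degree count. A nonconstant polynomial of degree $O(\ell)$ vanishes at most $O(\ell)$ points of $\mathbb{F}_q$, or of $\mathbb{F}_{q^{1/a}}$ in the twisted case, where the relevant root subgroup is parametrised by the smaller field. Vanishing on all of $G$ therefore forces $\ell=\Omega(q^{1/a})$.
\end{enumerate}

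For Suzuki and Ree groups, and for twisted $B_n$, $C_n$, $F_4$, $G_2$ (which in the list appear only as ${}^2B_2$, ${}^2F_4$, ${}^2G_2$), the twisting destroys the bounded identity. The same degree argument then applies with the fixed-point field of size $q^{1/a}$, using the root subgroups of the twisted group.

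The main obstacle is step (1): proving nonvanishing of $w$ on the algebraic group for every constant configuration in the simply-laced and twisted cases. The difficulty is that this must rule out exactly the mechanisms producing identities in $B$, $C$, $F$, $G$. I would first try induction on rank via Levi subgroups, reducing to $A_1$ and $A_2$, where explicit computation is possible.
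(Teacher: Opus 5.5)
Your existence construction for $B_n,C_n,F_4,G_2$ fails because the mechanism is wrong, not just the details. Two conjugate long root elements $u,u^x$ of $\Sp_{2n}(q)$ with non-perpendicular centres generate a copy of $\SL_2$ over a subfield that can be as large as $\mathbb{F}_q$. Over a transcendental extension they even generate $\mathbb{Z}/p*\mathbb{Z}/p$. So no fixed word in $u,u^x$, in particular not $[[u,u^x],[u,u^x]^u]$, vanishes for all $x$ once $q$ is large.

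More decisively, a bounded-length identity of $\mathbf{G}(q)$ is, by the Schwartz--Zippel transfer, an identity of $\mathbf{G}(\overline{\mathbb{F}}_q)$. Gordeev's criterion (Theorem~\ref{thm:gordeev_crit_consts}) then forces a \emph{noncentral G-small semisimple} critical constant when \eqref{eq:exc_cses} fails, and G-small unipotent critical constants of \emph{both} classes when it holds. A word whose only constants are $u^{\pm1}$ has no semisimple critical constant, and unipotent ones of at most one class.

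The missing idea is to pair a root element with such a semisimple element. For $C_n$ with $p$ odd, let $s'\in\Sp_{2n}(q)$ be any involution. Then $f(b,bs')=f(bs',bs'^2)=f(bs',b)=-f(b,bs')$, so $f(b,bs')=0$, and the transvection $t_b$ commutes with $t_b^{s'}=t_{bs'}$. Hence, with $s$ acting as $-1$ on one hyperbolic pair and $u$ a transvection, $[u,u^{x^{-1}sx}]$ is a mixed identity of length $8$ with critical constants $s$ and $u^{\pm1}$. The paper obtains all four types from Gordeev's construction (Theorem~\ref{thm:gordeev_short_ids}). It adds the check in Lemma~\ref{lem:small_elts_in_fin_grps} that the needed G-small semisimple elements lie in the simple group itself: spinor norm one for $\diag(-1,\dots,-1,1,-1,\dots,-1)$ in $B_n$, and Frobenius-stability of the order-$3$ class in $G_2$.

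For the lower bound, your two-step skeleton (transfer to $\mathbf{G}(\overline{\mathbb{F}}_q)$, then exclude identities there) is the paper's. But step (1), which you leave open, is the whole content, and the tools you suggest cannot supply it. Restricting $x$ to a torus or root subgroup, or inducting over Levi subgroups, says nothing about arbitrary constants, which need not normalise any Levi. The paper uses Tomanov's criterion (Theorem~\ref{thm:no_mxd_ids_qsmpl_alg_grps}): pass to a local field and use a semisimple element with a unique eigenvalue of largest and of smallest absolute value to force a T-small critical constant. It combines this with Gordeev's translation T-small $\Rightarrow$ G-small. In simply laced types every G-small semisimple element is central (Table~\ref{tab:small-elements}), hence trivial in the simple group, and the Steinberg types inherit this from their simply laced ambient group.

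For ${}^2B_2,{}^2F_4,{}^2G_2$ your step (1) is false for constants in the ambient group, since $\Sp_4$, $F_4$, $G_2$ in characteristic $2$, $3$ do have bounded identities. The argument must use that the constants lie in ${}^2\mathbf{G}(q)$. An identity of the ambient group needs G-small unipotent critical constants of both classes. The exceptional endomorphism $\delta$ swaps the long and short root classes, which are the two minimal nontrivial unipotent classes in the closure order, so ${}^2\mathbf{G}(q)$ contains no G-small unipotent element. ``The twisting destroys the bounded identity'' is the conclusion, not an argument.

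Step (2) also needs repair. First, $w$ can be constant on a fixed root subgroup without being constant on $\mathbf{G}$, and choosing a good translate inside the finite group already needs a Schwartz--Zippel estimate for that group. In the untwisted case a multivariate bound on the big cell would suffice. Second, twisted root subgroups are parametrised through $\lambda\mapsto\lambda^{q^{1/a}}$, or for Suzuki--Ree through $\lambda\mapsto\lambda^{2^{e+1}}$ (resp.\ $\lambda^{3^{e+1}}$), and there is no subfield of order $q^{1/2}$ there. The paper handles all of this at once with the Breuillard--Green--Guralnick--Tao bound (Proposition~\ref{BreGreGurTaoProp}).
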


In view of Theorems~\ref{thm:main} and \ref{thm:main_finintro}, the following two related conjectures seem natural.

\begin{conjsintro}\label{conj:main_lfsg}
The following hold:
\begin{enumerate}[(i)]
\item Every mixed identity of an infinite simple locally finite group is singular.
\item \cite{bradfordschneiderthom2023non}*{Conjecture~1}
The number of isomorphism classes of nonabelian finite simple groups satisfying a nonsingular mixed identity of fixed length is finite.
\end{enumerate}
\end{conjsintro}

Theorem~\ref{thm:main} establishes Conjecture~\ref{conj:main_lfsg}(i) for all  infinite simple locally finite groups apart from  groups of Lie type $B_n$, $C_n$, $F_4$, or $G_2$ over an infinite locally finite field.
For oligomorphic automorphism groups, Bodirsky and two of the authors formulated the analogous conjecture that every mixed identity is singular \cite{bodirskyschneiderthom2025mixed}. Recent work of Marimon and Pinsker proves this conclusion for every group admitting an action with no algebraicity \cite{marimonpinsker2026all}.

Conjecture~\ref{conj:main_lfsg}(ii) is motivated by the above mentioned result of Jones~\cite{jones1974varieties}. Similarly, Theorem~\ref{thm:main_finintro} proves Conjecture~\ref{conj:main_lfsg}(ii) apart from simple groups of the Lie types $B_n, C_n, F_4$, and $G_2$. Analogous quantitative questions concerning a shortest law in finite
simple groups of Lie type were initiated by Hadad 
\cite{hadad2011shortest} and developed further in 
\cites{kozmathom2016divisibility,thom2017length,bradfordthom2026short}.

In \cite{bradfordschneiderthom2023length} it was proven that any finite group that is 
not almost simple satisfies a mixed identity of length at most eight. 
Therefore, in studying mixed identities for finite groups, attention naturally focuses 
on the simple case.
Note that by the classification of finite simple groups and apart from finitely many sporadic groups, every nonabelian finite simple group is either alternating or of the form ${}^a\mathbf{G}(q)'\coloneqq {}^a\mathbf{G}(\mathbb F_q)'$ with ${}^a\mathbf G$ in the aforementioned list and $\mathbb F_q$ a finite field of size $q$ with some restrictions on $q$. Again, taking the derived subgroup is only relevant in the orthogonal cases. We have a conjectural description of those finite simple groups satisfying 
mixed identities of bounded length.

\begin{conjintro} \label{IntroFSGMainConj}
If $\mathcal{F}$ is a family of pairwise nonisomorphic nonabelian finite simple groups satisfying mixed identities of bounded length, then $\mathcal{F}$ contains a cofinite subfamily 
consisting of groups of the following types: 
\begin{enumerate}[(i)]
\item the alternating groups $\Alt_n$; 
\item groups of Lie type over fields of bounded size; 
\item $C_m(q)=\PSp_{2m}(q)$; 
$B_m(q)'=\POmega_{2m+1}(q)$; $F_4 (q)$; 
$G_2(q)$. 
\end{enumerate}
Conversely, groups of types (i), (ii) or (iii) 
do satisfy mixed identities of bounded length.
\end{conjintro}

Taken together with results from \cite{bradfordschneiderthom2023length} we 
can resolve Conjecture \ref{IntroFSGMainConj} with the possible exception of 
certain families of even-degree orthogonal groups.

\begin{thmintro} \label{thm:OrthCaseImpliesConj}
Conjecture \ref{IntroFSGMainConj} is satisfied by all families $\mathcal{F}$ 
which do not contain infinite sequences of finite simple orthogonal groups 
$\POmega_{2m_i}^\pm(q_i)$ for which $m_i$ and $q_i$ both tend to $\infty$.
\end{thmintro}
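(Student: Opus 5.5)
We argue by contradiction and pass to subsequences. Suppose $\mathcal{F}$ is an infinite family of pairwise nonisomorphic nonabelian finite simple groups satisfying mixed identities of length at most $L$, and suppose $\mathcal{F}$ contains no infinite sequence $\POmega^\pm_{2m_i}(q_i)$ with $m_i,q_i\to\infty$. We must show that all but finitely many members of $\mathcal{F}$ lie in (i), (ii) or (iii). Sporadic groups contribute only finitely many members, and alternating groups lie in (i), so we only need to consider groups of Lie type ${}^a\mathbf{G}(q)'$.

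If the claim failed, there would be an infinite subsequence of groups of Lie type outside (i)--(iii). For such a sequence the field sizes $q_i$ are unbounded, since otherwise the groups lie in (ii); passing to a further subsequence we may assume $q_i\to\infty$. Passing to yet another subsequence, we may assume either that the rank is bounded, so that the Lie type ${}^a\mathbf{G}$ is constant, or that the rank tends to infinity with the family of classical types fixed.

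\textbf{Bounded rank.} The type is fixed and is not among $B_n,C_n,F_4,G_2$ (untwisted), because otherwise the groups lie in (iii). By Theorem~\ref{thm:main_finintro}, the shortest mixed identity then has length $\Omega_{\mathbf{G}}(q^{1/a})\to\infty$, which contradicts the bound $L$.

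\textbf{Unbounded rank.} The classical families are $\PSL_n$, $\PSU_n$, $\PSp_{2m}$, $\POmega_{2m+1}$ and $\POmega^\pm_{2m}$. Symplectic and odd-dimensional orthogonal groups lie in (iii). Even-dimensional orthogonal groups with $m_i,q_i\to\infty$ are excluded by hypothesis. This leaves the linear and unitary groups with $n\to\infty$ and $q\to\infty$, which are the real obstacle. Here I would invoke the results of \cite{bradfordschneiderthom2023length}, which I expect to supply a lower bound on mixed-identity length for $\PSL_n(q)$ and $\PSU_n(q)$ that grows with $q$ uniformly in $n$; such a bound would again contradict $L$. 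If that paper only provides bounds of the form $\Omega(q)$ for $\PSL_n$, I would derive the unitary case by realising a large $\PSL_2(q)$ or $\PSU_3(q)$-type configuration inside $\PSU_n(q)$ on which the constants act generically.

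\textbf{Converse.} Groups of type (i) and (ii) satisfy bounded-length mixed identities by \cite{bradfordschneiderthom2023length}. Groups of type (iii) do so by Theorem~\ref{thm:main_finintro} in bounded rank. For unbounded rank in (iii), I expect to use the construction from the infinite symplectic and $B_n$ case, whose identities involve constants supported on a bounded-dimensional subspace. I expect the main difficulty to lie here, together with verifying the uniform-in-$n$ lower bound for the linear and unitary groups.
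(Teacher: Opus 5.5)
Your forward direction is the paper's own argument. You discard sporadics, $\Alt_n$ and bounded fields, pass to $q_i\to\infty$, and handle bounded rank via Theorem~\ref{thm:main_finintro}. In unbounded rank, symplectic and odd orthogonal groups lie in (iii), even orthogonal groups are excluded by hypothesis, and $\PSL_n(q)$, $\PSU_n(q)$ are ruled out by \cite{bradfordschneiderthom2023length}, Theorems~3 and~4, which give rank-uniform $\Omega(q)$ bounds as you expected. So the $\PSL_2/\PSU_3$ fallback is unnecessary.

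\textbf{The gap is in the converse for type (iii) in unbounded rank.} You plan to reuse ``the construction from the infinite symplectic and $B_n$ case'' with constants supported on a bounded-dimensional subspace. This works for $C_m$, since a sign change on one hyperbolic pair is G-small. It fails for $B_m(q)'=\Omega_{2m+1}(q)$ with $q$ odd, and families of these with $m,q\to\infty$ are not excluded by the hypothesis.

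There is no infinite $B_n$ construction to borrow: the paper shows that $\FOmega(V,Q)$ over an infinite locally finite field of odd characteristic has no mixed identity at all. The reason is Theorem~\ref{thm:main_fin}(ii) together with Table~\ref{tab:small-elements}. For fixed $m$ and $q$ large compared with the length, every mixed identity of $\Omega_{2m+1}(q)$ has a critical constant conjugate to $-1_{2m}\oplus 1$, whose $(-1)$-eigenspace is $2m$-dimensional. Hence constants of bounded support cannot give identities of bounded length along families in which $q$ grows fast enough relative to $m$.

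What rescues the converse is that $s=-1_{2m}\oplus 1=-r_e$, although of unbounded support, has projective rank one. The paper obtains (iii) from Theorem~\ref{thm:main_fin}(ii), i.e.\ Gordeev's identities built from a G-small semisimple element and a root element. Here is an explicit rank-independent choice. Take $s\in\Omega_{2m+1}(q)$ as in Lemma~\ref{lem:small_elts_in_fin_grps}(iii) and $t$ an Eichler transformation. Then $[(s^xt)^2,s^x]$ equals the length-eight word $[(i^xt)^2,i^x]$ of the appendix with $i=r_e$, because the scalar $-1$ occurs an even number of times. So it is a mixed identity of $\Omega_{2m+1}(q)$ with constants in the group and length $8$ for all $m\geq 2$ and odd $q$. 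For $q$ even, $B_m(q)\cong C_m(q)$.

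\textbf{A smaller point on (ii).} The paper uses \cite{bradfordschneiderthom2023length} only for $\Alt_n$ and $\PSL_n(q)$. For the other classical families over bounded fields it proves Proposition~\ref{BddFieldSizeProp}, and you need that argument or an equivalent. It takes a noncentral $c$ with $\rk(c-1)\leq d$ bounded (a transvection or Eichler transformation) and forms $v([c,x_1],\ldots,[c,x_r])$ for a law $v$ of $\SL_{2dr}(q)$.
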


In the more general setting of almost simple groups, it is not clear what behaviour to expect. As noted in \cite{bradfordschneiderthom2023length}*{Theorem~3} it can happen that a shortest mixed identity for a finite almost simple group 
can be shorter than a shortest mixed identity for its socle. Here we present an even starker example.

\begin{example}
Let $G_{\pm}\coloneqq{}^\bullet D_m(q)'=\POmega_{2m}^\pm(q)$ and $C_{\pm}\coloneqq\PGO_{2m}^\pm(q)$, so that $C_{\pm}$ is almost simple with socle $G_{\pm}$ of index at most four. We show in Theorem~\ref{thm:main_fin}(i) and (iii) that the shortest mixed for $G_{\pm}$ has length $\Omega_m(q)$, however for $q$ odd there is such an identity of length eight for $C_{\pm}$ (see the Appendix~\ref{sec:almost_simple_grps}). 
\end{example}

In the cases of variable rank, we can also control the length of nonsingular mixed identities:

\begin{thmintro} \label{thm:rank_bound}
Every nonsingular mixed identity of the simple groups $G=\Alt_n$ or $G={}^a \mathbf G(q)'$ for ${}^a \mathbf{G}\in\set{A_n, B_n, C_n, D_n, {}^2A_n, {}^2D_n}$, with $q$ a prime power, is of length $$\Omega\left(\frac{\log(n)}{\log\log(n)} \right).$$
\end{thmintro}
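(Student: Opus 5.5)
Let $w\in G*\mathbf F_r$ be a nonsingular mixed identity of length $\ell$, and let $\bar w\in\mathbf F_r$ be the word obtained by deleting all constants. Nonsingularity means $\bar w\neq 1$. The plan is to find, inside $G$, a subgroup $H$ that does not satisfy $\bar w$ and whose centralizer $C_G(H)$ is small compared with $H$. We then substitute values of the form $x_i = h_i\cdot(\text{something that commutes with the constants})$, or more directly we pass to an elementwise centralizer.

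\textbf{Centralizer trick.} Each constant $g_j$ of $w$ moves only a controlled part of the natural module (for the classical groups) or of $\{1,\dots,n\}$ (for $\Alt_n$). This holds up to a reduction in which we replace constants by elements of small support, which we expect to be the main obstacle; see below. Suppose there is a large subgroup $H\le G$ commuting with every constant. Restricting the variables to $H$ makes $w$ evaluate as $\bar w$ times a product of constants that telescopes. Concretely, if all $x_i\in H\le C_G(g_1,\dots,g_k)$, then $w(x)=c\cdot\bar w(x)$ with $c$ a fixed product of the constants. So the identity forces $\bar w$ to be constant on $H^r$, hence trivial, since $\bar w(1)=1$. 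Therefore $H$ satisfies the law $\bar w$, of length at most $\ell$.

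For $\Alt_n$, the constants fix pointwise a set of size at least $n-2\ell\cdot(\max\text{ support})$. For the classical groups, we use the common fixed space of the constants on $V$ and on a complement, which gives a subgroup of type $\SL_m(q)$ or a classical group of dimension $m$ inside the centralizer.

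\textbf{Lower bound on laws.} Known bounds on shortest laws (Kozma--Thom, Bradford--Thom) show that $\Alt_m$, and classical groups of dimension $m$ over $\mathbb F_q$, satisfy no law of length $o(\log m/\log\log m)$. The reason is that they contain elements of order $\lcm(1,\dots,m)$, or cyclic subgroups of large order, and a law of length $\ell$ must hold on such cyclic sections; a counting/divisibility argument then gives the bound. So if we can ensure $m\ge n^{c}$, or even $m\ge n/\ell^{O(1)}$, we get $\ell\gtrsim \log m/\log\log m$, which gives $\ell=\Omega(\log n/\log\log n)$.

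\textbf{Main obstacle: constants with large support.} Constants need not have small support, so no large $H$ need centralize them. For this I would first try the approach of \cite{bradfordschneiderthom2023length}: substitute $x_i=y_i u_i$, or conjugate the constants, so as to reduce to the case where each constant either has small support or acts "generically". Alternatively, I would choose $H$ inside a subgroup where the constants act as a bounded-length product. The concrete attempt is as follows. Take random elements of a large Young-type subgroup $\Alt_{m}^{t}$ (respectively a block-diagonal classical subgroup). Use that a word of length $\ell$ only involves the constants' action on at most $\ell$ blocks at a time. Then apply pigeonhole over block decompositions to find $m\ge n/\ell^{\ell}$ coordinates on which all constants act trivially modulo a product that can be absorbed. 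This yields $\log n \le O(\ell\log\ell)$, i.e. $\ell=\Omega(\log n/\log\log n)$. It matches the target bound exactly, which suggests this is the intended route.
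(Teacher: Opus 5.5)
There is a genuine gap. Your centralizer trick is correct as far as it goes: if $H$ commutes with all constants $c_0,\dots,c_l$, then on $H^r$ one has $w(h)=(c_0\cdots c_l)\,\varepsilon(w)(h)=\varepsilon(w)(h)$, so $H$ satisfies the nontrivial law $\varepsilon(w)$. Your stated reason for the law lower bound is off: divisibility on cyclic subgroups says nothing about laws lying in $[\mathbf F_r,\mathbf F_r]$. The needed fact is nevertheless easy, and stronger than you need. Completing a path of partial permutations shows that no nontrivial word of length $\ell$ is a law of $\Alt_m$ for $m\ge \ell+3$, and classical groups of dimension $m$ contain alternating groups of degree proportional to $m$.

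However, this only settles words all of whose constants, critical \emph{and} non-critical, have support (resp.\ projective rank) below roughly $n/\ell$. Everything else is deferred to your ``main obstacle'', and the sketch there does not work. The constants may have full support (resp.\ large projective rank) and may generate $G$. Then no point is fixed by all of them and their common centralizer is trivial. No choice of block decomposition or pigeonhole can produce the pointwise-fixed set your trick requires. The claim that a word ``only involves the constants' action on at most $\ell$ blocks at a time'' has no precise form that would give it. The agreement of $n\le \ell^{\ell}$ with the target bound is numerology, not an argument.

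The missing idea is the one the paper uses, and it has two parts.

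First, there is a genericity statement: Theorems~\ref{thm:orth_symp} and \ref{thm:untry_cse}, and the cited analogues for $\Alt_n$ and $\PSL_n$. One chooses the intermediate trajectory vectors $v_{i,j}^{\pm}$ one at a time by a counting argument (Warning's second theorem) and then applies Witt's lemma. This shows that if every \emph{critical} constant has norm at least $3d\ell+O(1)$, then the word map sends $d$ prescribed orthogonal isotropic vectors to essentially any other such $d$ vectors. The non-critical constants impose no condition at all. This yields \eqref{eq:ineq}.

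Second, deleting a shortest critical constant $c$ and reducing changes every value of the word map by a conjugate of $c$. Hence the image diameter changes by at most $2\norm{c}$, while the length drops by at least $2$ and the content is preserved. Iterate until the word is strict; it stays nontrivial by nonsingularity, and its critical length is $n$ by convention. Starting from $\diam(w(G^r))=0$, this gives $n\le b\,\ell(1+2\ell)^{\lfloor \ell/2\rfloor}$, whence $\ell=\Omega(\log n/\log\log n)$. In the unitary case the constant $b=2q+1$ is absorbed using the known bound $\ell=\Omega(q)$ for mixed identities of $\PSU_n(q^2)$.

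Your proposal has no substitute for either step. Nothing handles large critical constants, nothing handles large non-critical ones, and nothing interpolates between small and large constants within a single word.
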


We show that Conjecture~\ref{conj:main_lfsg}(ii) implies Conjecture \ref{conj:main_lfsg}(i); the converse implication is unclear to us. We also deduce the special case of both conjectures for strict mixed identities.

\begin{thmintro} \label{thm:strictsimpleintro}
The following hold:
\begin{enumerate}[(i)]
\item No infinite simple locally finite group has a strict mixed identity.
\item The number of isomorphism classes of nonabelian finite simple groups satisfying a strict mixed identity of fixed length is finite.
\end{enumerate}
\end{thmintro}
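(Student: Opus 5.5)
The plan is to prove both parts by combining the classification results stated above with Tomanov's theorem, which says that a linear group admitting a strict mixed identity is solvable-by-finite. The basic observation is this: a strict mixed identity $w=c_0x_{i_1}^{\epsilon_1}c_1\cdots x_{i_k}^{\epsilon_k}c_k$ becomes a nontrivial reduced word when its constants are deleted. In particular, a strict mixed identity is nonsingular. Also, the property of being strict depends only on the pattern $(i_1,\epsilon_1,\dots,i_k,\epsilon_k)$, not on the values of the constants.

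\emph{Part (i).} Let $G$ be an infinite simple locally finite group with a strict mixed identity $w$. Then $G$ has some mixed identity, so Theorem~\ref{thm:main} puts $G$ on the list (i)--(iv).
\begin{enumerate}[(a)]
\item In Cases (i)--(iii), Theorem~\ref{thm:main} says every mixed identity of $G$ is singular. This contradicts the nonsingularity of $w$.
\item In Case (iv), $G$ is a simple group of Lie type over an infinite locally finite field. So $G$ is linear and infinite simple, hence not solvable-by-finite. Tomanov's theorem then rules out a strict mixed identity.
\end{enumerate}

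\emph{Part (ii).} Suppose, for contradiction, that infinitely many pairwise nonisomorphic nonabelian finite simple groups $G_j$ satisfy strict mixed identities of length at most $L$. By the classification we may discard the finitely many sporadic groups. Since strict identities are nonsingular, Theorem~\ref{thm:rank_bound} bounds $n$ for alternating groups and bounds the rank for classical groups. This leaves finitely many alternating groups and groups of Lie type of bounded rank. Passing to a subsequence, we may assume:
\begin{itemize}
\item all $G_j$ are of one fixed type ${}^a\mathbf{G}(q_j)'$ with $q_j\to\infty$;
\item all the identities $w_j$ share one fixed pattern of variables and exponents.
\end{itemize}
By Theorem~\ref{thm:main_finintro}, only the types $B_n,C_n,F_4,G_2$ remain, but the argument below works uniformly.

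Now take a nonprincipal ultraproduct $\prod_{\mathcal U}G_j$. For fixed type, this group is isomorphic to the corresponding twisted group ${}^a\mathbf{G}(K)'$ over the pseudofinite ultraproduct field $K$. The reason is that these groups are uniformly definable in the fields via a fixed linear representation of bounded degree. Hence the ultraproduct is a linear group. It satisfies the mixed identity whose constants are the classes of the constants of the $w_j$; by Łoś's theorem, this is valid because the length and pattern are fixed. The resulting word has the same fixed pattern, so it is still strict. But ${}^a\mathbf{G}(K)'$ is infinite simple, hence not solvable-by-finite, contradicting Tomanov's theorem.

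\emph{Main obstacle.} The step I expect to need the most care is identifying the ultraproduct with a linear group, including the twisted cases, where the field automorphism must also be handled uniformly. If that identification is awkward, the fallback is to embed each $G_j$ into $\GL_N(\overline{\mathbb F}_{p_j})$ with $N$ fixed. Then the ultraproduct embeds into $\GL_N$ of an ultraproduct of fields, and so is linear. It remains to check that it is not solvable-by-finite, which follows from simplicity of the ultraproduct, a first-order property of bounded-width commutator generation in the $G_j$.
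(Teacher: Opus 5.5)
Your part (i) is the paper's argument. For nonlinear $G$, a strict word is nonsingular, so Theorem~\ref{thm:main_slfg}(i) applies. The linear groups in case (iv) of Theorem~\ref{thm:main} are handled by Tomanov's theorem.

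For part (ii), the first step is shared with the paper: Theorem~\ref{thm:rank_bound} reduces to one fixed Lie type of bounded rank with $q_j\to\infty$. After that you argue differently.
\begin{itemize}
\item The paper concludes directly from Theorem~\ref{thm:main_fin}. Outside the untwisted types $B_n,C_n,F_4,G_2$, every mixed identity has length growing with $q$. For those four types, a mixed identity of length $O_{\mathbf G}(q)$ must have G-small critical constants, whereas a strict word has no critical constants at all. Behind this lie the Schwartz--Zippel transfer to $\mathbf{G}(\overline{\mathbb{F}}_q)$ and Gordeev's analysis of critical constants.
\item You instead pass to $\prod_{\mathcal U}G_j\le\GL_N(\prod_{\mathcal U}K_j)$; your fallback embedding suffices, and the identification with ${}^a\mathbf{G}(K)'$ is not needed. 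The word with ultraproduct constants is still strict, since strictness depends only on the pattern and reducedness is vacuous for strict words. It is an identity by coordinatewise evaluation, so {\L}o{\'s}'s theorem is not even required there. You then apply Tomanov's theorem to this linear group.
\end{itemize}
Your route is soft and uniform in the type, and works with any number of variables. It needs only Tomanov's theorem as a black box, over a field that need not be locally finite and may have characteristic $0$; this is covered by the general form in which the paper quotes it. The paper's route is quantitative, giving an explicit bound on $q$ in terms of the length, and it reuses the paper's main finite-group theorem.

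One step needs tightening. Bounded \emph{commutator} width only makes the ultraproduct perfect. Infinite perfect solvable-by-finite groups exist, so perfectness alone gives no contradiction with Tomanov. There are two ways to fix this.
\begin{itemize}
\item Use uniformly bounded covering numbers of nontrivial conjugacy classes in bounded rank (Liebeck--Shalev). This property is first order and makes the ultraproduct simple.
\item More cheaply, suppose $\prod_{\mathcal U}G_j$ had a normal solvable subgroup of derived length $d$ and index $m$. Then it would satisfy the nontrivial law $\delta_d(x_1^m,\ldots,x_{2^d}^m)$, where $\delta_d$ is the $d$-fold iterated commutator word. By {\L}o{\'s}, this law would hold in infinitely many of the pairwise nonisomorphic $G_j$, contradicting Jones's theorem.
\end{itemize}
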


\medskip

In various particular cases we prove more refined results not stated in Theorem~\ref{thm:main} and \ref{thm:main_finintro}. Those will be stated in the next section after the necessary terminology has been introduced rigorously. In fact Theorems \ref{thm:main} and \ref{thm:main_finintro} are immediate consequences of the more refined Theorems \ref{thm:main_slfg} and \ref{thm:main_fin} in the next section, and the classification given in Theorem~\ref{thm:class_sing_mxd_id}.

\medskip

The paper is organized as follows: Section~\ref{sec:def} contains definitions and more refined statements of our results (see Theorems~\ref{thm:main_slfg} and \ref{thm:main_fin} 
for the results on infinite locally finite and finite groups, respectively). We establish Theorem~\ref{thm:OrthCaseImpliesConj} and Theorem~\ref{thm:part_conj_jones}, 
which establishes a part of Conjecture~\ref{conj:main_lfsg}(ii). In Section~\ref{sec:classical_grps_of_lie_type} we concentrate on classical groups of Lie type and prove Theorem~\ref{thm:rank_bound} and Theorem~\ref{thm:strictsimpleintro}(ii) (applying Theorem~\ref{thm:main_fin}).  In Section~\ref{sec:alg_grps} we establish Theorem~\ref{thm:main_slfg} and Theorem~\ref{thm:main_fin}. The statements on finite simple groups in Theorem~\ref{thm:main_fin} are proven by tracing back the case of a (twisted) finite simple group of Lie type to the case of an ambient algebraic group over the infinite field $\overline{\mathbb{F}}_q$ using a Schwartz--Zippel bound. Moreover, we remark (see Corollary~\ref{cor:comp_lie_grps}) that, by the results of Gordeev~\cite{gordeev1997freedom}, no centreless simple compact Lie group admits a mixed identity, proving a conjecture of Larsen--Shalev \cite{larsenshalev2022identities}.
In Section~\ref{sec:nonlin_simple_loc_fin_grps} we use \emph{Kegel covers} (see Definition~\ref{def:sec_cov} below) to prove Theorem~\ref{thm:main_slfg}(i) and Theorem~\ref{thm:class_sing_mxd_id}, classifying all nonlinear simple locally finite groups admitting a mixed identity. We also prove that Conjecture~\ref{conj:main_lfsg}(ii) implies Conjecture~\ref{conj:main_lfsg}(i).

\section{Definitions}\label{sec:def}

We now introduce the terminology of mixed identities used to state our results more formally.
Let $G\leq C$ be groups. Let $w\in C\ast\mathbf{F}_r$ be a \emph{word with constants} in $C$, which we call the group of constants. Here $\mathbf{F}_r=\gensubgrp{x_1,\ldots,x_r}$ denotes the \emph{free group} with basis $\set{x_1,\ldots,x_r}$. Subsequently, we fix $w$ to be of the shape
$$
w=c_0x_{i(1)}^{\varepsilon(1)}c_1\cdots c_{l-1}x_{i(l)}^{\varepsilon(l)}c_l,
$$
where $i(j)=i(j+1)$ and $\varepsilon(j)=-\varepsilon(j+1)$ implies that $c_j\neq 1_C$, i.e.\ the representation of $w$ is \emph{reduced}. Here the $c_j$ belong to $C$ and $\varepsilon(j)\in\set{\pm1}$ (for all $j$). We define the \emph{length} of $w$ to be $\norm{w}\coloneqq l$. Moreover, we define the sets 
\begin{align*}
J_0(w)&\coloneqq\set{j\in\set{1,\ldots,l-1}}[i(j)\neq i(j+1)];\\ 
J_+(w)&\coloneqq\set{j\in\set{1,\ldots,l-1}}[i(j)=i(j+1)\text{ and }\varepsilon(j)=\varepsilon(j+1)];\\
J_-(w)&\coloneqq\set{j\in\set{1,\ldots,l-1}}[i(j)=i(j+1)\text{ and }\varepsilon(j)=-\varepsilon(j+1)].
\end{align*}
These will be used a lot in the text. The $c_j$ ($j\in J_-(w)$) are called \emph{critical constants} and we collect them in the set $I(w)\coloneqq\set{c_j}[j\in J_-(w)]$. Let $\Lambda\subseteq C$ be a `small' subset. The word $w$ is called \emph{$\Lambda$-noncentral} (according to \cite{golubchikmikhalev1982generalized}) if every critical constant does not belong to $\Lambda$, i.e.\ $\Lambda\cap I(w)=\varnothing$. So $w$ is an arbitrary (reduced) word when $\Lambda=\mathbf{1}$. When $\Lambda=C$, it follows that $J_-(w)=\varnothing$. In this case, we call $w$ \emph{strict} (according to \cite{tomanov1985generalized}).
We define $\norm{w}_{\rm crit}\coloneqq\min\set{\norm{c}}[c\in I(w)]$ to be the \emph{critical length} of $w$. Here $\norm{\bullet}\colon C\to\mathbb{N}$ denotes a \emph{seminorm} on $C$ which is understood. We set $\norm{w}_{\rm crit}\coloneqq\diam(C)$ for strict words $w$, where $\diam(C)\coloneqq\sup\set{\norm{c}}[c\in C]\in\mathbb{N}\cup\set{\infty}$ is the \emph{diameter} of the group $C$.
	
Write $\varepsilon\colon C\ast\mathbf{F}_r\to\mathbf{F}_r$ for the \emph{augmentation homomorphism} that sends $C\ni c\mapsto 1_{\mathbf{F}_r}$ and fixes each element of $\mathbf{F}_r$. Call 
$$
\varepsilon(w)=x_{i(1)}^{\varepsilon(1)}\cdots x_{i(l)}^{\varepsilon(l)}\in\mathbf{F}_r
$$ 
the \emph{content} of $w$ (here cancellation may occur). The word $w$ is called \emph{singular} if it has trivial content and \emph{nonsingular} otherwise. Moreover, $w$ is called a \emph{mixed identity} for $G$ \emph{with constants} in $C$, if $w\neq 1_{C\ast\mathbf{F}_r}$ and the associated \emph{word map} $w\colon G^r\to C$; $(g_1,\ldots,g_r)\mapsto w(g_1,\ldots,g_r)$, which is defined by substituting the variable $x_i$ for $g_i\in G$ (for all $i\in\set{1,\ldots,r}$), has trivial image, i.e.\ $w(G^r)=\set{1_C}$. When $G=C$, $w$ is just called a mixed identity for $G$.

\medskip
	
The purpose of this article is to prove Theorems \ref{thm:main_slfg} 
and \ref{thm:main_fin} below and thereby, 
to establish parts of Conjectures \ref{conj:main_lfsg}(i) and (ii),
and \ref{IntroFSGMainConj}. 
Theorem~\ref{thm:main_slfg} deals with the case that $G$ is an infinite simple locally finite group, whereas Theorem~\ref{thm:main_fin} focuses on the case when $G$ is a finite simple group. To state both theorems, we need to fix the following exceptional condition:
\begin{equation}\label{eq:exc_cses}
p=2\text{ and } \mathbf{G}=B_n,C_n,F_4;\text{ or }p=3\text{ and }\mathbf{G}=G_2.\tag{$\star$}
\end{equation}
Also note that, when $w\in C\ast\mathbf{F}_r$ is a shortest mixed identity for $G$ (with constants in $C$), there is a mixed identity $w'\in C\ast\gensubgrp{x}$ of the same length by \cite{bradfordschneiderthom2023length}*{Lemma~2.2}. Hence we may restrict to the case that $r=1$ in most cases.
    
\begin{theorem}\label{thm:main_slfg}
Let $G$ be an infinite simple locally finite group.
\begin{enumerate}[(i)]
\item If $G$ is nonlinear, there exists no nonsingular mixed identity for $G$. We classify all cases in which $G$ has a singular mixed identity.
\item When $G=\mathbf{G}(E)'$ is linear (for an infinite locally finite field $E\leq\overline{\mathbb{F}}_q$) and $\mathbf{G}$ is \emph{simply laced}, i.e.\ it is one of $A_n,D_n,E_6,E_7,E_8$, then $G$ has no mixed identities $w\in\mathbf{G}(\overline{\mathbb{F}}_q)\ast\gensubgrp{x}$. 
\item If $\mathbf{G}$ is not simply laced, i.e.\ one of $B_n,C_n,F_4, G_2$, then there is a singular mixed identity for $G=\mathbf{G}(E)'$ with constants in itself; see Theorem~\ref{thm:gordeev_short_ids}. If Condition~\eqref{eq:exc_cses} fails, every mixed identity $w\in\mathbf{G}(\overline{\mathbb{F}}_q)\ast\mathbf{F}_r$ for $G$ has both G-small semisimple and G-small almost unipotent critical constants. If \eqref{eq:exc_cses} holds, then there are G-small almost unipotent critical constants of both classes, see  Definition~\ref{def:gord_smll} and \ref{def:sml_upts} for G-small semisimple and almost unipotent elements.
\item In the twisted case, when $G={}^a\mathbf{G}(E)'$ for
$$
{}^a\mathbf{G}(E)\in\set{{}^2A_n(E),{}^2D_n(E),{}^3D_4(E),{}^2E_6(E)}
$$ 
is a generalized Steinberg group over an infinite locally finite field $E$ with an automorphism $\alpha\in\Aut(E)$ of order $a\in\set{2,3}$, then $G$ also has no mixed identities $w\in\mathbf{G}(\overline{\mathbb{F}}_q)\ast\gensubgrp{x}$.
\item Finally, when $G={}^2\mathbf{G}(E)\in\set{{}^2B_2(E),{}^2F_4(E),{}^2G_2(E)}$ is a generalized Suzuki or Ree group over a suitable infinite locally finite field $E$, then $G$ has no mixed identities (with constants in $G$ itself).
\end{enumerate}     
\end{theorem}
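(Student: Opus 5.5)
Theorem~\ref{thm:main_slfg} bundles five statements of different nature, so I will treat them separately and establish them in the order (ii), (iv), (v), (iii), (i).

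\textbf{Linear cases (ii), (iv), (v).} The common strategy is to pass from the locally finite group $G=\mathbf{G}(E)'$ to the algebraic group $\mathbf{G}(\overline{\mathbb{F}}_q)$.
\begin{enumerate}[(a)]
\item Since $E$ is infinite, $G$ is Zariski dense in $\mathbf{G}(\overline{\mathbb{F}}_q)$. In the twisted cases it is dense as the fixed points of a Steinberg endomorphism over an infinite field. Hence a mixed identity $w\in\mathbf{G}(\overline{\mathbb{F}}_q)\ast\gensubgrp{x}$ for $G$ is automatically a mixed identity for the algebraic group $\mathbf{G}(\overline{\mathbb{F}}_q)$.
\item It therefore suffices to show that simply laced simple algebraic groups over $\overline{\mathbb{F}}_q$ admit no mixed identity. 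For this I would invoke Gordeev's theorem~\cite{gordeev1997freedom}: mixed identities of a simple algebraic group force critical constants to be G-small. I would then check from Definition~\ref{def:gord_smll} and Definition~\ref{def:sml_upts} that the sets of G-small semisimple and G-small almost unipotent elements are central exactly when $\mathbf{G}$ is simply laced. The reason is that the small classes come from short-root subgroups and short-root-type semisimple elements.
\item Since $w$ is reduced, a word all of whose critical constants are central collapses to a word that is strict after absorbing those constants. Tomanov's theorem~\cite{tomanov1985generalized} then excludes such a word, because $\mathbf{G}(\overline{\mathbb{F}}_q)$ is not solvable-by-finite.
\item The twisted case (iv) is identical, since $\mathbf{G}$ is simply laced there as well.
\item For the Suzuki and Ree groups (v) the ambient $\mathbf{G}$ is $B_2$, $F_4$ or $G_2$ in characteristic $2$ or $3$, which is condition~\eqref{eq:exc_cses}. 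Here small classes do exist in the algebraic group. I would use that the graph endomorphism interchanges long and short roots. A mixed identity with constants in $G$ would have images under the Steinberg endomorphism that are again mixed identities. This forces critical constants that are simultaneously small of both root lengths, and only central elements satisfy that, so they are trivial.
\end{enumerate}

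\textbf{Non-simply-laced case (iii).} Existence follows from the explicit short identities of Theorem~\ref{thm:gordeev_short_ids}. These are built from commutator relations of short root elements, for example $[x u x^{-1}, u']=1$ for suitable long and short root elements. They are singular by construction. The necessity statements are exactly Gordeev's classification of critical constants, transferred to $G$ via step (a).

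\textbf{Nonlinear case (i).} By Hall's classification (Theorem~\ref{thm:hall}) and the Kegel-cover theory (Definition~\ref{def:sec_cov}), I would write $G$ as a union of finite sections $H_i/N_i$. These are either alternating groups of unbounded degree or classical groups of unbounded rank, and a fixed word $w$ descends to a mixed identity of bounded length in them.
\begin{enumerate}[(a)]
\item To exclude nonsingular identities, use Theorem~\ref{thm:rank_bound}: a nonsingular identity of these sections has length $\Omega(\log n/\log\log n)$, which is unbounded.
\item For the classification of singular identities, the diagonal-type nonlinear groups would be treated by exhibiting, for each constant, conjugates commuting or not.
\item In $\Alt(\Omega)$ and the finitary classical groups, identities of the form $[c,xc'x^{-1}]$ with $c,c'$ of small finitary support exist precisely when the field is finite, or in the symplectic case where transvections give short identities.
\end{enumerate}

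\textbf{Main obstacle.} I expect the hardest step to be the Kegel-cover reduction in (i). The difficulty is ensuring that the constants survive nontrivially in the sections and that the word stays reduced in them. I would first try to choose a cover in which all constants of $w$ lie in $H_i$ and avoid $N_i$.
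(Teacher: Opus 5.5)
Your treatment of (ii)--(v) follows the paper's route: Zariski density, then Theorem~\ref{thm:gordeev_crit_consts}, and for (v) the fact that $\delta$ fixes every constant while interchanging the long and short root classes, so no $\delta$-fixed unipotent is G-small. There are two slips there. First, step (c) of (ii) is unnecessary and false as stated: absorbing the central $z$ in $xa\,xzx^{-1}\,bx^{-1}$ produces $x\,azb\,x^{-1}$, which is not strict. In the adjoint group the centre is trivial and critical constants of a reduced word are nontrivial, so Theorem~\ref{thm:gordeev_crit_consts}(i) finishes directly. Second, $[xux^{-1},u']$ in (iii) is not an identity (see below); existence with constants in $G$ rests on Lemma~\ref{lem:small_elts_in_fin_grps}.

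The genuine gap is the singular half of (i). Hall's Theorem~\ref{thm:hall} only applies to finitary groups, and a nonlinear simple locally finite group need not be finitary (e.g.\ Hall's universal group). ``Exhibiting conjugates commuting or not'' does not fill this. The missing step is the reduction to the finitary case, which runs as follows. Inequality~\eqref{eq:ineq}, from Theorems~\ref{thm:orth_symp}, \ref{thm:untry_cse} and their alternating and special linear analogues, bounds the Hamming resp.\ projective-rank norm of one fixed critical constant in cofinally many Kegel factors. Simplicity then propagates such a bound to every element of $G$ (Lemma~\ref{lem:propagation}). An ultraproduct of the natural modules then embeds $G$ into $\FGL(V)$ (Proposition~\ref{prop:ultraproduct-finitary}, Lemma~\ref{lem:PFGL-FGL}), with $\PSU_\infty(\infty)$ excluded separately by the $\Omega(q)$ bound. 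Likewise, in the nonsingular half the unbounded rank of the Kegel factors comes from Lemma~\ref{lem:red_one_ltype} and Mal'cev's theorem (Lemma~\ref{lem:lin_lfsg}), not from Hall. The descent you call the main obstacle is immediate from Definition~\ref{def:sec_cov}(iii) applied to the subgroup generated by the constants (Lemma~\ref{lem:inf_loc_smpl_grp_non_trv_id}).

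Within the finitary cases your constructions and exclusions also fail. A word $[c,xc'x^{-1}]$ with $c,c'\neq 1$ is never a mixed identity of a simple group, since otherwise $c$ would centralize the normal closure of $c'$, which is $G$. The paper instead uses $[\sigma,x]^{30}$ for $\Alt(\Omega)$. Over a finite field it uses $v([c,x_1],\ldots,[c,x_r])$ with $c$ of bounded rank and $v$ a law of a finite linear group of bounded degree; this is exactly where finiteness of $E$ enters.

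Over an infinite field, transvections alone give no identity of $\FSp(V,f)$. For $p$ odd, Theorem~\ref{thm:gordeev_crit_consts}(ii) also demands a G-small semisimple critical constant, and for $p=2$ short root elements are needed too. What makes the symplectic case work is that the element acting as $-1$ on one hyperbolic pair and trivially elsewhere is G-small in every $\Sp_{2n}(E)$, so Gordeev's identities survive the direct limit.

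Finally, your ``precisely'' needs an argument. $\FSU$ and $\SL(V,W,m)$ over infinite fields are excluded by applying (ii) and (iv) to finite-rank subgroups containing the constants. For $\FOmega$ the odd-dimensional pieces $\Omega_{2m+1}(E)$ are of type $B_m$ and do have identities in each rank. Here the paper observes that the required G-small semisimple critical constant would be conjugate to $-1_{2m}\oplus 1$, whose rank grows with $m$, which is impossible for a fixed finitary constant. This is what separates $\FSp$ from $\FOmega$ in Theorem~\ref{thm:class_sing_mxd_id}.
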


\begin{remark}
The classification of groups $G$ as in (i) having a singular mixed identity is given in Theorem~\ref{thm:class_sing_mxd_id} below. 
\end{remark}

\begin{remark}
Note that, in contrast to (iv), in (v) there are mixed identities with constants in the ambient algebraic group, which is why we considered the two cases separately. Indeed, e.g.\ $\Sz(E)={}^2B_2(E)$ sits inside $\mathbf{G}(\overline{\mathbb{F}}_q)=\Sp_4(\overline{\mathbb{F}}_q)$ which has singular mixed identities by Theorem~\ref{thm:main_slfg}(iii). These carry over to $\Sz(E)$. 
\end{remark}

In the next theorem, we consider finite simple groups of Lie type and prove bounds on a shortest mixed identity for every such group. We will make frequent use of the Landau notation $O$, $\Omega$, and $\Theta$. Recall that $f(n)=\Omega(g(n))$ resp.\ $f(n)=O(g(n))$ if and only if there is $N\in\mathbb{N}$ and a constant $C\in\mathbb{R}_+$ with $f(n)\geq Cg(n)$ resp.\ $f(n)\leq Cg(n)$ for all $n\geq N$.
Moreover, $f(n)=\Theta(g(n))$ if and only if $f(n)=O(g(n))=\Omega(g(n))$.

\begin{theorem}\label{thm:main_fin}
Let $G$ be a finite simple group of Lie type.
\begin{enumerate}[(i)]
\item Let $\mathbf{G}$ be one of the simply laced Lie types $A_n,D_n,E_6,E_7,E_8$.
Then a shortest mixed identity $w\in\mathbf{G}(\overline{\mathbb{F}}_q)\ast\gensubgrp{x}$ for $G=\mathbf{G}(q)'$ is of length $\Omega_{\mathbf{G}}(q)$.
\item If $\mathbf{G}$ is not simply laced, i.e.\ one of $B_n,C_n,F_4, G_2$, 
then there exists a singular mixed identity for $G=\mathbf{G}(q)'$ with constants in $G$ of bounded length. Let $w\in\mathbf{G}(\overline{\mathbb{F}}_q)\ast\mathbf{F}_r$ be a mixed identity for $G$ and assume that $l=\norm{w}=O_{\mathbf{G}}(q)$ for a small enough implicit constant. Then, if \eqref{eq:exc_cses} fails, $w$ contains G-small semisimple and G-small almost unipotent critical constants. When \eqref{eq:exc_cses} holds, there are G-small almost unipotent critical constants of both classes.
\item When $G={}^a\mathbf{G}(q^a)'$ is a Steinberg group ${}^2A_n(q^2),{}^2D_n(q^2),{}^3D_4'(q^3),{}^2E_6(q^2)$, a shortest mixed identity $w\in\mathbf{G}(\overline{\mathbb{F}}_q)\ast\gensubgrp{x}$ for $G$ is of length $\Omega_{\mathbf{G}}(q)$.
\item If $G = {}^2\mathbf{G}(q)$ is a Suzuki group ${}^2B_2(q)$, or a Ree group ${}^2F_4(q)$ or ${}^2G_2(q)$, with $q=2^{2e+1}$ for ${}^2B_2$ and ${}^2F_4$, and $q=3^{2e+1}$ for ${}^2G_2$, then any mixed identity of $G$ (with constants in $G$) is of length $\Omega(q^{1/2})$.
\end{enumerate}
\end{theorem}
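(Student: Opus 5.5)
The plan is to reduce each finite statement to a statement about the ambient algebraic group $\mathbf{G}(\overline{\mathbb{F}}_q)$ and then pass back to the finite group through a Schwartz--Zippel type count, as announced in the introduction. Fix a word $w\in\mathbf{G}(\overline{\mathbb{F}}_q)\ast\gensubgrp{x}$ of length $l$. The word map $g\mapsto w(g)$ is a morphism of varieties $\mathbf{G}\to\mathbf{G}$. After embedding $\mathbf{G}\hookrightarrow\GL_N$ by a fixed representation depending only on $\mathbf{G}$, each matrix entry of $w(g)-1$ is a polynomial in the coordinates of $g$. Since $g^{-1}$ is the adjugate divided by a unit, its degree is $O_{\mathbf{G}}(l)$. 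Given a rational point, one would like to know that $G=\mathbf{G}(q)'$ (respectively ${}^a\mathbf{G}(q^a)'$, respectively ${}^2\mathbf{G}(q)$) is ``Zariski dense enough'' that such a polynomial cannot vanish on all of it once its degree is below $c\,q$ (respectively $c\,q^{1/2}$).

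Concretely, I would parametrise a dense open subset of $\mathbf{G}$ by a big cell $U^-TU$. The root subgroups $x_\alpha(t)$ are polynomial in $t$, and the torus is parametrised by Laurent monomials. Composing $w$ with this parametrisation gives polynomials in $\dim\mathbf{G}$ variables of degree $O_{\mathbf{G}}(l)$ in each variable. The rational points of the finite group contain the image of $\mathbb{F}_q$-points of the unipotent parameters. For the twisted groups these are $\mathbb{F}_{q}$- or $\mathbb{F}_{q^a}$-points in suitable coordinates, obtained by choosing a Frobenius-stable basis of the twisted root groups. For Suzuki and Ree groups the root-group parametrisation involves $t^{\theta}$ with $\theta^2=p$ times Frobenius, which is what costs the square root and yields $\Omega(q^{1/2})$ in (iv). The Schwartz--Zippel lemma then says that if $w$ vanished on $G$ and $l<c_{\mathbf{G}}q$ (respectively $q^{1/2}$), all these polynomials would vanish identically. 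Hence $w$ would be a mixed identity of the algebraic group $\mathbf{G}(\overline{\mathbb{F}}_q)$, or of the relevant closed subvariety in the Suzuki--Ree case.

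The second half is the algebraic statement. For (i) and (iii) I would invoke the results of Gordeev~\cite{gordeev1997freedom}, used in Theorem~\ref{thm:main_slfg}(ii) and (iv): a simply laced simple algebraic group over $\overline{\mathbb{F}}_q$ admits no mixed identity. For (iii), the Steinberg groups are Zariski dense in $\mathbf{G}(\overline{\mathbb{F}}_q)$, so the same conclusion applies. For (iv), Suzuki and Ree groups are dense only for a twisted structure. I would rerun the argument of Theorem~\ref{thm:main_slfg}(v), with constants in $G$, and verify that the argument there only uses finitely many root-group substitutions of bounded degree in $t$ and $t^\theta$.

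For (ii), the existence of a bounded-length singular identity comes from Theorem~\ref{thm:gordeev_short_ids}, since its constants lie in $\mathbf{G}(\mathbb{F}_p)'\le G$. For the critical-constant statement, Schwartz--Zippel again upgrades $w$ to a mixed identity of $\mathbf{G}(\overline{\mathbb{F}}_q)$, and Theorem~\ref{thm:main_slfg}(iii) supplies the G-small semisimple and almost unipotent critical constants, split according to whether \eqref{eq:exc_cses} holds.

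The main obstacle is the uniformity in the Schwartz--Zippel step. I need degree bounds of the form $O_{\mathbf{G}}(l)$ per variable, which requires handling inverses and the constants $c_j$ without introducing denominators. I also need to show that the rational points contain a full product grid $S_1\times\cdots\times S_d$ with $\card{S_i}\ge q$ (or $q^{1/2}$ in the Suzuki--Ree case) in the chosen coordinates. I would first try products of root subgroups in a fixed order, which avoids the torus entirely.
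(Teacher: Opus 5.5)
For (i)--(iii) your plan is the paper's. A word of length $O_{\mathbf{G}}(q)$ that vanishes on $G$ is forced by a Schwartz--Zippel count to be a mixed identity of $\mathbf{G}(\overline{\mathbb{F}}_q)$, and Gordeev's results (Theorem~\ref{thm:gordeev_crit_consts}, Remark~\ref{rmk:smply_lced}, Theorem~\ref{thm:gordeev_short_ids}) then finish. The paper gets the count from Proposition~\ref{BreGreGurTaoProp}, or in the appendix from a B\'ezout point count on $\mathbf{G}$ or on the Weil restriction $V_{\overline{\mathbb{F}}_q}(\mathcal{Q}')$. Your product grid in root-group coordinates is a viable, more elementary substitute for the untwisted case. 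It also works for Steinberg groups once you use $\mathbb{F}_q$-coordinates on twisted root groups, provided you check that the extended parametrisation is dominant onto $\mathbf{G}$.

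\textbf{The genuine gap is (iv).} There the ambient group is $B_2$, $F_4$ or $G_2$ in characteristic $2$, $2$, $3$, which is exactly Condition~\eqref{eq:exc_cses}. In that case $\mathbf{G}(\overline{\mathbb{F}}_q)$ \emph{does} have bounded-length mixed identities (Theorem~\ref{thm:gordeev_short_ids}), so reducing to the algebraic group proves nothing by itself. Theorem~\ref{thm:gordeev_crit_consts}(iii) only says that $w$ must contain G-small unipotent critical constants of both classes. The content of (iv) is that no element of $G=\fix(\delta)$ is G-small unipotent. The paper proves this with the closure order on unipotent classes taken from Spaltenstein's tables:
\begin{enumerate}[(a)]
\item there is a unique long-root class $C_{\rm l}$ and a unique short-root class $C_{\rm s}$, both immediately above the trivial class;
\item every other nontrivial class lies above $C_{\rm l}$ or above $C_{\rm s}$;
\item $\delta$ interchanges $C_{\rm l}$ and $C_{\rm s}$ but fixes the class of any $u\in G$.
\end{enumerate}
Hence every nontrivial unipotent $u\in G$ has both $C_{\rm l}$ and $C_{\rm s}$ in its closure, so it is not G-small. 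Your plan to ``rerun the argument of Theorem~\ref{thm:main_slfg}(v)'' by checking root-group substitutions in $t$ and $t^\theta$ does not contain this idea. Moreover, in the paper (v) is itself deduced from exactly this fact about the finite subgroups ${}^2\mathbf{G}(q)$, so appealing to it here would be circular.

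A related loose end: for Suzuki and Ree groups you leave open whether the Schwartz--Zippel step lands in all of $\mathbf{G}(\overline{\mathbb{F}}_q)$ or only in ``the relevant closed subvariety''. Gordeev's classification of critical constants applies only to the full algebraic group. So you need the statement that a polynomial not vanishing on $\mathbf{G}(\overline{\mathbb{F}}_q)$ does not vanish on $G$ once its degree is below $c\,q^{1/2}$. The paper takes this directly from Proposition~\ref{BreGreGurTaoProp} with $a=2$. With your $(t,t^\theta)$-parametrisation you would have to prove that the resulting morphism $\mathbb{A}^N\to\mathbf{G}$ is dominant, which you do not address. Tangent-space arguments do not settle it, since $t\mapsto t^\theta$ is inseparable.
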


From Theorem~\ref{thm:main_fin} we deduce Theorem~\ref{thm:OrthCaseImpliesConj} 
from the Introduction.

\begin{proof}[Proof of Theorem~\ref{thm:OrthCaseImpliesConj}]
First, we show that groups of types (i), (ii) and (iii) do indeed 
admit mixed identities of bounded length. 
For $\Alt_n$, $w(x)=[\sigma,x] ^{30}$ for $\sigma=(1\,2\,3)$ is a mixed identity. 
Case~(iii) is covered by Theorem~\ref{thm:main_fin}(ii). 
For Case~(ii), a family of groups of Lie type of unbounded order over fields of bounded size 
must contain a cofinal subfamily of classical groups. 
This case is covered by Proposition \ref{BddFieldSizeProp} below.

Conversely, suppose our claim fails and let $(G_i)_{i\in\mathbb{N}}$ be 
a witnessing sequence of pairwise nonisomorphic 
nonabelian finite simple groups satisfying mixed identities of bounded length which, 
excluding sporadic groups and the groups appearing in Cases~(i) and (ii), 
we may assume consists of groups of Lie type over fields of unbounded size. 
By \cite{bradfordschneiderthom2023length}*{Theorems~3 and 4} we may exclude 
projective special linear and unitary groups. 
By Theorem~\ref{thm:main_fin} we may exclude symplectic groups ($\Sp_{2m}(q)=C_m(q)$), odd-degree orthogonal groups ($\POmega_{2m+1}(q)=B_m(q)'$), and the exceptional groups. 
This leaves the even-degree orthogonal groups ($\POmega_{2m}^+(q)=D_m(q)'$ resp.\ $\POmega_{2m}^-(q)={}^2D_m(q^2)'$) of unbounded rank. 
\end{proof}
    
Singular mixed identities as in Theorem~\ref{thm:main_slfg}(iii) and Theorem~\ref{thm:main_fin}(ii) for nonsimply
laced root systems were constructed by Gordeev \cite{gordeev1997freedom}*{Theorem~3}. Theorem~\ref{thm:main_slfg} leads
us to Theorem~\ref{thm:strictsimpleintro}(i), which was already known for linear groups \cite{tomanov1985generalized}. For
nonlinear groups it follows from Theorem~\ref{thm:main_slfg}(i) as strict words are nonsingular.

By the above results, an infinite simple locally finite group that satisfies a nonsingular mixed identity must be as in Theorem~\ref{thm:main_slfg}(iii).

Note that the nonsingularity hypothesis in Conjecture~\ref{conj:main_lfsg}(ii) is necessary: e.g.\ both the finite alternating groups $\Alt_n$ and the groups $\PSL_n(q)$ (for any fixed prime power $q$) satisfy \emph{singular} mixed identities of bounded length (see \cite{bradfordschneiderthom2023length}*{Theorems 2 and 4}).

The following theorem partially establishes Conjecture~\ref{conj:main_lfsg}(ii):

\begin{theorem}\label{thm:part_conj_jones}
The finite alternating groups $\Alt_n$ and the groups from Theorem~\ref{thm:main_fin}(i), (iii), and (iv) together with the sporadic groups satisfy Conjecture~\ref{conj:main_lfsg}(ii). 
\end{theorem}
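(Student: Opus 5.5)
The claim is that, for each fixed $l$, only finitely many isomorphism classes in the stated families satisfy a nonsingular mixed identity of length at most $l$. Fix $l$ and suppose infinitely many pairwise nonisomorphic groups $G_i$ from these families each satisfy such an identity $w_i$ with $\norm{w_i}\le l$. Since there are finitely many sporadic groups, we may discard them. Passing to a subsequence, the $G_i$ are then either all alternating, or all of one fixed Lie type ${}^a\mathbf{G}$ from Theorem~\ref{thm:main_fin}(i), (iii) or (iv), where the type may have unbounded rank. We treat the cases separately.

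\emph{Alternating groups.} Here the degrees $n_i$ tend to infinity. By Theorem~\ref{thm:rank_bound}, every nonsingular mixed identity of $\Alt_n$ has length $\Omega(\log n/\log\log n)$. Once $n_i$ is large, this exceeds $l$, which is a contradiction.

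\emph{Fixed type, bounded rank.} If the rank is bounded along a subsequence, pass to constant rank, so that the type $\mathbf{G}$ is fixed. Pairwise nonisomorphism then forces $q_i\to\infty$. Theorem~\ref{thm:main_fin}(i), (iii) and (iv) give lengths $\Omega_{\mathbf G}(q_i)$, $\Omega_{\mathbf G}(q_i)$ and $\Omega(q_i^{1/2})$ respectively. These bounds apply to all mixed identities, not just nonsingular ones. One point needs care: Theorem~\ref{thm:main_fin}(i) and (iii) are stated for words in one variable with constants in $\mathbf{G}(\overline{\mathbb F}_q)$. We reduce to that setting as follows. A shortest mixed identity can be replaced by a one-variable mixed identity of the same length, by \cite{bradfordschneiderthom2023length}*{Lemma~2.2}, as recalled in the text. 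Constants lying in $G$ also lie in the ambient algebraic group, or in its image in the adjoint group; passing to the adjoint form, $G$ embeds there. So $l\ge \Omega(q_i^{1/2})\to\infty$, a contradiction. Note that the Suzuki and Ree families have bounded rank automatically.

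\emph{Unbounded rank.} It remains to handle the classical simply laced types $A_n$, $D_n$ and the twisted types ${}^2A_n$, ${}^2D_n$ with ranks $n_i\to\infty$; the exceptional types have bounded rank. For these, Theorem~\ref{thm:rank_bound} applies directly and uniformly in $q$: it gives nonsingular length $\Omega(\log n_i/\log\log n_i)\to\infty$. Combining the cases gives the result.

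The main step to check is that the two bounds together cover every sequence. We handle this by a dichotomy: along any sequence, either the rank is bounded (then pass to a subsequence of constant type and use $q\to\infty$) or unbounded (then use Theorem~\ref{thm:rank_bound}). This requires Theorem~\ref{thm:rank_bound} to hold for $D_n(q)'$ and ${}^2D_n(q^2)'$ with arbitrary $q$, which is how it is stated.

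The only remaining obstacle is this: in bounded rank over a fixed $q$ there are only finitely many groups, so nonisomorphism really does force $q\to\infty$.
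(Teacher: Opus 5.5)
Your proof is correct and follows the paper's own argument. You discard the finitely many sporadic groups and use Theorem~\ref{thm:main_fin}(i), (iii), (iv), which rules out short mixed identities altogether, for a fixed type of fixed rank with $q\to\infty$. You use Theorem~\ref{thm:rank_bound} for $\Alt_n$ and for the classical types of unbounded rank; the paper cites \cite{schneiderthom2022word} directly for $\Alt_n$, but Theorem~\ref{thm:rank_bound} rests on that same estimate.
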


Indeed, it was shown in \cite{schneiderthom2022word} that the conjecture holds when we restrict to finite alternating groups. It is also valid for the nonclassical (twisted) groups of Lie type in Theorem~\ref{thm:main_fin}(i), (iii), and (iv), as they do not have short mixed identities at all. Concerning classical (twisted) groups of Lie type, we still need to prove that the length $l=\norm{w}$ of a shortest nonsingular mixed identity tends to infinity if the rank of the group tends to infinity. This was shown in \cite{bradfordschneiderthom2023non}*{Corollary~2} for groups of type $A_{n-1}(q)=\PSL_n(q)$. In Section~\ref{sec:classical_grps_of_lie_type} of this paper we prove the same but for the classical (twisted) groups of Lie type ${}^2A_{n-1}(q^2)=\PSU_n(q^2)$, $B_m(q)'=\POmega_{2m+1}(q)$, $C_m(q)=\Sp_{2m}(q)$, $D_m(q)'=\POmega_{2m}^+(q)$, ${}^2D_m(q^2)'=\POmega_{2m}^-(q)$ (see Theorem~\ref{thm:rank_bound}). Hence Conjecture~\ref{conj:main_lfsg}(ii) holds for such groups which have a simply laced root system. Also, clearly we can exclude the finitely many sporadic groups from our considerations.

\section{Classical groups of Lie type}\label{sec:classical_grps_of_lie_type}

In this section we prove lower bounds on a shortest nonsingular mixed identities for groups of type $\PSp_{2m}(q)$, $\POmega_{2m}^{\pm}(q)$, $\POmega_{2m+1}(q)$, and $\PSU_n(q^2)$. Choose the seminorm $\norm{\bullet}\colon \GL(V)\to\mathbb{N}$ to be the \emph{projective rank seminorm} $\norm{g}\coloneqq\min\set{\rk(g-\lambda 1_V)}[\lambda\in K^\times]$ (where $K=\mathbb{F}_q$ in the bilinear case, and $K=\mathbb{F}_{q^2}$ in the unitary case, and $V=K^n$). 

\subsection{The bilinear and unitary case}

We start with the bilinear case.

\begin{theorem}\label{thm:orth_symp}
Let $V\cong\mathbb{F}_q^n$ be a vector space of finite dimension $n\in\mathbb{N}$ over the finite field $\mathbb{F}_q$ with $q$ elements. Either let $f\colon V\times V\to\mathbb{F}_q$ be a nondegenerate alternating bilinear form or let $Q\colon V \to\mathbb{F}_q$ 
be a nonsingular quadratic form and let $f\colon V\times V\to\mathbb{F}_q$ be the associated 
symmetric bilinear form. Set $C=\GI(V,f)$ resp.\ $\GI(V,Q)$ to be the full isometry group of $f$ resp.\ $Q$. Moreover, let $d,l,r\in\mathbb{N}$ and assume the word 
$$
w=x_{i(1)}^{\varepsilon(1)}c_1\cdots c_{l-1}x_{i(l)}^{\varepsilon(l)}\in C\ast\mathbf{F}_r
$$ 
to be reduced of length $l\geq 2$, i.e.\ we restrict to $c_0=c_l=1_V$. Let $G =\Sp(V,f)$ resp.\ $G=\Omega(V,Q)$ denote the symplectic resp.\ quasisimple orthogonal group with respect to $f$ resp.\ $Q$. Let $\set{u_1,\ldots,u_d}$ and $\set{w_1,\ldots,w_d}$ be $d$-element linearly independent subsets of $V$ such that $u_i\perp u_j$, $w_i\perp w_j$ with respect to $f$, 
and $Q(u_i)=Q(w_i)=0$, for all $i,j\in\set{1,\ldots,d}$. If $i(1)=i(l)$ and $\varepsilon(1) = -\varepsilon(l)$, then assume in addition that the linear spans of $\{u_1,\ldots,u_d\}$ and $\{w_1,\ldots,w_d\}$ intersect trivially and are mutually perpendicular with respect to $f$. Then, if $d$ satisfies
\begin{equation}\label{eq:symp_orth_cs}
d\leq\frac{1}{l}\cdot(\norm{w}_{\rm crit}/3-1),
\end{equation}
there exist $g_1,\ldots,g_r\in G$ such that $u_i.w(g_1,\ldots,g_r)=w_i$ for $i\in\{1,\ldots,d\}$.
\end{theorem}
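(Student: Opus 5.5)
Following the approach of \cite{bradfordschneiderthom2023non} for $\PSL_n(q)$, we construct the $g_i$ step by step, tracking the images of the $d$ vectors through the $l$ letters of $w$ as a ``walk'' in $V$. Set $w_k\coloneqq x_{i(k)}^{\varepsilon(k)}c_k\cdots c_{l-1}x_{i(l)}^{\varepsilon(l)}$ for the suffixes, with $c_l=1_V$. We inductively choose totally singular, linearly independent $d$-tuples $v^{(k)}=(v^{(k)}_1,\ldots,v^{(k)}_d)$ for $k=0,\ldots,l$, with $v^{(0)}=u$ and $v^{(l)}=w$. We then define partial isometries: the letter $x_{i(k)}^{\varepsilon(k)}$ should send $v^{(k-1)}$ to $v^{(k)}c_k^{-1}$ (for the last letter, to $w$). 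Each $g_i$ is thus prescribed on a finite set of vectors, namely the union over all occurrences of $x_i$ (inverses read backwards). The goal is to choose the intermediate tuples so that, for each $i$, the prescribed partial map is a well-defined isometry between totally singular (or at least $f$- and $Q$-compatible) subspaces. Witt's extension theorem then extends it to an element of $\GI(V,f)$ resp.\ $\GI(V,Q)$.

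The genericity we aim for is this: the span $U_k$ of the new tuple $v^{(k)}$ should intersect trivially, and be perpendicular to, the span $W$ of everything chosen so far together with its images under the relevant constants, except where the word structure forces otherwise. Then, for each variable $x_i$, the domain and codomain of the prescribed map are direct sums of totally singular blocks that are pairwise perpendicular, so the map is an isometry. The choice is possible by a dimension count. At step $k$ the forbidden space has dimension $O(kd)$, at most about $3ld$ after accounting for the images under $c_k^{\pm1}$. We need a totally singular $d$-space inside $W^\perp$ avoiding $W$, and $\dim W^\perp\ge n-\dim W$, so this works as long as $n$ is large compared to $ld$. Moreover $n\ge\norm{w}_{\rm crit}$ always holds, so \eqref{eq:symp_orth_cs} gives this room.

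The real difficulty is at the critical positions $j\in J_-(w)$, where $x_i^{\varepsilon}c_jx_i^{-\varepsilon}$ occurs. There the same $g_i$ is used both to send $v^{(j-1)}$ to $v^{(j)}c_j^{-1}$ and (inverted) to send $v^{(j+1)}c_j\,\cdot$ back. Consistency requires that $v^{(j)}c_j^{-1}$ (the image of $v^{(j)}$ under $c_j^{-1}$) not collide with the previously defined domain or codomain of $g_i$. This is where $\norm{c_j}$ enters: since $c_j$ differs from every scalar by rank at least $\norm{w}_{\rm crit}$, we choose $v^{(j)}$ inside a subspace of dimension at least $\norm{c_j}-O(ld)$ on which $c_j-\lambda$ is injective and whose image avoids $W$. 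The factor $3$ and the $-1$ in \eqref{eq:symp_orth_cs} should come from this count: the fixed-space/eigenspace condition costs $\norm{c_j}$, and each of $W$, $Wc_j$, $W^\perp$ costs another $ld$. We also need the resulting vectors to be singular and mutually perpendicular. I would handle this by picking vectors one at a time: at each step the allowed set is the complement of a union of proper subspaces in a subspace of dimension $\gtrsim \norm{c_j}/3$, and it contains a singular vector not in their union. The one-dimensional-at-a-time argument is cleaner than picking whole $d$-spaces.

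Finally we check the boundary condition. If $i(1)=i(l)$ and $\varepsilon(1)=-\varepsilon(l)$, the same generator sends $u$ to $v^{(1)}$ and $v^{(l-1)}$ to $w$ (inverted). The extra hypothesis that $\langle u\rangle$ and $\langle w\rangle$ are disjoint and perpendicular is exactly what makes this union of prescriptions an isometry. Then Witt's theorem gives $g_i\in\GI$. To land in $G=\Sp$ or $\Omega$, use the freedom on the large perpendicular complement: modify $g_i$ by an element fixing the finite prescribed subspace, for instance a suitable reflection product, to adjust the spinor norm and determinant, and pass to $\Omega$. I expect the main obstacle to be the critical-constant step together with this final correction into $\Omega(V,Q)$; the bound $\norm{w}_{\rm crit}/3-1$ must leave exactly one spare dimension for it.
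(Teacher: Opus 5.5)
The overall scheme matches the paper's: build the walk one vector at a time, keep both sides of every generator totally singular, extend by Witt's lemma, and fix the determinant and spinor norm on a nondegenerate complement (this is Remark~\ref{rmrk:Witt}). There is, however, a genuine gap at the critical positions, which is exactly where you locate the difficulty.

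Take $j\in J_-(w)$ with $x_i^{\varepsilon}c_jx_i^{-\varepsilon}$ in the word. In your indexing, both $v^{(j)}c_j^{-1}$ (the image of $v^{(j-1)}$) and $v^{(j)}$ (the image of $v^{(j+1)}c_{j+1}^{-1}$) lie in the codomain of $g_i^{\varepsilon}$. The corresponding domain vectors are to be perpendicular. So the prescribed map is an isometry only if $f(v^{(j)}_a c_j^{-1},v^{(j)}_a)=0$ for every $a$.

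The cross terms with $a\neq b$ are linear in the new vector, and your one-vector-at-a-time choice handles them. The diagonal term is not linear: it is a quadratic form in the new vector. It is not of the shape ``lie in a subspace and avoid a union of subspaces''. It is also not implied by singularity; in the symplectic case every vector is isotropic, yet $f(vc_j^{-1},v)$ is generically nonzero. When it is nonzero, Witt's lemma cannot be applied. Your requirement that $c_j-\lambda$ be injective only gives linear independence of $v$ and $vc_j^{-1}$ modulo the earlier span. That is the other half of what is needed.

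So you need a way to satisfy two quadratic equations at once while avoiding roughly $q$ subspaces. The paper does this in Case~2 by taking $Z'=Z+\langle v\rangle$. It imposes $v\perp Z$ and $vc_j\perp Z$ (at most $2(dl-1)$ linear equations), together with $Q(v)=0$ and $f(vc_j,v)=0$. Warning's second theorem then gives at least $q^{n-2\dim Z-4}$ common solutions; the same count also justifies your unproved claim in Case~1. From these it removes the $q^{\dim Z}$ vectors of $Z$, and at most $q^{dl-1}(q-m+mq^{d_{\max}})$ vectors with $v(c_j-\lambda)\in Z$ for some $\lambda\in\mathbb{F}_q$. Positivity reduces to $\norm{c_j}=n-d_{\max}>3dl+2$.

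Hence the slack coming from the $-1$ in \eqref{eq:symp_orth_cs} is used up by the two quadratic equations and the union over $\lambda$. It does not provide a spare dimension for the spinor-norm correction.

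This constant is also reachable only because the linear conditions are imposed relative to the same side of the same generator, namely $Z$ and $Z'c_j^{-1}$, each of dimension at most $dl-1$. You propose instead to make the new vectors perpendicular to everything chosen so far and to its images under the constants. That uses considerably more than $2(dl-1)$ linear conditions, and it does not fit within $\norm{w}_{\rm crit}\geq 3dl+3$.
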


\begin{remark}
Note that the hypothesis ``\emph{$Q(u_i)=Q(w_i)=0$ for all $i$}'' 
is vacuous except for $p=2$ in the orthogonal case.
\end{remark} 

For the proof we write $\norm{w}_k$ for the \emph{$k$-norm} of $w$, i.e.\ the number of occurrences of $x_k$ and $x_k^{-1}$ in $w$ ($k\in\set{1,\ldots,r}$). 

\begin{remark}\label{rmrk:Witt}
We recall that, by Witt's Lemma, the full isometry group $\GI(V)$ of a formed space $V$ of Witt index $m$ acts transitively on the set of ordered bases for $m$-dimensional totally isotropic subspaces of $V$. For $U \leq V$ a totally isotropic subspace of  dimension $(m-1)$, let $U'$ be a totally isotropic subspace of  dimension $m$ containing $U$. Considering the possible determinants (and, in the orthogonal case, spinor norms) of elements of $\GI(V)$ preserving $U'$ and fixing an ordered basis for $U$, we see that the quasisimple subgroup $G$ of $\GI(V)$ (that is, $G = \Sp(V)$; $\Omega(V)$ or $\SU(V)$ in the cases for which the form on $V$ is nondegenerate alternating bilinear; nonsingular quadratic or nondegenerate sesquilinear) acts transitively on the set of ordered bases for $(m-1)$-dimensional totally isotropic subspaces of $V$.
\end{remark}

\begin{proof}[Proof of Theorem~\ref{thm:orth_symp}]
We have to find vectors $v_{i,j}^{\varepsilon}$ for $i\in\{1,\ldots,d\}$, $j\in\{1,\ldots,l\}$, $\varepsilon\in\{\pm1\}$ and elements $g_1,\ldots,g_r\in G$ as in the proof of \cite{bradfordschneiderthom2023non}*{Theorem~1}. This means that
\begin{gather*}
u_1=v_{1,1}^{\varepsilon(1)}\stackrel{g^{\varepsilon(1)}_{i(1)}}{\mapsto}
v_{1,1}^{-\varepsilon(1)}\stackrel{c_1}{\mapsto} v_{1,2}^{\varepsilon(2)}\stackrel{g^{\varepsilon(2)}_{i(2)}}{\mapsto}\cdots\stackrel{g^{\varepsilon(l-1)}_{i(l-1)}}{\mapsto}v_{1,l-1}^{-\varepsilon(l-1)}\stackrel{c_{l-1}}{\mapsto}v_{1,l}^{\varepsilon(l)}\stackrel{g^{\varepsilon(l)}_{i(l)}}{\mapsto}v_{1,l}^{-\varepsilon(l)}=w_1\\
\vdots\\
u_d=v_{d,1}^{\varepsilon(1)}\stackrel{g^{\varepsilon(1)}_{i(1)}}{\mapsto}v_{d,1}^{-\varepsilon(1)}\stackrel{c_1}{\mapsto} v_{d,2}^{\varepsilon(2)}\stackrel{g^{\varepsilon(2)}_{i(2)}}{\mapsto}\cdots\stackrel{g^{\varepsilon(l-1)}_{i(l-1)}}{\mapsto}v_{d,l-1}^{-\varepsilon(l-1)}\stackrel{c_{l-1}}{\mapsto}v_{d,l}^{\varepsilon(l)}\stackrel{g^{\varepsilon(l)}_{i(l)}}{\mapsto}v_{d,l}^{-\varepsilon(l)}=w_d.
\end{gather*}
Actually, we only have to inductively define the vectors $v_{i,j}^{-\varepsilon(j)}$ for $j\in\{1,\ldots,l-1\}$ which determine the rest: The vectors $v_{i,j}^{\varepsilon(j)}$ are determined by the rule $v_{i,j+1}^{\varepsilon(j+1)}=v_{i,j}^{-\varepsilon(j)}.c_j$. Moreover, it holds that $v_{i,1}^{\varepsilon(1)}=u_i$ and $v_{i,l}^{-\varepsilon(l)}=w_i$, so that these vectors are also already fixed. Assume we have already chosen $v_{1,1}^+,v_{1,1}^-,\ldots v_{i,j-1}^+,v_{i,j-1}^-,v_{i,j}^{\varepsilon(j)}$ and we want to choose $v_{i,j}^{-\varepsilon(j)}$, where it is understood that $v_{i,0}^\varepsilon=v_{i-1,l}^\varepsilon$ for $i\in\{2,\ldots,d\}$ and $\varepsilon\in\{\pm1\}$. Write $U\coloneqq\gensubsp{u_1,\ldots,u_d}$ and $W\coloneqq\gensubsp{w_1,\ldots,w_d}$. To carry out the induction process, we define subspaces for $i\in\set{1,\ldots,d}$, $j\in\set{1,\ldots,l}$, $k\in\set{1,\ldots,r}$, and $\varepsilon\in\set{\pm1}$:
\begin{align*}
V_{i,j,k}^\varepsilon &\coloneqq\gensubsp{v_{i',j'}^\varepsilon}[i'=i\text{ and }j'\leq j, \text{ or }i'<i,\text{ and }i(j')=k],
\end{align*}
Define
\begin{align*}
V_k^\varepsilon\coloneqq\begin{cases} 
U & \text{if }(k,\varepsilon)=(i(1),\varepsilon(1))\text{ and } (k,\varepsilon)\neq(i(l),-\varepsilon(l))\\
W & \text{if }(k,\varepsilon)\neq(i(1),\varepsilon(1))\text{ and } (k,\varepsilon)=(i(l),-\varepsilon(l))\\
U+W & \text{if }(k,\varepsilon)=(i(1),\varepsilon(1))=(i(l),-\varepsilon(l))\\
\mathbf{0} &\text{otherwise.}
\end{cases}
\end{align*}
Then this vector $v_{i,j}^{-\varepsilon(j)}$ must be isotropic (i.e.\ $f(v_{i,j}^{-\varepsilon(j)},v_{i,j}^{-\varepsilon(j)})=0$ and $Q(v_{i,j}^{-\varepsilon(j)})=0$ in the orthogonal case for $p=2$) and orthogonal to and not contained in the subspace $Z\coloneqq V_{i,j-1,i(j)}^{-\varepsilon(j)}+V_{i(j)}^{-\varepsilon(j)}$, which we abbreviate by $v_{i,j}^{-\varepsilon(j)}\perpnotin Z$. Here again it is understood that for $i\in\{2,\ldots,d\}$, $k\in\{1,\ldots,r\}$, and $\varepsilon\in\{\pm1\}$, one has $V_{i,0,k}^\varepsilon=V_{i-1,l,k}^\varepsilon$. Moreover, $V_{1,0,k}^\varepsilon=\mathbf{0}$. We have that $\dim(Z)\leq d\norm{w}_{i(j)}-1\leq dl-1$. Thus we have this many linear equations (linear forms) which need to hold for $v_{i,j}^{-\varepsilon(j)}$.
Next we have to ensure also that
$$
v_{i,j}^{-\varepsilon(j)}.c_j=v_{i,j+1}^{\varepsilon(j+1)}\perpnotin V_{i,j,i(j+1)}^{\varepsilon(j+1)}+V_{i(j+1)}^{\varepsilon(j+1)}\eqqcolon Z'.
$$
\vspace{3mm}
		
\noindent\emph{Case~1: Either $j\in J_0(w)$ or $j\in J_+(w)$.} For $j\in J_0(w)$, $i(j)\neq i(j+1)$, so $V_{i,j,i(j+1)}^{\varepsilon(j+1)}=V_{i,j-1,i(j+1)}^{\varepsilon(j+1)}$, which is already defined. Hence $\dim(Z')\leq d\norm{w}_{i(j+1)}-1\leq dl-1$.
		
For $j\in J_+(w)$, we have $i(j)=i(j+1)$ and $\varepsilon(j)=\varepsilon(j+1)$, so $V_{i,j,i(j+1)}^{\varepsilon(j+1)}=V_{i,j,i(j)}^{\varepsilon(j)}$ which is also already defined. Then again $\dim(Z')\leq d\norm{w}_{i(j)}-1\leq dl-1$.
Altogether, we must have
$$
v_{i,j}^{-\varepsilon(j)}\perpnotin Z\cup Z'.c_j^{-1}\quad\text{and}\quad v_{i,j}^{-\varepsilon(j)}\perp v_{i,j}^{-\varepsilon(j)} 
\quad\text{and}\quad Q(v_{i,j}^{-\varepsilon(j)}) = 0
$$
(the third condition being required only in the symmetric case and for $p=2$, 
in which case the second condition is vacuous).

By Warning's second theorem (see \cite{warning} or \cite{asgarli2018new}), 
there are at least 
$$
q^{n-2(dl-1)-2}-2q^{dl-1}=q^{n-2dl}-2q^{dl-1}\geq q^{n-2dl}-q^{dl}
$$
possible choices for $v_{i,j}^{-\varepsilon(j)}$. This is positive when $n-2dl>dl$, i.e.\ $n>3dl$ which holds by the Assumption~\eqref{eq:symp_orth_cs} in the theorem as $3dl<3(dl+1)\leq\norm{w}_{\rm crit}\leq n$.
		
\vspace{3mm}
		
\noindent\emph{Case~2: We have $j\in J_-(w)$.} Then
\begin{align*}
Z'=V_{i,j,i(j+1)}^{\varepsilon(j+1)}+V_{i(j+1)}^{\varepsilon(j+1)}&=V_{i,j,i(j)}^{-\varepsilon(j)}+V_{i(j)}^{-\varepsilon(j)}\\
&=V_{i,j-1,i(j)}^{-\varepsilon(j)}+\langle v_{i,j}^{-\varepsilon(j)}\rangle+V_{i(j)}^{-\varepsilon(j)}.
\end{align*}
Recall that $j\in\{1,\ldots,l-1\}$ and that $Z=V_{i,j-1,i(j)}^{-\varepsilon(j)}+V_{i(j)}^{-\varepsilon(j)}$. Then still $\dim(Z)\leq d\norm{w}_{i(j)}-1\leq dl-1$.
In total, we must ensure that
$$
v_{i,j}^{-\varepsilon(j)}\perpnotin Z\cup (Z+\langle v_{i,j}^{-\varepsilon(j)}\rangle).c_j^{-1}\quad\text{and}\quad v_{i,j}^{-\varepsilon(j)}\perp v_{i,j}^{-\varepsilon(j)} \quad\text{and}\quad Q(v_{i,j}^{-\varepsilon(j)}) = 0
$$ 
(with the third condition, once again, only being a requirement 
for the symmetric case in characteristic $2$). 

Hence there are at least
$$
q^{n-\dim(Z)-\dim(Z)-2-2}-q^{\dim(Z)}-\card{\{v\in V \mid v.c_j\in Z+\langle v\rangle\}}
$$
choices for $v_{i,j}^{-\varepsilon(j)}$.
		
The equation $v.(c_j-\lambda)\in Z$ has $q^{\dim(Z)}\leq q^{dl-1}$ solutions $v\in V$ for $\lambda$ not an eigenvalue of $c_j$; and it has $q^{\dim(Z)+d_\lambda}\leq q^{dl-1+d_{\lambda}}$ solutions $v\in V$ if $\lambda\in\mathbb{F}_q$ is an eigenvalue of $c_j$ with geometric multiplicity $d_\lambda$. So we have to exclude at most
$$
\card{\{v\in V \mid v.c_j\in Z+\langle v\rangle\}}\leq(q-m)q^{dl-1}+\sum_{\lambda}{q^{dl-1+d_\lambda}}
$$
many vectors, where $m$ is the number of eigenvalues and $\lambda$ ranges over the eigenvalues of $c_j$. Let $d_{\max}\in\mathbb{N}$ be the maximal geometric multiplicity that occurs:
Then the last expression is bounded from above by 
$$
q^{dl-1}(q-m+mq^{d_{\max}}).
$$
So we need to make the following expression positive
\begin{align*}
q^{n-2(dl-1)-2\cdot 2}-q^{dl-1}(q-m+1+mq^{d_{\max}})\\
=q^{n-2dl-2}-q^{dl-1}(q-m+    1+mq^{d_{\max}})\\
=q^{dl-1}(q^{n-3dl-1}-(q-m+    1+mq^{d_{\max}}))\\
\geq q^{dl-1}(q^{n-3dl-1}-(0+    1+q\cdot q^{d_{\max}})), 
\end{align*}
which is positive if $n-3dl-1>d_{\max}+1$, i.e.\ $n-d_{\max}=\norm{c_j}\geq \norm{w}_{\rm crit}>3dl+2$, which holds by Assumption~\eqref{eq:symp_orth_cs}. 
By Witt's Lemma, and noting Remark \ref{rmrk:Witt}, 
we find elements $g_1,\ldots,g_r\in G$ such that 
$u_i.w(g_1,\ldots,g_r)=w_i$ for $i\in\set{1,\ldots,d}$.
\end{proof}
	
We proceed with the unitary case:
	
\begin{theorem}\label{thm:untry_cse}
Let $V\cong\mathbb{F}_{q^2}^n$ be a vector space of dimension $n\in\mathbb{N}$ over the finite field $\mathbb{F}_{q^2}$ with $q^2$ elements and $f\colon V\times V\to\mathbb{F}_{q^2}$ be a nondegenerate sesquilinear form. Set $C=\GU(V,f)$ to be the general unitary group of $f$. Moreover, let $d,l,r\in\mathbb{N}$ and assume the word 
$$
w=x_{i(1)}^{\varepsilon(1)}c_1\cdots c_{l-1}x_{i(l)}^{\varepsilon(l)}\in C\ast\mathbf{F}_r
$$ 
to be reduced of length $l\geq 2$. Let $G =\SU(V,f)$ denote the special unitary group with respect to $f$. Let $\{u_1,\ldots,u_d\}$ and $\{w_1,\ldots,w_d\}$ be $d$-element sets of linearly independent vectors such that $u_i\perp u_j$ and $w_i\perp w_j$ with respect to $f$ for all $i,j\in\{1,\ldots,d\}$. If $i(1)=i(l)$ and $\varepsilon(1) = -\varepsilon(l)$, then assume in addition that the linear spans of $\{u_1,\ldots,u_d\}$ and $\{w_1,\ldots,w_d\}$ intersect trivially and are perpendicular with respect to $f$. Then, if $d$ satisfies
\begin{equation}\label{eq:untry_cs}
d\leq\frac{\norm{w}_{\rm crit}-2q-1}{3l}  
\end{equation}
there exist $g_1,\ldots,g_r\in G$ such that $u_i.w(g_1,\ldots,g_r)=w_i$ for $i\in\{1,\ldots,d\}$.
\end{theorem}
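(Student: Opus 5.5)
The plan is to mirror the proof of Theorem~\ref{thm:orth_symp} step by step. We again build the chains of vectors $v_{i,j}^{\pm}$: they start at $u_i$, end at $w_i$, and satisfy $v_{i,j+1}^{\varepsilon(j+1)}=v_{i,j}^{-\varepsilon(j)}.c_j$. The subspaces $V_{i,j,k}^\varepsilon$ and $V_k^\varepsilon$ are defined exactly as before. At each step we choose $v_{i,j}^{-\varepsilon(j)}$ to be isotropic, i.e.\ $f(v,v)=0$, and to satisfy $v\perpnotin Z$. When $j\in J_0(w)\cup J_+(w)$ we additionally require $v\perpnotin Z'.c_j^{-1}$. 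When $j\in J_-(w)$ we additionally require $v.c_j\perpnotin Z+\langle v\rangle$. Once all vectors are chosen, the families $(v_{i,j}^{\varepsilon})_{i,\,j:\,i(j)=k}$ for $\varepsilon=\pm1$ span totally isotropic subspaces. The map sending one family to the other is therefore an isometry between totally isotropic subspaces. By Witt's Lemma together with Remark~\ref{rmrk:Witt} it extends to an element of $\SU(V,f)$, and the determinant is adjusted using the extra room in a complement. This gives the required $g_k$.

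The only genuinely new ingredient is the counting, since $v\mapsto f(v,v)$ is now a Hermitian form. It is not polynomial over $\mathbb{F}_{q^2}$, so it must be handled over $\mathbb{F}_q$. Write $v$ in $\mathbb{F}_q$-coordinates, so that $V\cong\mathbb{F}_q^{2n}$. Each $\mathbb{F}_{q^2}$-linear condition $f(v,z)=0$ becomes two $\mathbb{F}_q$-linear equations. The isotropy condition $f(v,v)=0$ is a single quadratic equation over $\mathbb{F}_q$, because $f(v,v)\in\mathbb{F}_q$. The solution set is then the isotropic cone of a nondegenerate Hermitian form on $Z^\perp/(Z\cap Z^\perp)$, which has $\mathbb{F}_{q^2}$-dimension at least $n-2\dim Z$. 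Its number of isotropic vectors is known exactly, roughly $q^{2(n-2\dim Z)-1}$ with an error of size $q^{n-2\dim Z}$. Alternatively, Warning's theorem over $\mathbb{F}_q$ gives at least $q^{2n-2\cdot 2(dl-1)-2}$ solutions. From this count we subtract the vectors lying in $Z$ and, in Case~1, those lying in $Z'.c_j^{-1}$. Each of these contributes at most $q^{2(dl-1)}$.

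The main obstacle is Case~2, $j\in J_-(w)$. There we must exclude the vectors $v$ with $v.(c_j-\lambda)\in Z$ for some $\lambda\in\mathbb{F}_{q^2}$. Summing over the $q^2$ values of $\lambda$ gives at most $q^{2(dl-1)}\bigl(q^2-m+mq^{2d_{\max}}\bigr)$ bad vectors. This must be dominated by roughly $q^{2(n-2dl)}$, which requires $n-d_{\max}>3dl+O(1)$. I expect the weaker constant in \eqref{eq:untry_cs}, namely $\norm{w}_{\rm crit}-2q-1$, to come from two losses. First, the projective rank seminorm only minimises over $\lambda\in K^\times$. Second, and more significantly, the Hermitian count has a main term of order $q^{2N-1}$ rather than $q^{2N}$, with $N$ the available dimension. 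The cleanest route is to compare $\mathbb{F}_q$-dimensions of the admissible set against the union of the bad affine subspaces, and then absorb the leftover $q$ factors. I would first verify that $n-d_{\max}\ge 3dl+2q+1$ makes the final expression positive, checking the estimate for small $q$ by hand. Granting this, the induction closes exactly as in the bilinear case, and Witt's Lemma finishes the proof.
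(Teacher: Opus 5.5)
Your overall argument is the paper's: the chains $v_{i,j}^{\pm}$, the conditions $v\perpnotin Z$ and $v\perpnotin Z'.c_j^{-1}$ (Case~1) or $v.c_j\perpnotin Z+\langle v\rangle$ (Case~2), and Witt's Lemma with Remark~\ref{rmrk:Witt} at the end. Where you differ is the counting step.

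\textbf{How your count differs from the paper's.} The paper applies Warning's second theorem over $\mathbb{F}_{q^2}$ in $n$ variables. There $v\mapsto f(v,v)$ is a polynomial of degree $q+1$. In Case~2, $v\mapsto f(v.c_j,v)$ is a second polynomial of degree $q+1$. This gives at least $q^{2(n-2\dim Z-2(q+1))}$ admissible vectors, and that degree $q+1$ is the entire source of the $2q$ in \eqref{eq:untry_cs}. Your two guesses for this loss are not where it comes from. Eigenvalues are nonzero, so $\norm{c_j}=n-d_{\max}$ exactly. The $q^{2N-1}$ main term of the Hermitian count only costs a bounded exponent.

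Your passage to $\mathbb{F}_q$-coordinates, where $f(v,v)$ is a single quadratic, is the better choice. It costs a factor $q^2$ instead of $q^{2(q+1)}$ and gives a $q$-independent threshold. So you prove a stronger statement than the one asked. As a by-product, $b$ in Remark~\ref{rmk:crit_length} could be taken absolute for unitary groups too.

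\textbf{What you must fix: the lower count in Case~2.} There the admissible set is not the isotropic cone of a Hermitian form. Even in Case~1 the relevant subspace is $(Z+Z'.c_j^{-1})^\perp$, not $Z^\perp$. The condition $v.c_j\perp v$, which is part of $v.c_j\perpnotin Z+\langle v\rangle$, reads $f(v.c_j,v)=0$. This form is sesquilinear but not Hermitian and takes values in $\mathbb{F}_{q^2}$, so over $\mathbb{F}_q$ it is two quadratic equations, and your exact cone count does not apply to it. Use Warning over $\mathbb{F}_q$ throughout instead. Put $D=\dim Z\le dl-1$; the conditions $v\perp Z$ and $v.c_j\perp Z$ give $4D$ linear equations. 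Warning then gives at least $q^{2n-4D-6}$ solutions. There are at most $q^{2D}(1+q^{2d_{\max}+2})<2q^{2D+2d_{\max}+2}$ bad vectors. So the count is positive once $n-d_{\max}\ge 3D+5$, for which $\norm{w}_{\rm crit}\ge 3dl+2$ suffices.

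Case~1 needs only $q^{2n-4(dl-1)-2}>2q^{2(dl-1)}$, i.e.\ $n\ge 3dl-1$. Both thresholds follow from \eqref{eq:untry_cs}, since $2q+1\ge 5$. So the small-$q$ check you deferred does go through. The determinant adjustment also works: the isotropic families have dimension at most $dl\le\lfloor n/2\rfloor-2$, so there is room in a nondegenerate complement.
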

	
\begin{proof}
We retain the notations from the proof of Theorem~\ref{thm:orth_symp}, 
and seek once again to construct the vectors $v_{i,j}^{-\varepsilon(j)}$. 
Having done so, the existence of the $g_i$ will follow from Witt's Lemma 
and Remark \ref{rmrk:Witt}, as before.

\emph{Case~1: Either $j\in J_0(w)$ or $j\in J_+(w)$.} Define $Z$ and $Z'$ as in the proof of Theorem~\ref{thm:orth_symp}. Then we must have
$$
v_{i,j}^{-\varepsilon(j)}\perpnotin Z\cup Z'.c_j^{-1}\quad\text{and}\quad v_{i,j}^{-\varepsilon(j)}\perp            v_{i,j}^{-\varepsilon(j)}.
$$
It also holds that $\dim(Z),\dim(Z')\leq dl-1$. Hence if the following expression is positive, there is a legal choice for $v_{i,j}^{-\varepsilon(j)}$:
$$
q^{2(n-2(dl-1)-(q+1))}-2q^{2(dl-1)}>0
$$
we need $n-2(dl-1)-q-1>dl-1$, which means $n\geq 3dl+q-1$. This holds since $n\geq\norm{w}_{\rm crit}\geq 3dl+2q+1>3dl+q-1$ by Assumption~\eqref{eq:untry_cs}.
		
\vspace{3mm}
		
\noindent\emph{Case~2: $j\in J_-(w)$.} As above we must have that
$$
v_{i,j}^{-\varepsilon(j)}\perpnotin Z\cup (Z+\langle v_{i,j}^{-\varepsilon(j)}\rangle).c_j^{-1}\quad\text{and}\quad v_{i,j}^{-\varepsilon(j)}\perp v_{i,j}^{-\varepsilon(j)}.
$$ 
Hence we can estimate the number of possible choices for $v_{i,j}^{-\varepsilon(j)}$ as follows:
\begin{align*}
& q^{2(n-\dim(Z)-\dim(Z)-2(q+1))}-q^{2\dim(Z)}-\card{\{v\in V \mid v.c_j\in Z+\langle v\rangle\}}\\
& \geq q^{2(n-2(dl-1)-2(q+1))}-q^{2(dl-1)}-q^{2(dl-1)}(q^2-m+mq^{2d_{\max}})\\
&\geq q^{2(n-2dl-2q)}-q^{2(dl-1)}(1+q^2 q^{2d_{\max}}),
\end{align*}
so we need $n-2dl-2q>dl+d_{\max}$ which means $\norm{w}_{\rm crit}=n-d_{\max}\geq 3dl+2q+1$.
\end{proof}
	
\subsection{Diameter estimates and a proof of Theorem~\ref{thm:rank_bound}}

For $S\subseteq G$, $\diam(S)$ denotes the maximum distance between two elements of $S$ under the distance induced by our seminorm on $G$. Observe that \cite{schneiderthom2022word}*{Remark~1} (in the alternating case), \cite{bradfordschneiderthom2023non}*{Theorem~1} (in the special linear case) together with Theorems~\ref{thm:orth_symp} resp.~\ref{thm:untry_cse} (in the symplectic and orthogonal resp.\ the unitary case) from above imply that the quasisimple groups $G$ in these theorems satisfy
\begin{equation}\label{eq:ineq}
\norm{w}_{\rm crit}\leq\norm{w}(a\diam(w(G^r))+b)
\end{equation}
for every word $w$ of length $\norm{w}=l\geq 2$. 

\begin{remark} \label{rmk:crit_length}
In Inequality~\eqref{eq:ineq}, $a,b>0$ may be taken to be absolute constants in all cases except for that of $G$ a unitary group, in which case we may take $b=b(q)=2q+1$.
\end{remark}

Let $w'$ be an elementary reduction of $w$, where we delete a shortest critical constant $c_j$ and reduce the resulting word. We have that
$$
\norm{\diam(w(G^r))-\diam(w'(G^r))}\leq 2\norm{w}_{\rm crit}.
$$
so by \eqref{eq:ineq} we get
\begin{align*}
\diam(w'(G^r))&\leq\diam(w(G^r))+2\norm{w}_{\rm crit}\\
&\leq\diam(w(G^r))+2\norm{w}(a\diam(w(G^r))+b).
\end{align*}
Therefore, we obtain
$$
\diam(w'(G^r))+\frac{b}{a}\leq\left(\diam(w(G^r))+\frac{b}{a}\right)\left(1+2a\norm{w}\right).
$$
Hence, if we iterate these elementary reductions and $w=w_0,\ldots,w_m$ is a chain of words such that $w_j$ is a reduction of $w_{j-1}$ ($1\leq j\leq m$) and $w_m\neq1$ is \emph{strict}, then we get
$$
\diam(w_j(G^r))+\frac{b}{a}\leq\left(\diam(w_{j-1}(G^r))+\frac{b}{a}\right)\left(1+2a\norm{w_{j-1}}\right).
$$
for all $j=1,\ldots,m$, and by iterating we obtain
\begin{align*}
\diam(w_m(G^r))+\frac{b}{a} &\leq \left(\diam(w_0(G^r))+\frac{b}{a}\right)\prod_{j=0}^{m-1}(1+2a\norm{w_j})\\
&\leq\left(\diam(w_0(G^r))+\frac{b}{a}\right)(1+2\norm{w_0})^m\\
&\leq\left(\diam(w(G^r))+\frac{b}{a}\right)(1+2\norm{w})^{\lfloor\norm{w}/2\rfloor}.
\end{align*}
This is since $\norm{w_{j-1}}-2\geq\norm{w_j}$ so that $m\leq\lfloor\norm{w}/2\rfloor$. Hence by Inequality~\eqref{eq:ineq} and the last estimate we get
\begin{align*}
n=\norm{w_m}_{\rm crit}&\leq\norm{w_m}(a\diam(w_m(G^r))+b)\\
n&\leq\norm{w}(a\diam(w(G^r))+b)(1+2\norm{w})^{\lfloor\norm{w}/2\rfloor}.
\end{align*}
Thus we obtain
\begin{equation}\label{eq:final_est}
\frac{1}{(1+2\norm{w})^{\lfloor\norm{w}/2\rfloor}\norm{w}}\leq\frac{a\diam(w(G^r))+b}{n}.   
\end{equation}
In particular, this implies that when $\norm{w}$ is bounded and $n\to\infty$, then either 
$$
\diam(w(G^r))\to\infty
$$ 
or we are in the unitary case and $q \to\infty$ (since the latter is the only situation in which $a$ or $b$ can be unbounded, as noted above). 

\begin{proof}[Proof of Theorem~\ref{thm:rank_bound}]
Write $l=\norm{w}$. The alternating-group case follows from the
computation in
\cite{schneiderthom2022word}*{Theorem~1(i) and Remark~1}.
Indeed, for a nonsingular mixed identity their estimate specializes to
$
 \frac{1}{n}
 \geq
 \frac{1}{2}
 \exp\left(-\frac{l}{2}\log(5l)\right),
$
and hence
$
 n\leq 2(5l)^{l/2}.
$
The same computation already gives the asserted lower bound. The case
of groups of type $A_n$ was established in
\cite{bradfordschneiderthom2023non}*{Corollary~2}.

We consider the remaining classical groups. In the notation of this
section, Equation~\eqref{eq:final_est} gives
\[
 \frac{1}
 {(1+2l)^{\lfloor l/2\rfloor}l}
 \leq
 \frac{a\diam(w(G^r))+b}{n}.
\]
Since $w$ is a mixed identity, $\diam(w(G^r))=0$, and therefore
$n\leq b\,l(1+2l)^{\lfloor l/2\rfloor}.$
As per Remark \ref{rmk:crit_length}, for the symplectic and orthogonal groups, the constants $a$ and $b$ are
absolute. In the unitary case we may take $b=2q+1$. On the other hand,
\cite{bradfordschneiderthom2023length}*{Lemmas 8.2 and 8.5} give
$l=\Omega(q)$ for every mixed identity of $\PSU_n(q^2)$, even with
constants in the ambient projective general linear group. Consequently,
$b=O(l+1)$ also in the unitary case.

Thus, in every remaining case, there is an absolute constant $C>0$
such that
$
 n\leq C\,l(l+1)(1+2l)^{\lfloor l/2\rfloor}.
$
Taking logarithms, we obtain
\[
 \log n
 \leq
 \log C+\log l+\log(l+1)
 +\frac{l}{2}\log(1+2l)
 \leq C'\,l\log(l+1)
\]
for an absolute constant $C'>0$. If $l\geq\log n$, the desired
conclusion is immediate. Otherwise,
$
 \log(l+1)\leq\log(\log n+1)=O(\log\log n),
$
and hence
\[
 l=\Omega\left(\frac{\log n}{\log\log n}\right).
\]
Finally, the degree of the natural classical module is comparable with
the Lie rank, so replacing the former by the rank parameter $n$ only
changes the implicit constant. This proves the theorem.
\end{proof}

We may now also deduce Theorem~\ref{thm:strictsimpleintro}(ii) from the Introduction.

\begin{proof}[Proof of Theorem~\ref{thm:strictsimpleintro}(ii)]
    For a contradiction, suppose that $(G_i)_{i\in\mathbb{N}}$ is a sequence 
    of pairwise nonisomorphic nonabelian finite 
    simple groups satisfying strict mixed identities $w_i \in G_i \ast \gensubgrp{x}$ 
    of fixed length. 
    By Theorem~\ref{thm:rank_bound} we may assume that all $G_i$ 
    are groups of Lie type of bounded rank. 
    Passing to a subsequence, we may further suppose that there is 
    a Lie type $\mathbf{G}$; $a \in \set{1,2,3}$ and distinct prime powers 
    $q_i$ such that for all $i$, $G_i = {}^a \mathbf{G}(q_i)'$. 
    The required contradiction then follows from Theorem~\ref{thm:main_fin} 
    (where in Case~(ii) of Theorem~\ref{thm:main_fin} we use the fact that $w_i$ 
    has no critical constants). 
\end{proof}
    
\section{Algebraic groups}\label{sec:alg_grps}

In this section we establish Theorem~\ref{thm:main_slfg}(ii)--(iv), which will imply Theorem~\ref{thm:main_fin}(i)--(iv) by applying a Schwartz-Zippel bound. We use the language of linear algebraic groups. Throughout we assume that $K \leq E \leq F$ are field extensions, with $F$ algebraically closed, and that $G=\mathbf{G}(E)$ or ${}^a\mathbf{G}(E)$ is a (twisted) linear simple algebraic group which splits over $E$ (with a twist $\alpha\in\Aut(E)$ of order $a\in\set{2,3}$). Let $V=\mathbf{V}(E)$ be an absolutely irreducible module of $\mathbf{G}(E)$ resp.\ ${}^a\mathbf{G}(E)$. For a variety $V\subseteq\mathbf{V}(F)$ write $V(E)$ for its $E$-points. For polynomials $\mathcal{P}\subseteq F[X_1,\ldots,X_n]$ write $V_E(\mathcal{P})$ for the variety of $E$-points on which the polynomials in $\mathcal{P}$ vanish.

\begin{definition}
For $V$ as above, and $v\in V$, $l\in V^\ast$ such that $v.l=0$, 
we define the set of $(v,l)$-\emph{small} constants for $G$ with respect to the module $V$ by $\Lambda_{v,l}(G,V)\coloneqq\set{c\in\GL(V)}[(v.c^g).l=0 \text{ for all }g\in G]$.

When $e_1,\ldots,e_n\in V=\mathbf{V}(E)$ is an ordered weight basis for the algebraic group $G=\mathbf{G}(E)$ with dual basis $e_1^\ast,\ldots,e_n^\ast\in V^\ast$, we abbreviate $\Lambda_{e_1,e_n^\ast}(G,V)$ by $\Lambda(G,V)$. The elements of $\Lambda(G,V)$ are called \emph{T-small} with respect to $V$ (after Tomanov~\cite{tomanov1985generalized}*{p. 37}).
\end{definition}

\subsection{Infinite fields}

First, we consider the case where $E=F$ is algebraically closed. Indeed, it would be enough to require that $E$ is infinite, since then $\mathbf{G}(E)$ is Zariski dense in $\mathbf{G}(F)$ (by \cite{borel2012linear}*{Corollary~18.3}). In this situation, we have the following main result. When $w\in\GL(\mathbf{V}(K))\ast\gensubgrp{x}$ is a word with constants, we may assume that $K$ is finitely generated over its prime field.

\begin{lemma}[\cite{tits1972free}*{Lemma~4.1}]\label{lem:tits_lem}
Let $K$ be a finitely generated infinite field and let $t\in K^\times$ be an element of infinite order. Then there exists a local field $L$ equipped with an absolute value $\norm{\bullet}\colon L\to\mathbb{R}_{\geq 0}$ such that $K\leq L$ and $\norm{t}\neq 1$. 
In particular, $\norm{\bullet}\colon L\to\mathbb{R}_{\geq 0}$ 
is nontrivial on $K^\times$.
\end{lemma}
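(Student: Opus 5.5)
We follow Tits' argument: construct $L$ by embedding $K$ into a completion, choosing the place according to whether $K$ has characteristic zero or positive characteristic. Since $K$ is finitely generated, write $K=k(x_1,\ldots,x_m)$ with $k$ the prime field. Choose a transcendence basis $y_1,\ldots,y_s$ of $K$ over $k$, so that $K$ is a finite extension of $k(y_1,\ldots,y_s)$. The goal is a discrete (or archimedean) valuation $v$ on $K$ with $v(t)\neq 0$ and residue field finite, or $K$ embedded in $\mathbb{R}$ or $\mathbb{C}$ with $\norm{t}\neq1$. The completion $L$ of $K$ at $v$ is then a local field: a finite extension of $\mathbb{Q}_p$, of $\mathbb{F}_p((T))$, or $\mathbb{R}/\mathbb{C}$.

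\emph{Step 1 (reduction to a finitely generated ring).} Pick a finitely generated $\mathbb{Z}$-subalgebra $A\subseteq K$ containing $t$, $t^{-1}$ and the generators, with fraction field $K$. Because $t$ has infinite order, $t$ is not a root of unity.

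\emph{Step 2 (characteristic $0$).} If $t$ is transcendental over $\mathbb{Q}$, extend the transcendence basis so that $t=y_1$. Embed $\mathbb{Q}(y_2,\ldots,y_s)$ into $\mathbb{C}$ by algebraically independent complex numbers, send $y_1$ to a transcendental of modulus $2$ that is independent of these, and extend to the finite extension $K$. This gives an archimedean absolute value with $\norm{t}=2$. If $t$ is algebraic, then $t$ lies in a number field $N=\overline{\mathbb{Q}}\cap K$. A non-root-of-unity in a number field either has some nonarchimedean valuation $\neq0$ (if it is not an algebraic unit), or some archimedean absolute value $\neq1$; the latter is Kronecker's theorem, since an algebraic integer all of whose conjugates have modulus $1$ is a root of unity. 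Extend that place from $N$ to $K$: use a $\mathfrak{p}$-adic field $N_\mathfrak{p}$ or $\mathbb{C}$, embed the purely transcendental part via elements algebraically independent over $N_\mathfrak{p}$ (the local field $N_\mathfrak{p}$ has infinite transcendence degree over $N$ only if we pass to $\mathbb{C}_p$). This is the main obstacle, because the completion must stay locally compact.

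Tits handles it by specialization instead. Choose a place of $K$ over $N$ given by a homomorphism $A\to \overline{N}$ that is injective on $N[t]$, which is possible by Hilbert's Nullstellensatz applied to $A\otimes N$. Alternatively, and more simply, take the valuation of $K/N$ given by the local ring at a generic smooth point, nontrivial on nothing from $N$. I would follow Tits: an absolute value on $K$ restricting to the chosen one on $N$ can be obtained by embedding $K$ into a finite extension of $N_{\mathfrak p}((T_1))\cdots$. To keep local compactness, I would instead note that $K$ embeds in a finite extension of $N_{\mathfrak p}$ after specializing the transcendentals to suitable algebraically independent elements. Such elements exist since $\mathbb{Q}_p$ has uncountable transcendence degree, and this embedding is legitimate because $K$ is countable.

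\emph{Step 3 (characteristic $p$).} Here $t$ of infinite order forces $t$ to be transcendental over $\mathbb{F}_p$, as the algebraic elements lie in a finite field. Take $t=y_1$, embed $\mathbb{F}_p(y_1,\ldots,y_s)$ into $\mathbb{F}_p((y_1))$ by mapping $y_2,\ldots,y_s$ to power series algebraically independent over $\mathbb{F}_p(y_1)$ (again possible by uncountable transcendence degree). Then extend to the finite extension $K$ inside a finite extension $L$ of $\mathbb{F}_p((y_1))$, which is a local field with $\norm{t}=p^{-1}\neq1$. The final clause follows since $t\in K^\times$.
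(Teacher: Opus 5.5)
The paper gives no proof of this lemma and simply cites Tits. Your final argument is essentially Tits' own, and it is correct once cleaned up:
\begin{itemize}
\item If $t$ is transcendental over the prime field, put it in a transcendence basis. Then embed $K$ into $\mathbb{C}$, resp.\ into a finite extension of $\mathbb{F}_p(\!(T)\!)$, with $t$ sent to an element of absolute value $\neq 1$.
\item If $t$ is algebraic, which forces characteristic $0$, take a place of a number field containing $t$ with $\norm{t}_v\neq 1$ (via Kronecker). Embed the transcendental part into the completion using its uncountable transcendence degree, and pass to a finite extension.
\end{itemize}

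The middle of your Step~2 should be deleted. The parenthetical claim that $N_{\mathfrak p}$ has infinite transcendence degree over $N$ ``only if we pass to $\mathbb{C}_p$'' is false. $N_{\mathfrak p}$ is uncountable and $N$ is countable, so the transcendence degree is uncountable, which is exactly what you invoke two sentences later. So there is no ``main obstacle'' there. The detours are also unusable:
\begin{itemize}
\item the Nullstellensatz specialization is ill-posed, since ``injective on $N[t]$'' is meaningless when $t\in N$;
\item the ``generic smooth point'' valuation is left unspecified;
\item a finite extension of $N_{\mathfrak p}(\!(T_1)\!)\cdots$ is not locally compact.
\end{itemize}

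In the clean version, make three points explicit:
\begin{itemize}
\item Choose $z_1,\ldots,z_s\in N_{\mathfrak p}$ (or in $\mathbb{C}$ at an archimedean place) inductively, each outside the countable algebraic closure of $N(z_1,\ldots,z_{i-1})$. This gives an embedding $N(y_1,\ldots,y_s)\hookrightarrow N_{\mathfrak p}$ extending $N\hookrightarrow N_{\mathfrak p}$.
\item Since $K$ is finite over $N(y_1,\ldots,y_s)$, this embedding extends to $K\to\overline{N_{\mathfrak p}}$. The image lies in a finite extension $L$ of $N_{\mathfrak p}$, which is again a local field.
\item $\norm{t}_L=\norm{t}_{\mathfrak p}\neq 1$ by uniqueness of the extension of the absolute value.
\end{itemize}

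Step~1 is never used and can be dropped. You can work with the number field $\mathbb{Q}(t)$ instead of $N=\overline{\mathbb{Q}}\cap K$: $K$ is still finite over $\mathbb{Q}(t)(y_1,\ldots,y_s)$. This spares you from justifying that $N$ is a number field.
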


\begin{theorem}[Tomanov~\cite{tomanov1985generalized} extended]\label{thm:no_mxd_ids_qsmpl_alg_grps}
Suppose either (i) $K=\mathbb{F}_q$ for some prime power $q$ and $F\coloneqq\overline{\mathbb{F}}_q$, or (ii) $K$ is infinite finitely generated, 
$L$ and $\norm{\bullet}\colon L\to\mathbb{R}_{\geq 0}$ are as in the conclusion 
of Lemma~\ref{lem:tits_lem}, and 
$F\coloneqq\overline{L}$. Then $G=\mathbf{G}(F)$ admits no $\Lambda(\mathbf{G}(F),\mathbf{V}(F))$-noncentral mixed identity with constants in $\GL(\mathbf{V}(F))$.
\end{theorem}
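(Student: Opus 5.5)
We argue by contradiction, following Tomanov's ping-pong style argument. Suppose $w=c_0x^{\varepsilon(1)}c_1\cdots x^{\varepsilon(l)}c_l\in\GL(\mathbf{V}(F))\ast\gensubgrp{x}$ is a $\Lambda$-noncentral mixed identity for $\mathbf{G}(F)$, where $\Lambda=\Lambda(\mathbf{G}(F),\mathbf{V}(F))$. By Lemma~2.2 of \cite{bradfordschneiderthom2023length} we may restrict to one variable. Conjugating, we may also assume $c_0=1$, and we may drop $c_l$, since $w\equiv1$ forces $c_0^{-1}c_l^{-1}$-twisted versions to vanish as well. The plan is to substitute $x=g\,t\,g^{-1}$, where $t\in\mathbf{T}(F)$ is a suitable regular "dominant" torus element and $g\in\mathbf{G}(F)$ is chosen generically. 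In case (ii) we take $t$ with $\norm{\alpha(t)}$ strictly maximal at the highest weight, using the nontrivial absolute value from Lemma~\ref{lem:tits_lem} (extended to $F=\overline{L}$). In case (i) we take $t$ to be a transcendental-free substitute: a regular unipotent element, or else a torus element over a large finite field, whose powers have dominant action on $e_1$.

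\textbf{Step 1 (genericity).} Each condition we need on $g$ has the form $((e_1.g\,c_j\,g^{-1}).e_n^\ast)\neq 0$, or a similar matrix coefficient. It therefore defines a nonempty Zariski-open subset of the irreducible group $\mathbf{G}(F)$. Nonemptiness holds precisely because $c_j\notin\Lambda$ for critical $j\in J_-(w)$. For noncritical $j$, namely $j\in J_+(w)$, no condition is needed, since powers $t^{\pm2}$ simply merge. Intersecting finitely many nonempty opens gives a common $g$. After replacing the constants by $g^{-1}c_jg$, we may take $x=t^{k}$ with $t$ diagonal in the weight basis $e_1,\ldots,e_n$, where $e_1$ is the highest weight and $e_n$ the lowest.

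\textbf{Step 2 (leading term).} We expand $w(t^k)$ in the basis and track the coefficient of $e_1\mapsto e_n$ (or $e_1\mapsto e_1$, according to the signs) as a Laurent polynomial in $k$-th powers. Each factor $t^{\pm k}$ scales $e_1$ by the extreme weight $\lambda_{\max}^{\pm k}$. The product of the nonzero matrix coefficients from Step 1 then gives a unique monomial of extremal degree. In case (ii) this means the term of largest absolute value grows like $\norm{\lambda_{\max}(t)}^{k\cdot N}$ with $N\neq0$, while all other terms are strictly smaller. In case (i) it means a unique top-degree monomial in a formal variable $T$, obtained by working over $F(T)\supseteq F$ with $t=\diag(T^{\langle\mu,\cdot\rangle})$ and specialising. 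Hence $w(t^k)\neq1$ for large $k$, contradicting the assumption that $w$ is an identity. Reducedness of $w$ ensures that consecutive $x^{\varepsilon}x^{-\varepsilon}$ only occur around nontrivial, hence by hypothesis non-T-small, constants.

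The main obstacle is Step 2 in the exact bookkeeping of the extremal monomial. Critical constants $c_j$ with $j\in J_-$ sit between $t^{k}$ and $t^{-k}$, and we must show that the "$e_1\to\ldots\to e_n$" path through them really contributes an uncancelled extreme term. This is exactly where the condition $(e_1.c^g).e_n^\ast\neq0$ enters: it gives a path from highest to lowest weight, gaining the factor $\lambda_{\max}^k\lambda_{\min}^{-k}$, which is maximal and attained uniquely. For case (i) over $\overline{\mathbb{F}}_q$, where no archimedean-type growth is available, I would first try the formal variable $T$ and a degree argument, then specialise $T$ to an element of large multiplicative order, avoiding the finitely many roots of the resulting nonzero Laurent polynomial.
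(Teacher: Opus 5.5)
Your mechanism is the one behind Tomanov's Lemma~1, which the paper cites in case (ii): conjugate a strictly dominant torus element by a generic $g$, then isolate the unique extremal term. That term comes from the path through $e_1$ at every $x^{+1}$ and through $e_n$ at every $x^{-1}$. However, Step~1 contains a false claim that breaks Step~2 as written, though it is repairable.

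\textbf{The gap in Step 1.} In a reduced one-variable word the constants $c_j$ with $j\in J_+(w)$ need not be trivial, so nothing ``merges''. Put $a_j=1$ if $\varepsilon(j)=+1$ and $a_j=n$ if $\varepsilon(j)=-1$. The coefficient of the extremal term in the $(a_1,a_l)$-entry is $\prod_{j=1}^{l-1}(g^{-1}c_jg)_{a_ja_{j+1}}$. For $j\in J_+(w)$ this product contains the factor $(e_1.g^{-1}c_jg).e_1^*$ (resp.\ $(e_n.g^{-1}c_jg).e_n^*$), which you never make nonzero.

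A concrete failure: take $\SL_2$ on its natural module and $c=\begin{pmatrix}0&1\\-1&0\end{pmatrix}$. The word $xcx$ is strict, so your Step~1 imposes nothing and allows $g=1$. But $tct=c$ for every diagonal $t$, so your substitution produces a constant and no contradiction.

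\textbf{The repair.} Impose these open conditions too, and show they are nonempty for every invertible $c$.
\begin{itemize}
\item Let $U$ be the unipotent radical of the Borel subgroup fixing $e_1$. For $u\in U$ one has $(e_1.u^{-1}cu).e_1^*=((e_1.c).u).e_1^*$.
\item The functionals $v\mapsto (v.u).e_1^*$ ($u\in U$) span $V^*$. Indeed, $Fe_1^*$ is stable under the opposite Borel $B^-$, the big cell $UB^-$ is dense in $\mathbf{G}(F)$, and $V^*$ is irreducible.
\item Hence these functionals cannot all vanish on $e_1.c\neq 0$. The same argument with the Borel fixing $e_n$ handles the $(n,n)$ condition.
\end{itemize}
Two further points should be stated. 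For $x^{-1}c_jx$ the relevant coefficient is $(e_n.c_j^g).e_1^*$; its nonvanishing for some $g$ is equivalent to $c_j\notin\Lambda$ after multiplying $g$ by a representative of the longest Weyl element. Uniqueness of the extremal monomial uses that the highest and lowest weights have multiplicity one.

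\textbf{Comparison with the paper's route.} In case (ii) your argument matches the paper, which takes the dominant element from Steinberg's lectures and cites Tomanov. In case (i) your route genuinely differs. The paper reduces (i) to (ii) in four steps:
\begin{itemize}
\item it transfers the identity from $\mathbf{G}(\overline{\mathbb{F}}_q)$ to $\mathbf{G}(\mathbb{F}_q[X,X^{-1}])$ via reduction maps onto finite fields;
\item it passes to $\mathbf{G}(\mathbb{F}_q(\!(X)\!))$ by analytic density of the root subgroups;
\item it passes to the algebraic closure by Zariski density;
\item it identifies $\Lambda$ over $\overline{L}$ with $\Lambda$ over $F$.
\end{itemize}
Your formal-variable version instead sets $x=g\chi(T)g^{-1}$ over $F[T,T^{-1}]$, with $\chi$ a strictly dominant cocharacter. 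You read off the top $T$-degree coefficient, which is positive-degree and nonzero, and specialise $T$ at any element of $F^\times$ that is not a root of the resulting nonzero Laurent polynomial.

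This is the same valuation idea, since $T$-degree is the $X$-adic valuation. But it stays over $F$, needs no local field or density arguments, and works verbatim over any infinite field, so case (ii) does not need the absolute value either. The paper's version is shorter on the page because it outsources the core computation to Tomanov.

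Drop your other case (i) suggestions, namely a regular unipotent element or a torus element over a finite field. In characteristic $p$ these have finite order, so their powers show no dominant growth; only the formal-variable version works.
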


The conclusion of Theorem~\ref{thm:no_mxd_ids_qsmpl_alg_grps} follows from the results of \cite{tomanov1985generalized} (see specifically the Proposition of \S 2 therein). 
That said we have not found the argument presented in \cite{tomanov1985generalized} 
to be entirely transparent, in the case for which $F$ is a locally finite field, 
since a key step is to construct a finitely generated Zariski dense 
subgroup of $G = \mathbf{G}(F)$ (see the first paragraph of the proof of Theorem~1 
in \cite{tomanov1985generalized}). 
As such, and owing to the centrality of this argument to our results, 
we provide here a clarifying argument.
    
\begin{proof}[Proof of Theorem~\ref{thm:no_mxd_ids_qsmpl_alg_grps}]
We start with Case~(i): $F=\overline{\mathbb{F}}_q$. Assume that $w(x)\in\GL_n(\mathbb{F}_q)\ast\gensubgrp{x}$ is such that $\im(w_G)\nsubseteq\mathbf{Z}(\GL(\mathbf{V}(F))$, where $w_G\colon G\to\GL(\mathbf{V}(F))$ is the word map on the group $G=\mathbf{G}(F)$. Then we prove that $w\colon\mathbf{G}(\mathbb{F}_q[X,X^{-1}])\to\GL(\mathbf{V}(\mathbb{F}_q[X,X^{-1}]))$ also hits a noncentral element: Write $w(x)=(p_{ij}(x))_{i,j=1}^n$ and define the polynomials $$q_{ij}\coloneqq p_{ii}-p_{jj}$$ for $i\neq j$. If $w$ was nontrivial on $\mathbf{G}(F[X,X^{-1}])$, there would be an element $g$ of this group, distinct indices $i,j\in\set{1,\ldots,n}$, and a Laurent polynomial 
$$p(g)\coloneqq p_{ij}(g)\neq 0 \quad\text{or}\quad p(g)\coloneqq q_{ij}(g)\neq 0.$$ 
But taking $d$ such that $\mathbb{F}_{q^d}\leq\overline{\mathbb{F}}_q$ is large enough, there is a quotient map $$\pi\colon\mathbb{F}_q[X,X^{-1}]\twoheadrightarrow\mathbb{F}_{q^d}$$ and $p(g).\pi\neq 0$.
Hence $w$ would not be trivial on $\mathbf{G}(\mathbb{F}_{q^d})\leq\mathbf{G}(\overline{\mathbb{F}}_q)$, contradicting the assumption.
Now $\mathbb{F}_q[X,X^{-1}]$ is dense in $L\coloneqq\mathbb{F}_q(\!(X)\!)$, which is a local field. Since $\mathbf{G}$ is generated by its root subgroups 
$\set{X_{\alpha}}[\alpha \in \Phi]$ 
(for $\Phi$ the root system associated to $\mathbf{G}$), 
we have that $\mathbf{G}(\mathbb{F}_q[X,X^{-1}])\subseteq\mathbf{G}(\mathbb{F}_q(\!(X)\!))$ is dense in the \emph{analytic} topology, i.e., the topology induced by the absolute value on $\mathbb{F}_q(\!(X)\!)$, since each $X_{\alpha} (\mathbb{F}_q[X,X^{-1}])$ 
is dense in $X_{\alpha}(\mathbb{F}_q(\!(X)\!))$ in the analytic topology. 
Since polynomials over $\mathbb{F}_q$ are continuous in the analytic topology,  $w\in\GL_n(\mathbb{F}_q)\ast\gensubgrp{x}$ is a mixed identity for $\mathbf{G}(L)$, 
and hence for $\mathbf{G}(\overline{L})$ 
(again by \cite{borel2012linear}*{Corollary~18.3}). 
We have therefore reduced Case~(i) to Case~(ii), 
with $K$ any infinite finitely generated subfield of $\mathbb{F}_q(\!(X)\!)$.

In Case~(ii), again $w\in\GL_n(\overline{L})\ast\gensubgrp{x}$ is a mixed identity for $\mathbf{G}(\overline{L})$. 
Let the action of $\mathbf{G}(\overline{L})$ on $\mathbf{V}(\overline{L})\cong \overline{L}^n$ admit an ordered weight basis $e_1,\ldots,e_n$ (where $e_1$ is the highest and $e_n$ the lowest weight vector) such that there is a semisimple element $g=\diag(\lambda_1,\ldots,\lambda_n)\in\mathbf{G}(\overline{L})$ with $\norm{\lambda_1}>\norm{\lambda_i}>\norm{\lambda_n}$ for $i\in\set{2,\ldots,n-1}$. By \cite{steinbergfaulknerwilson1967lectures}*{Theorem~39} there is such an element $g$.

Now in both cases, we can argue as in the proof of \cite{tomanov1985generalized}*{Lemma~1}: $w$ must have a critical constant from $\Lambda(\mathbf{G}(\overline{L}),\mathbf{V}(\overline{L}))\cap\GL(\mathbf{V}(F))=\Lambda(\mathbf{G}(F),\mathbf{V}(F))$. The latter equality holds, since $F$ is infinite, so that $\mathbf{G}(F)\subseteq\mathbf{G}(\overline{L})$ is Zariski dense.
\end{proof}

In the next subsection, we consider the two modules $V=\mathbf{V}(\gamma)$ for $\gamma\in\set{\alpha_0,\beta_0}$ a long or a short root of $G=\mathbf{G}(F)$. The results of Gordeev~\cite{gordeev1997freedom} help to give a reasonable description of the T-small constants in these cases.

\subsection{Computation of T-small constants}\label{subsec:cmp_sml_consts}

Recall that $F$ was an arbitrary algebraically closed overfield of $K$. Now we want to describe the set of intrinsic T-small elements $\Lambda(\mathbf{G}(F),\mathbf{V}(F))\cap\mathbf{G}(F)$ of the quasisimple group of Lie type $G=\mathbf{G}(F)$ with respect to certain irreducible modules $V=\mathbf{V}(F)$. Here we exploit the methods of Gordeev~\cite{gordeev1997freedom}.

\begin{definition}[\cite{gordeev1997freedom}*{Definition~1}]\label{def:gord_smll}		Fix a maximal torus $\mathbf{T}(F)\cong (F^\times)^d$, where $d=\rk(\mathbf{G})$ is the rank of $\mathbf{G}$. When $s\in G=\mathbf{G}(F)$ is semisimple, it can be conjugated into $\mathbf{T}(F)$, i.e.\ there is $g\in\mathbf{G}(F)$ such that $s^g\in\mathbf{T}(F)$. Call $s$ \emph{G-small}, if $s^g.\alpha=1$ for every long root $\alpha\colon\mathbf{T}(F)\to F^\times$. Collect these elements in the set $\Lambda_{\text{ss}}(\mathbf{G}(F))$.
\end{definition}

\begin{remark}
Clearly, every central element is G-small semisimple.
\end{remark}
	
\begin{definition}[\cite{gordeev1997freedom}*{Definition~2}]\label{def:sml_upts}
Let $\mathbf{G}$ have roots of two different length (i.e.\ it is not simply laced). Let $u\in G=\mathbf{G}(F)$ be a nontrivial unipotent element. 
\begin{enumerate}[(i)]
\item We say that $u$ is \emph{G-small} if $\overline{u^{\mathbf{G}(F)}}$ contains either no long or no short root elements. Collect these elements in the set $\Lambda_{\text{up}}(\mathbf{G}(F))$.
\item Such an element is defined to be of the \emph{first class} if $\overline{u^{\mathbf{G}(F)}}$ contains a long root element; and of the \emph{second class} if it contains a short root element. Collect these elements in the sets $\Lambda_{\text{up},1}(\mathbf{G}(F))$ and $\Lambda_{\text{up},2}(\mathbf{G}(F))$. 
\item An almost unipotent $c=zu$ (for $z\in\mathbf{Z}(\mathbf{G}(F))$ and $u\neq 1$ unipotent) is called \emph{G-small} of the \emph{first} or the \emph{second class} if and only if $u$ has this property.
\end{enumerate}
\end{definition}

\begin{remark}\label{rmk:conj_cls_rt_elts}
Note that, by the classification of unipotent conjugacy classes in simple algebraic groups (see \cite{spaltenstein2006classes}), if \eqref{eq:exc_cses} fails, then the Zariski closure of any unipotent conjugacy class in a simple algebraic group $G=\mathbf{G}(F)$ contains a long root element. Thus in this case, there are no G-small almost unipotents of the second class.
\end{remark}

\begin{remark}\label{rmk:lift_mxd_ids}
Also note that, when $\overline{G}$ is the simple quotient of $G=\mathbf{G}(F)$ (the adjoint group), then any mixed identity $$w=c_0 x_{i(1)}^{\varepsilon(1)}c_1\cdots c_{l-1} x_{i(l)}^{\varepsilon(l)}c_l\in\overline{G}\ast\mathbf{F}_r$$ for $\overline{G}$ lifts to a mixed identity $$\widetilde{w}=\widetilde{c}_0 x_{i(1)}^{\varepsilon(1)}\widetilde{c}_1\cdots \widetilde{c}_{l-1} x_{i(l)}^{\varepsilon(l)}\widetilde{c}_l\in G\ast\mathbf{F}_r$$ for $G$, where $\widetilde{c}_j\in G$ is a lift of $c_j\in\overline{G}$ ($j\in\set{0,\ldots,l}$), and we choose these lifts such that $\widetilde{w}(1_G)=\widetilde{c}_0\cdots\widetilde{c}_l=1_G$. Indeed, since the center $\mathbf{Z}(G)$ of $G$ is discrete and $G$ is connected, $\widetilde{w}(G^r)=\mathbf{1}$, as desired. So if we speak of a T-small (with respect to a module $V=\mathbf{V}(F)$) resp.\ G-small element $c\in\overline{G}$, we mean that any lift $\widetilde{c}\in G$ is T-small with respect to the chosen module $V=\mathbf{V}(F)$ resp. G-small. 
\end{remark}

The following lemma connects the notions of T- and G-smallness.
	
\begin{lemma}[\cite{gordeev1997freedom}*{Lemma~1}]\label{lem:gordeev_conn_sml_consts}
Let $\alpha_0$ be a long root and $\beta_0$ a short root of $\mathbf{G}$ and let $\mathbf{V}(\alpha_0)$ and $\mathbf{V}(\beta_0)$ denote the corresponding irreducible modules ($\mathbf{V}(\alpha_0)$ is the adjoint representation.) Write $c=su$ for the Jordan decomposition of the constant $c\in G=\mathbf{G}(F)$. We have the following implications:
\begin{enumerate}[(i)]
\item Whenever $c\in\Lambda(\mathbf{G}(F),\mathbf{V}(\alpha_0))\cap\mathbf{G}(F)$, it follows that $s\in\Lambda_{\text{ss}}(\mathbf{G}(F))$. Also then $u=1$ unless Condition~\eqref{eq:exc_cses} is fulfilled. 
\item If $c\in\Lambda(\mathbf{G}(F),\mathbf{V}(\beta_0))\cap\mathbf{G}(F)$, then $s\in\mathbf{Z}(\mathbf{G}(F))$. Moreover, $u\in\Lambda_{\text{up}}(\mathbf{G}(F))$ if $u\neq 1$, we assume that $\mathbf{G}$ is not simply laced, and Condition~\eqref{eq:exc_cses} is not met.
\end{enumerate}
\end{lemma}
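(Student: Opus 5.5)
We argue directly from the definition of $\Lambda(\mathbf{G}(F),\mathbf{V}(\gamma))$ for $\gamma\in\set{\alpha_0,\beta_0}$. Let $e_1$ be a highest weight vector and $e_n^\ast$ the dual lowest weight functional, so $c$ is T-small exactly when the matrix coefficient $g\mapsto (e_1.c^g).e_n^\ast$ vanishes identically on $\mathbf{G}(F)$. Equivalently, for every $g$, the vector $e_1.g c g^{-1}$ has zero component along the lowest weight space. Since the highest weight line is the unique $\mathbf{B}$-stable line, the orbit of $[e_1]$ is the closed orbit $\mathbf{G}/\mathbf{P}$ in $\mathbb{P}(V)$. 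Thus the condition says that $c$ moves no point of this minimal orbit off the hyperplane $(e_n^\ast)^{\perp}$, and the same holds after all conjugations.

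\textbf{Step 1: closure under the relevant operations.} The vanishing locus is Zariski closed and conjugation invariant. Closedness lets us pass from $c=su$ to the closure of its conjugacy class. That closure contains $s$, since $s$ lies in the closure of the class of $su$ (conjugate $u$ towards $1$ by a cocharacter centralising $s$). It also contains $s\cdot u'$ for $u'$ in the closure of the class of $u$ in $C_{\mathbf{G}}(s)$.

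\textbf{Step 2: the semisimple part.} Conjugate $s$ into $\mathbf{T}$ and compute $(e_1.s^g).e_n^\ast$ for $g=x_{-\delta}(t)$, and for products of such root elements, applied to $e_1$. For $V=\mathbf{V}(\alpha_0)$, the adjoint module, we have $e_1=X_{\alpha_0}$ and $e_n=X_{-\alpha_0}$. Expanding $\operatorname{Ad}(g s g^{-1})X_{\alpha_0}$ along the chain $\alpha_0\to$ (Cartan) $\to-\alpha_0$, the coefficient of $X_{-\alpha_0}$ is a polynomial in $t$ whose coefficients involve the factors $(\alpha(s)-1)$. Choosing a Weyl conjugate of $s$, vanishing for all $g$ forces $\alpha(s)=1$ for every long root $\alpha$, which is precisely G-smallness of $s$. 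For $\mathbf{V}(\beta_0)$, the weights are the short roots together with zero weights, and the analogous computation forces $\alpha(s)=1$ for all roots, so $s$ is central.

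\textbf{Step 3: the unipotent part.} By Step 1 we may replace $c$ by $s\,u'$ with $u'$ in the class closure of $u$, and we may multiply by central elements. In case (i), $s$ is G-small. If $u\neq1$ and \eqref{eq:exc_cses} fails, Remark~\ref{rmk:conj_cls_rt_elts} says the closure contains a long root element, so we reduce to $c=s\,x_{\alpha}(1)$ with $\alpha$ long. We then exhibit a $g$ with $(e_1.c^g).e_n^\ast\neq 0$ by direct computation in the subgroup $\mathrm{SL}_2$ attached to $\alpha_0$, acting on $\mathfrak{g}$: a long root element moves $X_{\alpha_0}$ through weights $\alpha_0,0,-\alpha_0$. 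In case (ii), $s$ is central, and we must show that $u$ is G-small, i.e.\ that the closure of the class of $u$ cannot contain both long and short root elements. It suffices to show that if it contains a short root element then the coefficient is nonzero, because a long root element $x_{\alpha}(1)$ acting on $\mathbf{V}(\beta_0)$ has $\alpha$-strings of length at most two there, so no single root element reaches the lowest weight and it is T-small. So we take a short $x_\beta(1)$ with $\beta=\beta_0$. On $\mathbf{V}(\beta_0)$ its $\beta$-string through $\beta_0$ is $\beta_0,0,-\beta_0$, and the quadratic term of $x_\beta(t)$ carries the coefficient $\binom{2}{1}$-type structure constants. These are nonzero exactly when $p\neq2$ (respectively $p\neq 3$ for $G_2$, where strings have length four).

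The main obstacle is Step 3. We need the root-string and structure-constant computation to be uniform across $B_n,C_n,F_4,G_2$, and we need to identify that it fails precisely in the characteristics of \eqref{eq:exc_cses}. Where that bookkeeping becomes heavy, we would rely on Gordeev's explicit case analysis.
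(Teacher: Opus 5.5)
The paper gives no argument of its own here; it simply quotes Gordeev's Lemma~1. Your Steps~1 and~2 are a sound way to reconstruct the statements about $s$. T-smallness is Zariski closed and stable under $\mathbf G(F)$-conjugation, and $s\in\overline{c^{\mathbf G(F)}}$. For $s\in\mathbf T(F)$, conjugating by $x_{-\alpha_0}(t)$ gives a lowest-weight coefficient that is a nonzero multiple of $(\alpha_0(s)-1)^2t^2$. The same works for $\beta_0$, once you note that the short roots generate the root lattice.

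\emph{The genuine gap is in the unipotent part of~(i).} Step~1 only lets you replace $c=su$ by $su'$ with $u'\in\overline{u^{C_{\mathbf G}(s)}}$. Remark~\ref{rmk:conj_cls_rt_elts}, however, is about closures of $\mathbf G(F)$-classes. A long root element $y\in\overline{u^{\mathbf G(F)}}$ need not commute with $s$, and then nothing places $sy$ in $\overline{c^{\mathbf G(F)}}$. This bites exactly when $s$ is a noncentral G-small element. Then $C_{\mathbf G}(s)^\circ$ is a proper subgroup, of type $D_m$, $C_k\times C_{m-k}$, $B_4$, $A_2$ for $\mathbf G=B_m,C_m,F_4,G_2$ respectively.

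You need an extra step, for instance the following:
\begin{itemize}
\item $C_{\mathbf G}(s)^\circ$ contains $\mathbf T$ and every long root subgroup.
\item After moving $u$ into a $\mathbf T$-stable maximal unipotent subgroup of $C_{\mathbf G}(s)^\circ$, a cocharacter of $\mathbf T$ degenerates it to a root element $x_\gamma(a)\in\overline{u^{C_{\mathbf G}(s)^\circ}}$ with $a\neq 0$ and $\gamma(s)=1$.
\item If $\gamma$ is short, choose a long root $\delta$ with $\langle\gamma,\delta^\vee\rangle\neq0$. The rank-two subsystem $\Phi\cap(\mathbb Z\gamma+\mathbb Z\delta)$, of type $B_2$ or $G_2$, lies in the root system of $C_{\mathbf G}(s)^\circ$.
\item In that rank-two group, the closure of the short root class contains long root elements when \eqref{eq:exc_cses} fails.
\end{itemize}
After this, your $\mathrm{SL}_2$ computation does finish~(i). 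Since $\alpha_0(s)=1$, $s$ centralizes the $\mathrm{SL}_2$ of $\alpha_0$ and fixes $v_{\pm\alpha_0}$.

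\emph{Your account of where the characteristic enters in~(ii) is also incorrect,} although the conclusion you need survives. With divided powers,
\[
x_{-\beta_0}(t)v_{\beta_0}=v_{\beta_0}+tX_{-\beta_0}v_{\beta_0}+t^2X^{(2)}_{-\beta_0}v_{\beta_0},
\]
and $X^{(2)}_{-\beta_0}v_{\beta_0}=\pm\dot s_{\beta_0}v_{\beta_0}$ spans the one-dimensional extremal weight space of $-\beta_0$. So the coefficient is $\pm t^2$ in every characteristic.

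There is no $\binom{2}{1}$-type constant. For $G_2$, the $\beta_0$-string through $\beta_0$ in $\mathbf V(\beta_0)$ has length three, not four. Read literally, your mechanism would make the coefficient vanish for $G_2$ with $p=2$, which is not a case of \eqref{eq:exc_cses}.

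The correct computation shows that short root elements are never T-small for $\mathbf V(\beta_0)$. Hence the implication in~(ii) does not actually need \eqref{eq:exc_cses} to fail. The characteristic restriction genuinely belongs to~(i): for example, short root elements of $\Sp_{2n}$ in characteristic~$2$ are T-small for $\mathbf V(\alpha_0)$. In your argument it should enter only through the class-closure step above.
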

	
\begin{remark}\label{rmk:smply_lced}
When $\mathbf{G}$ is simply laced, $\Lambda_{\rm ss}(\mathbf{G}(F))=\mathbf{Z}(\mathbf{G}(F))$, so that $G=\mathbf{G}(F)$ has no $\mathbf{Z}(\mathbf{G}(F))$-noncentral mixed identity. This means that $\overline{G}$ has no mixed identity at all (by Remark~\ref{rmk:lift_mxd_ids}).
\end{remark}
	
\begin{theorem}[\cite{gordeev1997freedom}*{Theorem~2}]\label{thm:gordeev_crit_consts}
Let $w\in G\ast\mathbf{F}_r=\mathbf{G}(F)\ast\mathbf{F}_r$ be a word with constants. Suppose that one of the following holds:
\begin{enumerate}[(i)]
\item $\mathbf{G}$ is simply laced and $w$ is $\mathbf{Z}(\mathbf{G}(F))$-noncentral;
\item $\mathbf{G}$ is not simply laced, the Condition~\eqref{eq:exc_cses} is not fulfilled, and 
$$
\Lambda_{\text{ss}}(\mathbf{G}(F))\cap I(w)=\varnothing\text{ or }\mathbf{Z}(\mathbf{G}(F))(\mathbf{1}\cup\Lambda_{\text{up}}(\mathbf{G}(F)))\cap I(w)=\varnothing;
$$
\item The Condition~\eqref{eq:exc_cses} is true and 
$$
\mathbf{Z}(\mathbf{G}(F))(\mathbf{1}\cup\Lambda_{\text{up},1}(\mathbf{G}(F)))\cap I(w)=\varnothing\text{ or }
$$ 
$$
\mathbf{Z}(\mathbf{G}(F))(\mathbf{1}\cup\Lambda_{\text{up},2}(\mathbf{G}(F)))\cap I(w)=\varnothing. 
$$
\end{enumerate}
Then $w$ is not a mixed identity for $G=\mathbf{G}(F)$.
\end{theorem}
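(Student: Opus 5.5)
The natural route is to combine Theorem~\ref{thm:no_mxd_ids_qsmpl_alg_grps} with Lemma~\ref{lem:gordeev_conn_sml_consts}. In each case we apply the Tomanov-type theorem to two modules, the adjoint module $\mathbf{V}(\alpha_0)$ and the short-root module $\mathbf{V}(\beta_0)$. First we reduce to constants defined over a finitely generated field: the constants of $w$ lie in $\mathbf{G}(K)$ for some finitely generated $K\leq F$. Then, using Zariski density of $\mathbf{G}(K')$ in $\mathbf{G}(F)$ for infinite $K'$ (\cite{borel2012linear}*{Corollary~18.3}), we may pass between $F$ and the field $\overline{\mathbb{F}}_q$ or $\overline{L}$ of Theorem~\ref{thm:no_mxd_ids_qsmpl_alg_grps}. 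Being a mixed identity is a closed polynomial condition on $\mathbf{G}$, and T-smallness $\Lambda(\mathbf{G}(F),\mathbf{V})$ is defined by polynomial identities in the conjugating element, so neither property depends on the choice of algebraically closed overfield.

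Now suppose $w$ is a mixed identity for $\mathbf{G}(F)$. Theorem~\ref{thm:no_mxd_ids_qsmpl_alg_grps} applied to $\mathbf{V}(\alpha_0)$ shows that $w$ has a critical constant $c\in\Lambda(\mathbf{G}(F),\mathbf{V}(\alpha_0))\cap\mathbf{G}(F)$. Applied to $\mathbf{V}(\beta_0)$, when $\mathbf{G}$ is not simply laced, it gives a critical constant $c'\in\Lambda(\mathbf{G}(F),\mathbf{V}(\beta_0))$. The statement then follows case by case, arguing contrapositively.

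\emph{Case (i).} In the simply laced case there is only one root length. Lemma~\ref{lem:gordeev_conn_sml_consts}(i) gives $c=su$ with $s\in\Lambda_{\rm ss}=\mathbf{Z}$ and $u=1$, since \eqref{eq:exc_cses} cannot hold. So $c$ is central, contradicting $\mathbf{Z}$-noncentrality.

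\emph{Case (ii).} Part (i) of Lemma~\ref{lem:gordeev_conn_sml_consts} gives $c=s$ with $s\in\Lambda_{\rm ss}\cap I(w)$. Part (ii) gives $c'=zu'$ with $z$ central and $u'\in\mathbf{1}\cup\Lambda_{\rm up}$. Hence both intersections are nonempty, and neither alternative in the hypothesis can hold.

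\emph{Case (iii).} Here \eqref{eq:exc_cses} holds, so Lemma~\ref{lem:gordeev_conn_sml_consts} no longer forces $u=1$ nor identifies the unipotent class. This is the main obstacle. I would first try to sharpen the lemma for these small characteristics by showing the following. If $c=su$ is T-small for $\mathbf{V}(\alpha_0)$, then $s$ is central and $u$ avoids long root elements in its closure; dually, for $\mathbf{V}(\beta_0)$ it avoids short root elements. In these characteristics the special isogeny exchanges long and short roots, and this should swap the roles of the two modules.

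To carry this out I would use Gordeev's argument: if $\overline{u^{\mathbf{G}}}$ contains a long root element $x_{\alpha_0}(1)$, then T-smallness passes to the closure, and a direct check that $e_1.x_{\alpha_0}(1)^g.e_n^\ast\neq0$ for some $g$ yields a contradiction. This puts one critical constant in $\mathbf{Z}(\mathbf{1}\cup\Lambda_{\mathrm{up},i})$ for each $i\in\{1,2\}$, which contradicts the hypothesis. The delicate point is verifying the matrix-coefficient nonvanishing for the closures in types $B_n,C_n,F_4$ with $p=2$ and $G_2$ with $p=3$. I would attempt this via the exceptional isogeny rather than by direct computation.
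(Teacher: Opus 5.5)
Your treatment of Cases (i) and (ii) is the paper's argument. The remark after the statement obtains the theorem by applying Tomanov's criterion (Theorem~\ref{thm:no_mxd_ids_qsmpl_alg_grps}) to $\mathbf{V}(\alpha_0)$ and $\mathbf{V}(\beta_0)$ and then invoking Lemma~\ref{lem:gordeev_conn_sml_consts}. Your reduction to a finitely generated field, and the transfer between algebraically closed fields by Zariski density, is a detail the paper leaves implicit.

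Case (iii) is where your proposal is only a plan. The paper itself just calls the conclusion ``immediate'' from Lemma~\ref{lem:gordeev_conn_sml_consts}, i.e.\ it defers to Gordeev, although that lemma as stated says nothing about $u$ when \eqref{eq:exc_cses} holds. You are right to spot this, and your sharpening is the right statement, but you leave it unverified and skip one ingredient. The special isogeny is not needed; the argument runs as follows.

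First, under \eqref{eq:exc_cses}, Table~\ref{tab:small-elements} gives $\Lambda_{\mathrm{ss}}(\mathbf{G}(F))=\mathbf{1}$ ($t=\pm1$ with $p=2$, resp.\ $t^3=1$ with $p=3$). Hence the T-small critical constants $c_\alpha=z_\alpha u_\alpha$ (for $\mathbf{V}(\alpha_0)$) and $c_\beta=z_\beta u_\beta$ (for $\mathbf{V}(\beta_0)$) have central semisimple parts, and their unipotent parts are themselves T-small.

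Second, for $\gamma\in\{\alpha_0,\beta_0\}$, root elements of the length of $\gamma$ are never T-small for $\mathbf{V}(\gamma)$, in any characteristic. Indeed, $\langle\gamma,\gamma^\vee\rangle=2$ and the longest Weyl group element acts by $-1$, so $x_{-\gamma}(t)v_\gamma=v_\gamma+tf_\gamma v_\gamma+t^2f^{(2)}_\gamma v_\gamma$ (divided powers). Here $f^{(2)}_\gamma v_\gamma=\pm n_\gamma v_\gamma\neq0$ spans the lowest weight space, so the relevant matrix coefficient is $\pm t^2\neq0$. Your ``delicate point'' is thus a rank-one computation. Since $\Lambda(\mathbf{G}(F),\mathbf{V})$ is Zariski closed and conjugation invariant, $\overline{u_\alpha^{\mathbf{G}(F)}}$ contains no long root elements and $\overline{u_\beta^{\mathbf{G}(F)}}$ contains no short ones.

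Third, and this you do not mention: you need to conclude $u_\alpha\in\mathbf{1}\cup\Lambda_{\mathrm{up},2}$ and $u_\beta\in\mathbf{1}\cup\Lambda_{\mathrm{up},1}$, not merely that they are G-small of neither class. For that you need that, under \eqref{eq:exc_cses}, the closure of every nontrivial unipotent class contains a long or a short root element. In other words, the two root classes are the minimal nontrivial unipotent classes (Spaltenstein's tables, as used in the proof of Theorem~\ref{thm:main_fin}(iv)).

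With these three facts, $w$ has critical constants in both $\mathbf{Z}(\mathbf{G}(F))(\mathbf{1}\cup\Lambda_{\mathrm{up},1})$ and $\mathbf{Z}(\mathbf{G}(F))(\mathbf{1}\cup\Lambda_{\mathrm{up},2})$. So both alternatives in the hypothesis of (iii) fail, as required.
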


\begin{remark}
Note that the statement of Theorem~2 in \cite{gordeev1997freedom} does not include 
the requirement that the G-small semisimple and unipotent constants appearing in $w$ must 
be \emph{critical} constants. Nevertheless, 
the proof of the result (see \S3 of \cite{gordeev1997freedom}, and especially item Equation~(3.1) therein) 
is based on the Tomanov's criterion (see Theorem~\ref{thm:no_mxd_ids_qsmpl_alg_grps} above). 
The conclusion of Theorem~\ref{thm:gordeev_crit_consts} 
is then immediate from Lemma~\ref{lem:gordeev_conn_sml_consts}.
\end{remark}

Note that Theorem~\ref{thm:gordeev_crit_consts}(i) and (ii) immediately imply the following positive answer to the main conjecture of Larsen--Shalev \cite{larsenshalev2022identities}.

\begin{corollary}\label{cor:comp_lie_grps} If $G$ is a simple compact real Lie group then every mixed identity 
$w\in G\ast\mathbf{F}_r$ for $G$ has a critical constant in $\mathbf{Z}(G)$. 
In particular, if $\mathbf{Z}(G)=\mathbf{1}$ then $G$ has no mixed identity.
\end{corollary}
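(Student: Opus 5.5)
The plan is to realise a simple compact Lie group as the group of real points of a complex simple algebraic group, and then to apply Theorem~\ref{thm:gordeev_crit_consts} over $F=\mathbb{C}$. Let $G$ be a simple compact real Lie group. It is connected, its complexification $\mathbf{G}(\mathbb{C})$ is a connected simple complex algebraic group, and $G$ is a compact real form of it. The key input is that $G$ is Zariski dense in $\mathbf{G}(\mathbb{C})$. This follows because the Lie algebra of $G$ spans $\mathrm{Lie}(\mathbf{G}(\mathbb{C}))$ over $\mathbb{C}$: the Zariski closure $H$ of $G$ then has $\mathrm{Lie}(H)\supseteq \mathrm{Lie}(G)\otimes\mathbb{C}$, so $H^\circ=\mathbf{G}(\mathbb{C})$. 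Consequently, if $w\in G\ast\mathbf{F}_r$ is a mixed identity for $G$, the word map $\mathbf{G}(\mathbb{C})^r\to\mathbf{G}(\mathbb{C})$ is a morphism of varieties vanishing on the Zariski dense subset $G^r$. Hence $w$, viewed in $\mathbf{G}(\mathbb{C})\ast\mathbf{F}_r$, is a mixed identity for $\mathbf{G}(\mathbb{C})$, and it remains nontrivial as an element of the free product since $G\le\mathbf{G}(\mathbb{C})$.

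Next, I would analyse the critical constants $c\in I(w)\subseteq G$. Every element of a compact group is semisimple in the complexification, since it lies in a compact torus. So no critical constant has a nontrivial unipotent part, and therefore $\mathbf{Z}(\mathbf{G}(\mathbb{C}))\Lambda_{\rm up}(\mathbf{G}(\mathbb{C}))\cap I(w)=\varnothing$. The case split is as follows.

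\emph{Simply laced case.} Theorem~\ref{thm:gordeev_crit_consts}(i) shows that $w$ cannot be $\mathbf{Z}(\mathbf{G}(\mathbb{C}))$-noncentral. Hence some critical constant lies in $\mathbf{Z}(\mathbf{G}(\mathbb{C}))\cap G$.

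\emph{Non-simply-laced case.} Condition~\eqref{eq:exc_cses} fails because the characteristic is $0$. Theorem~\ref{thm:gordeev_crit_consts}(ii) then forces $\mathbf{Z}(\mathbf{G}(\mathbb{C}))(\mathbf{1}\cup\Lambda_{\rm up})\cap I(w)\neq\varnothing$. Since no critical constant has a nontrivial unipotent part, the only remaining possibility is that some critical constant lies in $\mathbf{Z}(\mathbf{G}(\mathbb{C}))$.

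Finally, $\mathbf{Z}(\mathbf{G}(\mathbb{C}))\cap G\subseteq\mathbf{Z}(G)$. Conversely, the density statement gives $\mathbf{Z}(G)\subseteq\mathbf{Z}(\mathbf{G}(\mathbb{C}))$, since centralising a Zariski dense subgroup means centralising everything. So in both cases some critical constant lies in $\mathbf{Z}(G)$.

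For the second sentence, suppose $\mathbf{Z}(G)=\mathbf{1}$. Then a mixed identity would need a critical constant equal to $1$, which is impossible for a reduced word, since $j\in J_-(w)$ forces $c_j\neq 1$. This includes the case of strict words, which have no critical constants at all, so the first part already yields a contradiction. Hence $G$ has no mixed identity.

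The main obstacle is the Zariski density step, together with the bookkeeping that identifies $G$ with a subgroup of $\mathbf{G}(\mathbb{C})$ whose centre matches. I would handle it via the standard fact that a compact connected Lie group is the real points of a reductive algebraic group, and that these real points are Zariski dense. If $G$ is simple as an abstract group, as opposed to merely having a simple Lie algebra, the argument is the same, with $\mathbf{G}$ taken adjoint.
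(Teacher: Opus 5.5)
Your proposal is correct and follows the paper's proof. Both arguments pass to $\mathbf{G}(\mathbb{C})$ by Zariski density, apply Theorem~\ref{thm:gordeev_crit_consts}(i) in the simply laced case and (ii) otherwise, and use that a compact group contains no nontrivial (almost) unipotent elements. Your version is somewhat more careful than the paper's: you handle the possibility that the critical constant lying in $\mathbf{Z}(\mathbf{G}(\mathbb{C}))(\mathbf{1}\cup\Lambda_{\text{up}}(\mathbf{G}(\mathbb{C})))$ is central, which the paper skips, and you justify density via the Lie algebra rather than by citing Borel's Corollary~18.3.
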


\begin{proof}
We have that $G\leq\mathbf{G}(\mathbb{C})$ is the compact real form of $\mathbf{G}$ 
for a simple Lie type $\mathbf{G}$. By Zariski density \cite{borel2012linear}*{Corollary~18.3} every mixed identity $w\in G\ast\mathbf{F}_r$ for $G$ is also an identity for $\mathbf{G}(\mathbb{C})$, 
so Case~(i) or (ii) of Theorem~\ref{thm:gordeev_crit_consts} applies. In Case~(i) we are done. In Case~(ii) there must be a G-small almost unipotent element $c\in\mathbf{G}(\mathbb{C})$ which occurs as a critical constant in $w$. But then $c\notin G$, since $G$ is compact and hence contains no almost unipotents.
\end{proof}
	
In Table~\ref{tab:small-elements} below, we have computed the sets $\Lambda_{\text{ss}}(\mathbf{G}(F))$.
	
\begin{table}[htb]
\centering
\begin{tabular}{|c|c|c|c|}
\hline
$\mathbf{G}$ & long roots & $\Lambda_{\text{ss}}(\mathbf{G}(F))\cap\mathbf{T}(F)$ \\
\hline
$A_{n-1}=\SL_n$ & all & center\\
\hline
$B_m=\SO_{2m+1}$ & $\pm e_i \pm e_j$ ($D_m$) & $\{\diag(t,\ldots,t,1,t,\ldots,t)$\\ & ($i<j$) & $\mid t\in\set{\pm1}\}$\\
\hline
$C_m=\Sp_{2m}$ & $\pm 2e_i$ ($A_1^m$)& $\{\diag(t_1,\ldots,t_m,t_m,\ldots,t_1)$ \\ & ($i\in\set{1,\ldots,m}$) & $\mid t_i\in\set{\pm1}\}$\\
\hline
$D_m=\SO_{2m}$ & all & center\\
\hline
$G_2$ & 6 ($A_2$) & 
$\{(t,t^{-1}).h\mid t^3=1\}$,\\
& & where $h\colon (F^\times)^2\to (F^\times)^7$\\
\hline
$F_4$ & 24 ($D_4$) & $\set{(t,t,t,t).h}[t\in\set{\pm1}]$,\\
& & where $h\colon (F^\times)^4\to(F^\times)^{26}$\\
\hline
$E_6$ & all & center\\
\hline
$E_7$ & all & center\\
\hline
$E_8$ & all & center (trivial)\\
\hline
\end{tabular}
\caption{We present the sets $\Lambda_{\text{ss}}(\mathbf{G}(F))$ with respect to a lexicographically ordered weight basis of the natural module of the quasisimple group $\mathbf{G}(F)$; the map $h$ denotes a parametrization of the maximal torus.}
\label{tab:small-elements}
\end{table}

\subsection{Short mixed identities}\label{subsec:shrt_mxd_ids}

Subsequently, we will often use Table~\ref{tab:small-elements}. Fix the (twisted) linear quasisimple locally finite group $G={}^a\mathbf{G}(E)'$ for a locally finite field $E$.
In the next lemma, we determine which noncentral G-small semisimple elements lie in $G$ itself.

\begin{lemma}\label{lem:small_elts_in_fin_grps}
The following hold.
\begin{enumerate}[(i)]
\item If $\mathbf{G}\in\set{A_n,D_n,E_6,E_7,E_8}$, then $G\cap\Lambda_{\rm ss}(\mathbf{G}(\overline{\mathbb{F}}_q))=G\cap\mathbf{Z}(\mathbf{G}(\overline{\mathbb{F}}_q))\subseteq\mathbf{Z}(G)$.
\item Assume that $\mathbf{G}=C_n$ resp.\ $F_4$, so that ${}^a\mathbf{G}(E)$ is perfect. For $p\neq 2$ we must have $a=1$ in the case $\mathbf{G}=F_4$. Then it holds that $G\cap\Lambda_{\rm ss}(\mathbf{G}(\overline{\mathbb{F}}_q))\supsetneq\mathbf{Z}(G)$, since the corresponding elements in Table~\ref{tab:small-elements} all are fixed under the Frobenius. When $p=2$, \eqref{eq:exc_cses} holds and there is nothing to check. 
\item When $\mathbf{G}=B_m=\SO_{2m+1}$, we may assume that $p\neq 2$, since otherwise we are in Case~\eqref{eq:exc_cses} and there is nothing to do. We have that $G=B_m(E)'=\Omega_{2m+1}(E)$.  Then there is an element $c\in G$ which is conjugate in $\SO_{2m+1}(\overline{\mathbb{F}}_q)$ to the element 
$$
\diag(-1,\ldots,-1,1,-1,\ldots,-1)\in\SO_{2m+1}(\overline{\mathbb{F}}_q)
$$ 
from Table~\ref{tab:small-elements}, in the latter group.
\item If $\mathbf{G}=G_2$, then ${}^a\mathbf{G}(E)$ is perfect. If $p\neq 3$, then $a=1$, and we have $G\cap\Lambda_{\rm ss}(\mathbf{G}(\overline{\mathbb{F}}_q))\supsetneq\mathbf{Z}(G)=\mathbf{1}$ as above. Indeed, the element $(t,t^{-1}).h$ is conjugate to a noncentral element of order three of $G$ in $G_2(\overline{\mathbb{F}}_q)$. For $p=3$ we are in \eqref{eq:exc_cses} and there is nothing to do.
\end{enumerate}
\end{lemma}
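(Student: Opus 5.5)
The plan is to go through the four cases one at a time. In each case I would compare the explicit torus representatives of $\Lambda_{\rm ss}(\mathbf{G}(\overline{\mathbb{F}}_q))$ in Table~\ref{tab:small-elements} with the rational points of $G={}^a\mathbf{G}(E)'$. The tool throughout is Lang--Steinberg: a semisimple class of $\mathbf{G}(\overline{\mathbb{F}}_q)$ that is stable under the Frobenius $F$ defining $G$ meets ${}^a\mathbf{G}(E)$. If the centraliser of an element is connected, the class meets $\mathbf{G}^F$ in a single class; if not, the $F$-points of the class split into several classes.

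\emph{Case (i).} By Remark~\ref{rmk:smply_lced}, $\Lambda_{\rm ss}=\mathbf{Z}(\mathbf{G}(\overline{\mathbb{F}}_q))$, which gives the equality. The inclusion $G\cap\mathbf{Z}(\mathbf{G}(\overline{\mathbb{F}}_q))\subseteq\mathbf{Z}(G)$ holds because central elements of the ambient group commute with $G$.

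\emph{Cases (ii) and (iv).} For $C_n$ the elements $\diag(t_1,\dots,t_n,t_n,\dots,t_1)$ with $t_i=\pm1$ lie in the split torus of $\Sp_{2n}(\mathbb{F}_p)\le\Sp_{2n}(E)=G$, and a non-constant choice of the $t_i$ is noncentral. For $F_4$ (with $p\ne2$), the element $h(-1,-1,-1,-1)$ is a rational point of the split torus of $F_4(\mathbb{F}_p)\le F_4(E)$, and it is noncentral because $\mathbf{Z}(F_4)=1$. Here we need $a=1$: for $p\ne2$ there is no graph twist, and the Ree twist only arises for $p=2$. For $G_2$ with $p\ne3$, take $t$ a primitive cube root of unity. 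If $t\in E$, then $h(t,t^{-1})$ lies in $G_2(E)$. Otherwise, the $\mathbf{G}(\overline{\mathbb{F}}_q)$-class of $h(t,t^{-1})$ is $F$-stable, since the Frobenius sends it to $h(t^{-1},t)$, which is Weyl-conjugate to it. So by Lang--Steinberg it contains an $E$-point, which is an element of order $3$ and noncentral since $\mathbf{Z}(G_2)=1$. The same Lang--Steinberg remark covers any $C_n$ or $F_4$ element in case one prefers not to rely on split rationality. Perfectness of ${}^a\mathbf{G}(E)$ in these types is standard (Steinberg) and I would simply cite it, so that $G={}^a\mathbf{G}(E)$.

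\emph{Case (iii).} This is the one real obstacle, because $G=\Omega_{2m+1}(E)$ is a proper subgroup of $\SO_{2m+1}(E)$ and the obvious rational representative $c_0=\diag(-1,\dots,-1,1,-1,\dots,-1)$ may fail to lie in $\Omega$. The element $c_0$ is $-1$ on a $2m$-dimensional nondegenerate subspace $W$ and $+1$ on $W^\perp$. It lies in $\SO$, and its spinor norm is the discriminant of $W$ modulo squares. So I would choose the decomposition $V=W\perp W^\perp$ over $E$ so that $\mathrm{disc}(W)$ is a square. Over a finite subfield $\mathbb{F}_{q'}\le E$ both isometry types of $2m$-dimensional nondegenerate subspaces occur, namely plus and minus type, and their discriminants differ by a nonsquare. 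Hence one of them gives a reflection-type element $c$ with $-1$-eigenspace $W$ whose spinor norm is trivial, so $c\in\Omega_{2m+1}(\mathbb{F}_{q'})\le G$. Over $\overline{\mathbb{F}}_q$ all such involutions with a $2m$-dimensional $-1$-eigenspace are conjugate in $\SO_{2m+1}$, so $c$ is conjugate there to the element from Table~\ref{tab:small-elements}. The step I would check most carefully is this spinor-norm bookkeeping; as a fallback I would appeal to the fact that $\Omega$ has index $2$ in $\SO$ over a finite field and that the two $\SO(E)$-classes of such involutions are fused over $\overline{\mathbb{F}}_q$.
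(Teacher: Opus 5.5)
Your proposal is correct and essentially follows the paper's proof. For (iii) the paper also chooses the $-1$-eigenspace to have square discriminant, concretely via a diagonal basis with $f(e_i,e_i)=1$ for $i\neq m+1$, so that $c=r_{e_1}\cdots r_{e_m}r_{e_{m+2}}\cdots r_{e_{2m+1}}$ visibly has determinant and spinor norm one; for (iv) it uses the same Lang--Steinberg criterion together with the Weyl-conjugacy of $(t^{-1},t).h$ and $(t,t^{-1}).h$, and it treats (i) and (ii) as immediate, as you do.
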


\begin{proof}
Only (iii) and (iv) needs an explanation.

Regarding (iii), we may assume that $E=\mathbb{F}_q$ is itself finite by passing to a finite subfield. Choose a basis $e_1,\ldots,e_{2m+1}$ of $V=\mathbb{F}_q^{2m+1}$ such that $f(e_i,e_i)=1$ for all $i\in\set{1,\ldots,m,m+2,\ldots,2m+1}$ and $f(e_{m+1},e_{m+1})=\beta$, where $\beta$ is either $1$ or a fixed nonsquare $\alpha$. Then the element $c$ from above can be written as $r_{e_1}\cdots r_{e_m} r_{e_{m+2}}\cdots r_{e_{2m+1}}$, witnessing that it has determinant and spinor norm one. Here $r_v$ is the reflection with respect to the anisotropic vector $v\in V$.

For (iv) we recall that a semisimple element $s\in\mathbf{G}(\overline{\mathbb{F}}_q)$ is conjugate to an element in $\mathbf{G}(q)$ if and only if $s$ is conjugate to $s^\alpha$, where $\alpha\colon x\mapsto x^q$ is the standard Frobenius. But $s^\alpha=((t,t^{-1}).h)^\alpha=(t^q,t^{-q}).h$.
When $q\equiv 1$ modulo $3$, then the latter is equal to $(t,t^{-1}).h$ as desired.
If $q\equiv -1$ modulo $3$, then $(t^q,t^{-q}).h=(t^{-1},t).h$ which is conjugate to $(t,t^{-1}).h$ by an element of the Weyl group.
\end{proof}

In \cite{tomanov1985generalized} Tomanov gave examples of short mixed identities for the types $B_n$ and $C_n$ containing T-small critical constants with respect to the natural modules. It turned out that this construction can be generalized to the setting of linear simple algebraic groups. Indeed, Lemma~\ref{lem:small_elts_in_fin_grps} gives us enough G-small semisimple constants to construct Gordeev's mixed identities (since long and short root elements are always contained in $G$, which play the role of G-small unipotent elements of first and second class):
	
\begin{theorem}[Gordeev~\cite{gordeev1997freedom}*{Theorem~3}]\label{thm:gordeev_short_ids}
Assume that $\mathbf{G}$ (the adjoint type) is not simply laced, i.e.\ of type $B_n$, $C_n$, $F_4$ or $G_2$.
Then $G={}^a\mathbf{G}(E)'$ has a mixed identity $w$ with constants in $G$. The constants of $w$ may be taken to consist of \emph{any} G-small 
semisimple element of $G$; any long root element of $G$ 
and, in Case~\eqref{eq:exc_cses} only, any short root element of $G$.
\end{theorem}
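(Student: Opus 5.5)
The plan is to build Gordeev's identity explicitly from a commutator trick that isolates a subgroup on which a small constant acts in a controlled way. Let $s\in G$ be a noncentral G-small semisimple element; such elements exist by Lemma~\ref{lem:small_elts_in_fin_grps}. After conjugation we may place $s$ in $\mathbf{T}(\overline{\mathbb{F}}_q)$ with $\alpha(s)=1$ for every long root $\alpha$. Hence $s$ centralizes every long root subgroup $X_\alpha$. Since the long root elements form a single conjugacy class, $s$ commutes with every conjugate of a fixed long root element $u\in X_{\alpha_0}(E)\leq G$ that lies in $X_\alpha$ for some long root $\alpha$ of that torus. For an arbitrary $g\in G$, however, $u^g$ is only a long root element for the torus $\mathbf{T}^{g}$, so a direct commutator does not vanish. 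Instead I would use the structure of long root subgroups. For two long root elements $a,b$, the group $\langle a,b\rangle$ is either abelian, or $b$ lies in a subgroup isomorphic to a unipotent radical (Heisenberg-type) relative to $a$, or $\langle a,b\rangle\cong \SL_2$ modulo centre. This trichotomy is the standard one for long root subgroups (abstract root subgroups in the sense of Timmesfeld). In every case the bounded-degree relations of these small groups produce a bounded-length word in $u$ and $u^{x}$ that vanishes identically. Gordeev's shape is a nested commutator of the form
\[
w(x)=\bigl[\,[s,\,[u,u^{x}]^{N_1}\ldots]\,,\ \ldots\bigr],
\]
where the inner piece is chosen to land, for every substitution, in a subgroup centralized by $s$ or by a conjugate of $s$.

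The key steps, in order, are as follows. First, classify the possible pairs $(X_{\alpha_0},X_{\alpha_0}^{g})$ of long root subgroups up to $G$-conjugacy via the Bruhat decomposition. The double cosets $P\backslash G/P$ for the long-root parabolic $P$ correspond to the $W$-orbits on pairs of long roots, so the relevant cases are the angles between long roots: equal, orthogonal, $60^\circ$, $120^\circ$, and opposite. Second, for each case compute $\langle u,u^g\rangle$. It is abelian or a product of commuting root subgroups in the orthogonal and $60^\circ$ cases, and it lies in the $\SL_2$ attached to the long root $\alpha$ in the opposite case. Third, choose an element of $\langle u,u^g\rangle$, given by a fixed word $v(x)=[u,u^x]$ or a short power of it, whose image lies in a long-root $\SL_2$, or in a product of long root groups, in every case. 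The point is that the subgroup generated by long root subgroups for a closed subsystem of long roots is centralized by the G-small $s$, because $s$ kills all long roots. This yields $[s^{h},v(x)]=1$ for a suitable conjugate, but still only relative to a torus containing the relevant roots. Fourth, handle the dependence of that torus on $x$ by noting that $v(x)$ always lies in a conjugate of the subgroup $M$ generated by the long root subgroups. By Table~\ref{tab:small-elements}, $M$ is the $A_1^m$, $D_m$, $D_4$ or $A_2$ subsystem subgroup respectively. Then $s$ is central in $M$, and in fact the conjugates of $s$ by $M$ are $s$ itself, so $s^{g}$ for any $g$ with $M^g\ni v(x)$ commutes with $v(x)$. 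To avoid needing $s^g$, one must make the word self-correcting. The plan for this is to replace the constant $s$ by the word $s\,v(x)\,s^{-1}$ and use that $\langle u^{x}\rangle$ and $s u^{x}s^{-1}$ again generate a group of the above trichotomy type. Alternatively, take $w(x)=[\,s v(x) s^{-1},\,v(x)\,]$ iterated, showing it always lies in a solvable group of bounded derived length, and close with a bounded-length law of that solvable group.

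In Case~\eqref{eq:exc_cses}, the exceptional isogeny interchanges long and short roots. The same construction then works with a short root element in place of $s$ or of $u$, which explains the extra allowed constants in the statement.

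The hardest step is making the vanishing independent of the substitution, i.e. the fourth step: guaranteeing that whatever $x$ is, the subgroup generated by the relevant conjugates of $u$ and $s$ is contained in a group satisfying a fixed law. I would attack it first via the rank-one reduction, since the pair $u,u^x$ generates a subgroup of the Levi-type group $\SL_2$ or of a unipotent group, and $s$ acts trivially on long root subgroups. If the direct argument fails, the fallback is to run Gordeev's original proof (\S3 of \cite{gordeev1997freedom}) verbatim over $\overline{\mathbb{F}}_q$ and then restrict to $G\leq\mathbf{G}(\overline{\mathbb{F}}_q)$. This is legitimate because the constants lie in $G$ by Lemma~\ref{lem:small_elts_in_fin_grps}, and because the root elements of $\mathbf{G}(E)$ lie in $G$.
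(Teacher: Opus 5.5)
Your fallback is exactly what the paper does: run Gordeev's construction in $\mathbf{G}(\overline{\mathbb{F}}_q)$ and restrict it to $G$, the constants lying in $G$ by Lemma~\ref{lem:small_elts_in_fin_grps} and because root elements lie in $G$. The paper gives no independent proof, only the citation plus this check. That check tacitly needs $a=1$: for the Suzuki and Ree groups the ambient root elements are not in $G$, cf.\ Theorem~\ref{thm:main_slfg}(v). So your proposal stands only through that fallback. The direct construction you lead with has a genuine gap, and one of its steps is false.

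In your third paragraph you claim that, in every case of the trichotomy, a bounded-length word in $u$ and $u^{x}$ vanishes identically. This fails in the opposite case. There $\langle u,u^{x}\rangle$ lies in a long-root $\SL_2$, and for infinite $E$ these groups include $\SL_2$ over finite fields of unbounded size, which satisfy no common law. More decisively, a word in $u$ and $u^x$ alone has only powers of $u$ as critical constants. None of these is G-small semisimple, and all are of the first class. So by Theorem~\ref{thm:gordeev_crit_consts}(ii) resp.\ (iii) it is never a mixed identity of $\mathbf{G}(\overline{\mathbb{F}}_q)$. For the same reason your closing step, that an iterate of $[s v(x)s^{-1},v(x)]$ lies in a solvable group of bounded derived length, is unsupported. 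Nothing in your argument excludes the opposite configuration, and excluding it is the whole point.

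The missing idea is to use G-smallness to control the relative position of an arbitrary long root subgroup $X$ and its conjugate $X^s$, not to make $s$ centralise something (which forces you to chase a torus depending on $x$). What is needed is that $X$ and $X^s$ are never opposite. Then $\langle u^x,(u^x)^s\rangle$ is abelian or special (nilpotent of class at most two) for every substitution, and a fixed commutator such as $[[u^x,(u^x)^s],u^x]$ vanishes.

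In the classical cases this is a short computation, and in fact $X$ and $X^s$ commute. This gives the singular identity $w(x)=[x^{-1}ux,\,s^{-1}x^{-1}uxs]$ of length eight, with critical constants $u^{\pm1},s^{\pm1}$.
\begin{itemize}
\item For $C_m$, let $u$ be the transvection along $e$, so $u^x$ is the transvection along $e.x$. A noncentral G-small $s$ is an involution of $\Sp(V,f)$, hence $V=V_+\perp V_-$ and $f(y,y.s)=0$ for all $y\in V$. Therefore the transvection groups along $e.x$ and $e.xs$ commute.
\item For $B_m$ with $s=-1_W\oplus 1_{\langle e_0\rangle}$, every totally singular plane $P$ meets the hyperplane $e_0^\perp=W$. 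Hence $P\cap P.s\neq\mathbf{0}$, and the Siegel (long root) subgroups of two totally singular planes with nonzero intersection commute.
\item In case~\eqref{eq:exc_cses} a short root element $r$ plays the role of $s$. For example, in $\Sp_{2m}$ in characteristic $2$, $y.r=y+f(y,b)c+f(y,c)b$ with $b\perp c$ gives $f(y,y.r)=2f(y,b)f(y,c)=0$.
\end{itemize}
For $F_4$ and $G_2$ this is where Gordeev's analysis is actually needed.
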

	
From Theorem~\ref{thm:gordeev_short_ids} we deduce the first part of Theorem~\ref{thm:main_slfg}(iii).


\subsection{Finite groups of Lie type}

The statements of Theorem~\ref{thm:main_fin} can be deduced directly from the following Schwartz--Zippel estimate of Breuillard--Green--Guralnick--Tao \cite{breuillardgreenguralnicktao2015expansion}*{Proposition~5.4}. Recall that the (twisted) finite group ${}^a\mathbf{G}(q)$ (where $a\in\set{1,2,3}$) is defined to be the group of fixed points of a group automorphism $\delta\in\Aut(\mathbf{G}(\overline{\mathbb{F}}_q))$, so that $\delta^a$ is the $q$-Frobenius.

\begin{proposition} \label{BreGreGurTaoProp}
Let $\mathbf{G}(\overline{\mathbb{F}}_q)\leq\GL_n(\overline{\mathbb{F}}_q)$ be a connected simple linear algebraic group and set $G\coloneqq{}^a\mathbf{G}(q)$. Then let
$$G'=[G,G]\leq G={}^a\mathbf{G}(q)=\mathbf{G}(\overline{\mathbb{F}}_q)_\delta=\fix(\delta)\leq\mathbf{G}(\overline{\mathbb{F}}_q)$$ 
be a finite quasisimple subgroup. Let $p\colon\mathbf{M}_n(\overline{\mathbb{F}}_q)\rightarrow\overline{\mathbb{F}}_q$ be a polynomial which does not vanish identically on $\mathbf{G}(\overline{\mathbb{F}}_q)$. Set $V_{\mathbb{F}_q}(p)=\set{g \in \mathbf{M}_n(\overline{\mathbb{F}}_q)}[p(g)=0]$ 
and: 
$$
v(p,G')\coloneqq\frac{\card{V_{\mathbb{F}_q}(p)\cap G'}}{\card{G'}}.
$$
Then we have the Schwartz--Zippel bound $v(p,G')=O_{\mathbf{G}}(\deg(p)q^{-1/a})$.
\end{proposition}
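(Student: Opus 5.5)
The plan is to reduce to the classical Schwartz--Zippel lemma on an affine space. I would use a polynomial parametrisation of a ``large'' piece of $G'$ by root elements and average over left translates. Write $\mathbf{G}=\mathbf{G}(\overline{\mathbb{F}}_q)$.

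\emph{Step 1 (parametrisation).} Since $\mathbf{G}$ is generated by its root subgroups $X_\alpha\cong\mathbb{G}_a$, a standard result (Borel, generation of connected groups by irreducible subvarieties through $1$) gives a fixed sequence $\alpha_1,\dots,\alpha_M$ of roots, with $M$ depending only on $\mathbf{G}$, such that
$$\varphi\colon \mathbb{A}^M\to\mathbf{G},\qquad (t_1,\ldots,t_M)\mapsto x_{\alpha_1}(t_1)\cdots x_{\alpha_M}(t_M)$$
is dominant. Each $x_\alpha(t)$ has matrix entries that are polynomials in $t$ of degree bounded in terms of $\mathbf{G}$ and the embedding into $\GL_n$. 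Hence for every $h\in\mathbf{G}$ the function $t\mapsto p(h\varphi(t))$ is a polynomial in $M$ variables of degree $O_{\mathbf{G}}(\deg p)$. It is not identically zero: $h\varphi(\mathbb{A}^M)$ is dense in $\mathbf{G}$, and $p$ does not vanish on $\mathbf{G}$.

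\emph{Step 2 (averaging, untwisted case $a=1$).} The elements $x_\alpha(t)$ with $t\in\mathbb{F}_q$ are root elements of $\mathbf{G}(q)$ and lie in $G'$. So draw $h$ uniformly from $G'$ and, independently, $t$ uniformly from $\mathbb{F}_q^M$. Then $x=h\varphi(t)$ is uniformly distributed on $G'$, since left translation by the fixed element $\varphi(t)\in G'$ preserves Haar measure. This gives
$$v(p,G')=\mathbb{P}(p(x)=0)=\mathbb{E}_h\,\mathbb{P}_t\bigl(p(h\varphi(t))=0\bigr)\le \frac{O_{\mathbf{G}}(\deg p)}{q}$$
by the classical Schwartz--Zippel bound over $\mathbb{F}_q$ applied for each fixed $h$. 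For Steinberg groups ${}^2A_n,{}^2D_n,{}^3D_4,{}^2E_6$, I would run the same argument with the root subgroups of $\fix(\delta)$. Each such root subgroup contains a one-parameter subgroup $t\mapsto x_\alpha(t)x_{\alpha^\delta}(t^{\delta})\cdots$, parametrised over the fixed subfield of the field automorphism part of $\delta$. The map $t\mapsto t^q$ is additive, so after passing to $\mathbb{F}_q$-coordinates on $\mathbb{F}_{q^a}$ we again get polynomials of bounded degree in $\mathbb{F}_q$-variables. The density claim in Step 1 must then be checked for the twisted family, which holds because $\fix(\delta)$ is Zariski dense in $\mathbf{G}$ (Steinberg). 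This gives the bound $O(\deg p\, q^{-1})$, which is even stronger than $q^{-1/a}$.

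\emph{Step 3 (Suzuki and Ree groups; main obstacle).} Here the root subgroups of ${}^2B_2,{}^2G_2,{}^2F_4$ are parametrised by expressions involving $t^{\theta}$ with $\theta^2=p\,q$ ($p=2,3$). A naive parametrisation therefore has degree about $q^{1/2}$, which destroys the bound. I would try first to treat $\delta$ as a ``square root of Frobenius''. Write $V=\{p=0\}\cap\mathbf{G}$, a proper subvariety of dimension $\dim\mathbf{G}-1$ whose degree is $O(\deg p)$ by B\'ezout. Then count $\card{V\cap\fix(\delta)}$ with a twisted Lang--Weil estimate (Hrushovski), or via the Deligne--Lusztig style bound $\card{Y\cap\fix(\delta)}=O_{\deg Y}(q^{\dim Y/a})$ for subvarieties $Y$ of $\mathbf{G}$. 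Comparing with $\card{\fix(\delta)}\asymp q^{\dim\mathbf{G}/a}$ gives the factor $q^{-1/a}$, with the implied constant linear in $\deg p$ after a stratification argument. To secure linear dependence on $\deg p$, I would alternatively apply Step 2's averaging in one variable. Restrict to a one-parameter family $s\mapsto h\,x(s)$ inside a root subgroup of $\fix(\delta)$ whose coordinates involve only $s$ and $s^{\theta}$. Treat $(s,s^\theta)$ as two independent coordinates and use the bound on the number of $\mathbb{F}_q$-points on a curve $\{(s,s^\theta)\}\cap\{P=0\}$ of degree $O(\deg p)$. By Bombieri--Stepanov type point counting this is $O(\deg p\cdot q^{1/2})$ out of $q$ points, giving the required $q^{-1/2}$. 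This curve-counting step is where I expect the real difficulty to lie.
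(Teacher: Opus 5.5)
Your Steps~1--2 are correct and take a different route from the paper. The paper proves only the untwisted and Steinberg cases itself (Appendix~B); the full statement is quoted from Breuillard--Green--Guralnick--Tao. Its argument bounds the $\mathbb{F}_q$-points of $\{p=0\}\cap\mathbf{G}(\overline{\mathbb{F}}_q)$ by $\deg(p)\deg(\mathbf{G})q^{\dim\mathbf{G}-1}$, using B\'ezout and $\card{V(\mathbb{F}_q)}\le\deg(V)q^{\dim V}$. It then divides by $\card{G}\asymp q^{\dim\mathbf{G}}$ and uses $[G:G']=O(1)$, after a Weil restriction in the Steinberg case.

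Your averaging over translates of a dense root-subgroup parametrisation needs only the affine Schwartz--Zippel lemma and works directly in $G'$. The cost is that the (twisted) root elements must lie in $G'$ (true for large $q$, and bounded $q$ is trivial), and that dominance must be checked. In the twisted case, dominance holds because the Weil-restricted parametrisation sweeps out the full products $X_\alpha X_{\sigma\alpha}\cdots$ over $\overline{\mathbb{F}}_q$, since the Moore matrix $(e_l^{q^k})$ is invertible. It does not follow from ``density'' of $\fix(\delta)$, which is a finite set.

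Also, in the proposition's normalisation ($\delta^a$ is the $q$-Frobenius), your Steinberg bound is exactly $O(\deg(p)q^{-1/a})$, not something stronger. A bound $q^{-1}$ is false there: on $G={}^2A_2(q)$ with the antidiagonal Hermitian form, $X_{11}-(X_{11}X_{22}-X_{12}X_{21})$ restricts to $g_{11}-\overline{g_{11}}$ and vanishes on a proportion $\asymp q^{-1/2}$ of $G$.

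The genuine gap is Step~3: it does not prove the Suzuki--Ree case.
\begin{itemize}
\item Route (a) yields only $O_{\deg p}(q^{-1/2})$ with unspecified dependence on $\deg p$. Linear dependence is what the application needs, since there $\deg p\le(n-1)l$ with $l$ up to a small multiple of $q^{1/a}$. The ``stratification argument'' is not supplied.
\item Route (b) discards the density on which Step~2 rests. A root subgroup $X$ is far from dense, so for every $h$ with $p$ vanishing on $h\overline{X}$ the restricted polynomial is identically zero. Nothing bounds the proportion of such $h$ in $G'$; that is the original problem again.
\end{itemize}
Bombieri--Stepanov is beside the point: a nonzero $P(s,s^\theta)$ has at most $O(\theta\deg p)$ roots trivially.

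In fact the obstacle you identify is not there. Degree $\asymp q^{1/2}\deg p$ with variables ranging over $\mathbb{F}_q$ gives exactly $q^{-1/2}$, so Step~2 works verbatim, as follows.
\begin{enumerate}
\item Write elements of $\mathbf{U}^\delta$ and $(\mathbf{U}^-)^\delta$ as products of root elements of $\mathbf{G}$ whose coordinates are bounded-degree polynomials in parameters $s_i$ and $s_i^\theta$. Replace each $s_i^\theta$ by a new variable $s_i'$.
\item In the standard parametrisations these maps are triangular in the root coordinates, hence isomorphisms onto $\mathbf{U}$ resp.\ $\mathbf{U}^-$. So a bounded alternating product $\Psi$ is dominant, and its values at $s'=s^\theta$, $s\in\mathbb{F}_q^N$, lie in $G=G'$ for large $q$.
\item Hence $P_h(s,s')\coloneqq p(h\Psi(s,s'))$ is a nonzero polynomial of degree at most $C\deg p$.
\item If $C\deg p<\theta$, substituting $s_i'=s_i^\theta$ sends distinct monomials to distinct monomials, since $s^a(s')^b\mapsto s^{a+\theta b}$ with $a<\theta$. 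So $P_h(s,s^\theta)$ is nonzero of degree at most $C\theta\deg p$.
\item Schwartz--Zippel over $\mathbb{F}_q$ then gives $O(\theta\deg(p)/q)=O(\deg(p)q^{-1/2})$, since $\theta\asymp q^{1/2}$. If $C\deg p\ge\theta$, the claim is trivial.
\end{enumerate}
This gives an elementary proof of the Suzuki--Ree case, which the paper only cites.
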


\begin{proof}[Proof of Theorem~\ref{thm:main_fin}]
Let $\widetilde{G}\leq\mathbf{\widetilde{G}}(\overline{\mathbb{F}}_q)\leq\SL(\mathbf{V}(\overline{\mathbb{F}}_q))$ be linear quasisimple groups covering the simple groups $G\leq\mathbf{G}(\overline{\mathbb{F}}_q)$, with $\mathbf{V}$ the natural module for $\widetilde{G}$. 
Then the action of $\widetilde{G}$ on $\mathbf{V}(\overline{\mathbb{F}}_q)$ 
is irreducible, so that $\mathbf{Z}(\widetilde{G})$ acts by scalar matrices (by Schur's Lemma).
Choose $\widetilde{w}\in\mathbf{\widetilde{G}}(\overline{\mathbb{F}}_q)\ast\gensubgrp{x}$ of length $\norm{\widetilde{w}}=l=O_{\mathbf{G}}(q^{1/a})$ with a small enough implicit constant such that it descends to a word $w\in\mathbf{G}(\overline{\mathbb{F}}_q)\ast\gensubgrp{x}$ which maps $G$ to the identity, but is nontrivial on $\mathbf{G}(\overline{\mathbb{F}}_q)$. Let $\widetilde{w}'(x,y)\in\mathbf{\widetilde{G}}(\overline{\mathbb{F}}_q)\ast\gensubgrp{x,y}$ be the word obtained from $\widetilde{w}$ by substituting $y$ for every occurrence of $x^{-1}$. Consider the polynomial map $g\mapsto \widetilde{w}'(g,\adj(g)^\top)\eqqcolon(p_{ij}(g))_{i,j=1}^n$. Define $q_{ij}\coloneqq p_{ii}-p_{jj}$ for $i\neq j$. By assumption, there must be $i\neq j$ such that $p\coloneqq p_{ij}$ or $p\coloneqq q_{ij}$ is nonzero. 
Then by Proposition \ref{BreGreGurTaoProp}, applied to the polynomial $p\in\overline{\mathbb{F}}_q[X_{ij}]$, we have that $p$ does not vanish on the finite quasisimple group $\widetilde{G}$, when $\deg(p)=O_{\mathbf{G}}(q^{1/a})$, with sufficiently small implicit constant. As by definition $\deg(p)\leq (n-1)l$, this is fulfilled since by assumption 
$l=O_{\mathbf{G}}(q^{1/a})$ (again, with sufficiently small implicit constant), 
giving the desired contradiction.

We have just shown that, when $w\in\mathbf{G}(\overline{\mathbb{F}}_q)\ast\gensubgrp{x}$ is a mixed identity of length $O_{\mathbf{G}}(q^{1/a})$ for $G$, then it is also a mixed identity for $\mathbf{G}(\overline{\mathbb{F}}_q)$ and hence it must contain G-small critical constants as described in Theorem~\ref{thm:gordeev_crit_consts}.

In Cases~(i) and (iii) of Theorem~\ref{thm:main_fin}, $\mathbf{G}$ is simply laced and hence all G-small semisimple elements are central (cf.\ Table~\ref{tab:small-elements}) and hence trivial as $\mathbf{G}(\overline{\mathbb{F}}_q)$ is simple. Hence by Theorem~\ref{thm:gordeev_crit_consts}(i) $G$ has no mixed identities with constants in $\mathbf{G}(\overline{\mathbb{F}}_q)$ of length $O_{\mathbf{G}}(q)$ (provided the implicit constant is sufficiently small).

In Case~(ii) and (iv) of Theorem~\ref{thm:main_fin} there are mixed identities $w\in\mathbf{G}(\overline{\mathbb{F}}_q)\ast\gensubgrp{x}$ for $G$ of bounded length by Theorem~\ref{thm:gordeev_short_ids}.

In Case~(ii) the constants of $w$ can be taken from $G$ itself; see Theorem~\ref{thm:gordeev_short_ids}.
The stipulations on the constants in the cases for which \eqref{eq:exc_cses} holds or does not hold follow from Theorem \ref{thm:gordeev_crit_consts}. 

In Case~(iv), Condition \eqref{eq:exc_cses} applies. By Case~(ii), $w$ has G-small almost unipotent critical constants of both classes. 
It therefore suffices to show that $G$ itself contains 
no G-small unipotent elements of $\mathbf{G}(\overline{\mathbb{F}}_q)$. 
If $C$ is a conjugacy class in $\mathbf{G}(\overline{\mathbb{F}}_q)$, 
then the Zariski closure $\overline{C}$ of $C\subseteq\mathbf{G}(\overline{\mathbb{F}}_q)$ 
is invariant under $\mathbf{G}(\overline{\mathbb{F}}_q)$-conjugacy, so is a union 
of $\mathbf{G}(\overline{\mathbb{F}}_q)$-conjugacy classes. 
We may therefore define a partial order $\preceq$ 
on the set of $\mathbf{G}(\overline{\mathbb{F}}_q)$-conjugacy classes, 
where $C_1\preceq C_2$ 
if $C_1\subseteq\overline{C}_2$. 
We refer to the tables in \cite{spaltenstein2006classes}, 
where computations are made of the structure 
of this poset, when restricted to unipotent classes. 
Specifically, we are interested in Item ``$(b_6)$'' 
on p.\ 148 (which corresponds to $\mathbf{G}=G_2$); 
``$\Sp_4,p=2$'' on p.\ 233 
(corresponding to $\mathbf{G}=B_2$); 
and ``$X^G \; (p=2)$'' on p.\ 250 
(corresponding to $\mathbf{G}=F_4$). 
In all three cases, there is exactly 
one class each of long root elements $C_{\rm l}$ 
and short root elements $C_{\rm s}$ 
(and these classes are distinct). 
Both appear immediately above the unique 
$\preceq$-minimal class (which is the trivial class $\mathbf{1}$), 
and every other class $C\neq\mathbf{1}$ satisfies $C_{\rm l}\preceq C$ or $C_{\rm s}\preceq C$. 
Assume now that $u\in G$ is G-small unipotent of the first resp.\ second class. Then $C_{\rm l}\preceq u^{\mathbf{G}(\overline{\mathbb{F}}_q)}$ resp.\  $C_{\rm s}\preceq u^{\mathbf{G}(\overline{\mathbb{F}}_q)}$.
But $u\in\fix(\delta)$ and $\preceq$ is $\delta$-invariant, so that then $C_{\rm l}^\delta=C_{\rm s}\preceq(u^{\mathbf{G}(\overline{\mathbb{F}}_q)})^\delta=u^{\mathbf{G}(\overline{\mathbb{F}}_q)}$ resp.\ $C_{\rm s}^\delta=C_{\rm l}\preceq(u^{\mathbf{G}(\overline{\mathbb{F}}_q)})^\delta=u^{\mathbf{G}(\overline{\mathbb{F}}_q)}$. Hence, in both cases, we have $C_{\rm l},C_{\rm s}\preceq u^{\mathbf{G}(\overline{\mathbb{F}}_q)}$, so $u$ cannot be G-small of either class, as desired.
\end{proof}

\begin{remark}
Note that the bounds in Theorem~\ref{thm:main_fin} still depend on the dimension $n$ of the chosen module, which depends on the simple algebraic group $\mathbf{G}$. However, when $\mathbf{G}$ is exceptional, then $n$ is bounded and we obtain a lower bound $l=\Omega(q^{1/a})$ involving only an absolute constant. For the classical finite simple groups of Lie type, we presented some rank independent bounds in \cite{bradfordschneiderthom2023length}.
\end{remark}

\subsection{Infinite groups of Lie type}

Next, we deduce Theorem~\ref{thm:main_slfg}(ii)--(v).
Let $G={}^a\mathbf{G}(E)'$ be a (twisted) simple locally finite group over an infinite field $E\leq\overline{\mathbb{F}}_q$. If $w\in\mathbf{G}(\overline{\mathbb{F}}_q)\ast\gensubgrp{x}$ is a mixed identity for $G$, by Zariski density, it is also an identity for $\mathbf{G}(\overline{\mathbb{F}}_q)$. Hence Table~\ref{tab:small-elements} implies that Theorem~\ref{thm:main_slfg}(ii) and (iv) hold as there are no non-central G-small semisimple elements in this case. In Case~(iii) and (v) there are mixed identities for $G$ with G-small constants in $G$ itself by Theorem~\ref{thm:gordeev_short_ids}; as described in Theorem~\ref{thm:gordeev_crit_consts}.
In Case~(v) all the G-small almost unipotent elements lie outside the finite subgroups ${}^a\mathbf{G}(q)'$, which exhaust $G$, so that none of these constants can be contained in $G$.

\section{Nonlinear simple locally finite groups}\label{sec:nonlin_simple_loc_fin_grps}

It remains only to prove Theorem~\ref{thm:main_slfg}(i).

\subsection{Absence of nonsingular mixed identities}
	
In this subsection we establish that, in the \emph{nonlinear case}, the group $G$ has no nonsingular mixed identities (which is Theorem~\ref{thm:main_slfg}(i)). We start with some basic definitions and facts.
	
\begin{definition}\label{def:sec_cov}
Let $G$ be a locally finite group. A \emph{sectional cover} of $G$ is a family of pairs $(G_i,N_i)_{i\in I}$ indexed by the set $I$ such that
\begin{enumerate}[(i)]
\item $G_i$ is a finite subgroup of $G$ for all $i\in I$;
\item $N_i$ is a normal subgroup of $G_i$ for all $i\in I$, such that
\item for every finite subgroup $F\subseteq G$, there exists an $i\in I$, such that $F\subseteq G_i$ and $F\cap N_i=\mathbf{1}$.
\end{enumerate}
For $i\in I$, we call $G_i/N_i$ the \emph{factors} of the sectional cover. The sectional cover is called \emph{Kegel cover} if all of its factors are simple.
\end{definition}
	
The following result is classical and due to Kegel. For details, see \cite{hartley1995simple}*{Corollary 2.5}:
	
\begin{theorem}[Kegel, \cite{kegel}]\label{thm:kegel}
Every simple locally finite group admits a Kegel cover. 
\end{theorem}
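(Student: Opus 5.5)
The plan is to construct the Kegel cover directly from a local system of finite subgroups, using simplicity of $G$ to make the normal sections avoid any prescribed finite subgroup.

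\emph{Local system.} Since $G$ is locally finite, the finite subgroups of $G$ form a directed local system. Fix a finite subgroup $F\le G$. It suffices to find a finite subgroup $H\supseteq F$ and a maximal normal subgroup $M\trianglelefteq H$ with $F\cap M=\mathbf{1}$. Then $(H,M)$ is a section with simple factor $H/M$, and indexing such pairs by the finite subgroups $F$ yields a Kegel cover. (If $G$ is finite, it is simple, and $(G,\mathbf{1})$ already works.)

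\emph{Key step: choosing $H$.} Assume $G$ is infinite and $F\neq\mathbf{1}$. For each $1\neq f\in F$, simplicity of $G$ gives $G=\langle f^G\rangle$. Fix some $g\in G\setminus\{1\}$ outside $F$, or better a finite subgroup $K\supseteq F$ with $K\neq F$. For each $f\in F\setminus\{1\}$, the set $K$ lies in the normal closure of $f$ in $G$. So each element of $K$ is a product of finitely many conjugates of $f^{\pm1}$. Collect all the finitely many conjugating elements, over all $f\in F\setminus\{1\}$ and all elements of $K$, and let $H$ be the finite subgroup generated by $K$ together with these elements. In $H$, the normal closure $\langle f^H\rangle$ contains $K\supsetneq F$ for every $1\neq f\in F$.

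\emph{Main obstacle: getting $F\cap M=\mathbf{1}$.} We now need a maximal normal subgroup $M$ of $H$ containing no nontrivial element of $F$. A normal subgroup containing $f\ne1$ contains $K$. So I would take $M$ to be a maximal element among normal subgroups of $H$ that do not contain $K$. The difficulty is that such an $M$ need not be maximal normal overall, and need not meet $F$ trivially just because it misses $K$.

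To fix this, I would first replace the condition on $K$ by a stronger one. Choose $K$ so large that $K$ contains a nonabelian finite simple section dominating $F$; this is possible since an infinite simple locally finite group is not locally solvable. A cleaner route is to use the standard argument as in \cite{hartley1995simple}. Take $H$ so that $F\subseteq\langle f^H\rangle$ for all $1\ne f\in F$; the above construction gives this with $K=F$ if we also insist that $H$ is "$F$-closed". Then set $M$ to be the largest normal subgroup of $H$ with $M\cap F=\mathbf{1}$, which exists after taking products carefully, or simply a maximal one. Next, show $H/M$ is simple, or at least has a unique minimal normal subgroup containing the image of $F$. Then pass to a further section $H_1/M$, namely the normal closure of $F$ modulo $M$, and take a maximal normal subgroup of $H_1$ over which $F$ still embeds.

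This final reduction to a genuinely simple factor, by iterating the normal-closure argument and using finiteness to terminate, is the step I expect to require the most care. I would defer its details to Kegel's original argument cited in \cite{hartley1995simple}*{Corollary~2.5}.
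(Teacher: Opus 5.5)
The paper gives no proof of this theorem; it quotes it from Kegel via Hartley's Corollary~2.5. Your opening reduction is correct: it suffices to find a finite $H\supseteq F$ and a maximal normal $M\trianglelefteq H$ with $F\cap M=\mathbf{1}$. But as a proof the proposal has a genuine gap. The one step that carries the content of the theorem is deferred back to that citation, and several intermediate claims are wrong.

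\begin{itemize}
\item A normal $M\trianglelefteq H$ not containing $K$ \emph{does} meet $F$ trivially, since $1\neq f\in F\cap M$ would force $K\subseteq\langle f^H\rangle\subseteq M$. The only issue is simplicity of the factor.
\item There is in general no largest normal subgroup meeting $F$ trivially. Take $H=S\times S$ with $S$ nonabelian simple and $F$ diagonal: this satisfies your closure hypothesis, yet $S\times\mathbf{1}$ and $\mathbf{1}\times S$ both meet $F$ trivially.
\item The appeal to non-local-solubility is unjustified and does not touch the real obstruction.
\end{itemize}

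The real obstruction is this. Suppose $H$ only satisfies $F\subseteq\langle f^H\rangle$ for $1\neq f\in F$, and $M$ is maximal normal subject to $F\not\subseteq M$. Then $H/M$ has the unique minimal normal subgroup $\langle F^H\rangle M/M\cong S_1\times\cdots\times S_k$, which contains a copy of $F$. For nonabelian $S_j$, the maximal normal subgroups of $\langle F^H\rangle M$ above $M$ are the preimages $M_j$ of $\prod_{l\neq j}S_l$. Nothing prevents $F$ from meeting every $M_j$ nontrivially, e.g.\ $H=S\wr C_2$ and $F=A\times B$ with $\mathbf{1}\neq A,B\leq S$. The reason is that the kernels $F\cap M_j$ are only normal in $F$, while your hypothesis controls only normal subgroups of $H$. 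Passing to $\langle F^H\rangle M$ and ``iterating'' does not help: the closure hypothesis is not inherited by the smaller group, and you give no termination argument.

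The missing idea is to impose the closure condition on two levels. Choose finite subgroups $F\leq H_1\leq H_2$ with
\begin{itemize}
\item $F\subseteq\langle f^{H_1}\rangle$ for all $1\neq f\in F$, and
\item $H_1\subseteq\langle h^{H_2}\rangle$ for all $1\neq h\in H_1$.
\end{itemize}
Both are obtained as in your key step, with no regress, since each involves only finitely many elements.

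Let $R\trianglelefteq H_2$ be maximal with $H_1\not\subseteq R$. The second condition gives $H_1\cap R=\mathbf{1}$. Every normal subgroup of $H_2$ properly containing $R$ contains $H_1$, so $N/R$, with $N\coloneqq\langle H_1^{H_2}\rangle R$, is the unique minimal normal subgroup of $H_2/R$. Hence $N/R=S_1\times\cdots\times S_k$ with each $S_j$ simple (split into cyclic factors if $N/R$ is abelian).

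Let $M_j\trianglelefteq N$ be the preimage of $\prod_{l\neq j}S_l$. Then $P_j\coloneqq H_1\cap M_j$ is normal in $H_1$, and $\bigcap_j P_j=H_1\cap R=\mathbf{1}$. The first condition now supplies exactly the dichotomy you lacked: each $P_j$ either contains $F$ or meets it trivially. Since $\bigcap_jP_j=\mathbf{1}\neq F$, some $P_j$ does not contain $F$. For that $j$, the pair $(N,M_j)$ works: $F\leq N$, $N/M_j\cong S_j$ is simple, and $F\cap M_j=F\cap P_j=\mathbf{1}$.
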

	
There is a partial converse of Kegel's theorem which will be useful to verify whether a locally finite group is simple.
	
\begin{lemma}\label{lem:kegel_convrse}
If the locally finite group $G$ has a Kegel cover $(G_i,N_i)_{i\in I}$ such that $N_i$ is the unique maximal normal subgroup of $G_i$ (for all $i\in I$), then $G$ is simple.
\end{lemma}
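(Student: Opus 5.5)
We argue directly: take a nontrivial normal subgroup $M\trianglelefteq G$ and show $M=G$. Since $M\neq\mathbf{1}$, fix $1\neq m\in M$, and let $g\in G$ be arbitrary. It suffices to show $g\in M$.

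\textbf{Choosing a section.} The subgroup $F\coloneqq\gensubgrp{m,g}$ is finitely generated, and therefore finite because $G$ is locally finite. By property (iii) of a sectional cover (Definition~\ref{def:sec_cov}) there is an index $i\in I$ with $F\subseteq G_i$ and $F\cap N_i=\mathbf{1}$. In particular $m\in G_i\setminus N_i$.

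\textbf{Using the hypothesis on $N_i$.} Consider $K\coloneqq M\cap G_i$, which is normal in $G_i$ since $M$ is normal in $G$. We have $m\in K$, so $K\not\subseteq N_i$. The product $KN_i$ is a normal subgroup of $G_i$ that properly contains $N_i$. By hypothesis $N_i$ is the unique maximal normal subgroup of $G_i$, and $G_i$ is finite, so every proper normal subgroup of $G_i$ lies in some maximal normal subgroup, hence in $N_i$. Therefore $KN_i=G_i$.

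So far this only gives $g\in KN_i$, i.e.\ $g=kn$ with $k\in M$ and $n\in N_i$. This is the main obstacle: we must also absorb $N_i$. Note that $N_i$ itself need not lie in $M$.

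\textbf{Absorbing $N_i$.} To get around this, we choose a larger section in which $N_i$ is visible. The group $H\coloneqq\gensubgrp{G_i,m}=G_i$ is finite. Apply (iii) once more to the finite subgroup $G_i$ itself: there is $j\in I$ with $G_i\subseteq G_j$ and $G_i\cap N_j=\mathbf{1}$. Running the argument of the previous paragraph with $j$ in place of $i$ (using $m\in G_j\setminus N_j$) gives $(M\cap G_j)N_j=G_j$. Hence every element $h\in G_i$ can be written $h=k'n'$ with $k'\in M$ and $n'\in N_j$.

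We now want to conclude that $h\in M$. Intersecting with $G_i$ does not immediately force $n'$ to be trivial, since $k'$ need not lie in $G_i$, so a different route is needed. Look at the image of $G_i$ in the simple group $G_j/N_j$. Because $G_i\cap N_j=\mathbf{1}$, $G_i$ embeds there, and $M$ maps onto $G_j/N_j$.

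The cleaner route avoids this altogether by using $G_i$ directly. Let $L\coloneqq M\cap G_i\trianglelefteq G_i$. If $L\neq G_i$, then $L\subseteq N_i$ by uniqueness of the maximal normal subgroup. But $m\in L$ and $m\notin N_i$, a contradiction. Hence $L=G_i$, so $g\in G_i\subseteq M$.

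So the apparent obstacle dissolves: the decisive observation is that $M\cap G_i$ is a normal subgroup of $G_i$ not contained in $N_i$, and therefore must equal $G_i$. As $g$ was arbitrary, $M=G$, and $G$ is simple. Nontriviality of $G$ is implicit: it needs some $G_i\neq N_i$, which holds whenever $G\neq\mathbf{1}$, because simple factors are nontrivial.
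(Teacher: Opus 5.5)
Your final argument (the ``cleaner route'') is correct and is essentially the paper's: both rest on the fact that a normal subgroup of $G_i$ containing an element outside $N_i$ must be all of $G_i$. You apply this to $M\cap G_i$ for a single index $i$ chosen via $\langle m,g\rangle$, whereas the paper applies it to the normal closure of $n$ in cofinally many $G_j$ and takes the union; your choice of $i$ neatly replaces both the paper's cofinality argument and its separate treatment of finite $G$. The detour about ``absorbing $N_i$'' is unnecessary and should be deleted, and nontriviality of $G$ needs no assumption $G\neq\mathbf{1}$: it follows directly because each $N_i$, being a maximal normal subgroup, is proper in $G_i$.
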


For the proof of the previous lemma and further results of this paper, we define the structure of a directed set on $I$ by: $i<j$ if and only if $(G_i,N_i)\neq(G_j,N_j)$, $G_i\subseteq G_j$, and $N_j\cap G_i=\mathbf{1}$. Clearly, the corresponding relation $\leq$ is reflexive, antisymmetric, and transitive. Also, with this definition, $I$ is directed, as for $i,j\in I$ arbitrary, there is $k\in I$ such that $G_i,G_j\subseteq\gensubgrp{G_i,G_j}\subseteq G_k$ and $N_k\cap\gensubgrp{G_i,G_j}=\mathbf{1}=N_k\cap G_i=N_k\cap G_j$, so $i,j\leq k$.

The following is evident.

\begin{remark}
If $(G_i,N_i)_{i\in I}$ is a Kegel cover for $G$ and $J \subseteq I$ 
    is a subset which is cofinal with respect to the relation $<$, 
    then $(G_i,N_i)_{j\in J}$ is also a Kegel cover for $G$
\end{remark}
 
\begin{proof}[Proof of Lemma~\ref{lem:kegel_convrse}]
We may assume that $G$ is infinite, otherwise $G$ is a finite simple group (apply Definition~\ref{def:sec_cov}(iii) to $F=G$). Let $1_G\neq n\in G$. Then there is $i\in I$ such that $n\in G_i$. Now by (iii) of Definition~\ref{def:sec_cov} applied to $F=G_i$, since $G$ is infinite, there is $j>i$. Then $n\in G_i\setminus\mathbf{1}\subseteq G_j\setminus N_j$ for all such $j$, so that $\gennorsubgrp{n}\geq\gensubgrp{n^{G_j}}=G_j$. Moreover, the set of all such $j$ is cofinal in $I$: For $k\in I$ arbitrary, there is $j>i,k$, since $I$ is directed and $G$ is infinite. Hence $\gennorsubgrp{n}\supseteq\bigcup_{j}G_j=G$, i.e.\ $G$ is simple.
\end{proof}
	
To simplify the exposition, we need the following lemma, which proves that a nonabelian simple locally finite group has a Kegel cover with all factors nonabelian simple groups. We state it here without proof.
	
\begin{lemma}[\cite{hartley1995simple}*{Corollary 2.5}] Let $G$ be any infinite simple locally finite group. Then \begin{enumerate}[(i)]
\item The set of factors of any Kegel cover forms a set of finite simple groups of unbounded orders.
\item $G$ has a Kegel cover consisting of perfect groups. In particular, all factors are nonabelian finite simple groups. 
\end{enumerate}
\end{lemma}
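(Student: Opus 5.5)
Part (i) comes first. Take any Kegel cover $(G_i,N_i)_{i\in I}$ of $G$, which exists by Theorem~\ref{thm:kegel}. Suppose the orders $\card{G_i/N_i}$ were bounded, say by $M$. Since $G$ is infinite and locally finite, it contains finite subgroups $F$ of order larger than $M$. Definition~\ref{def:sec_cov}(iii) gives some $i$ with $F\subseteq G_i$ and $F\cap N_i=\mathbf{1}$, so $F$ embeds into $G_i/N_i$ and $\card{G_i/N_i}\geq\card{F}>M$, a contradiction. Hence the factors have unbounded orders. They are simple by definition of a Kegel cover.

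For part (ii), I would start from an arbitrary Kegel cover $(G_i,N_i)_{i\in I}$ and replace each pair by $(G_i^{(\infty)},N_i\cap G_i^{(\infty)})$, where $G_i^{(\infty)}$ is the last term of the derived series of $G_i$. This subgroup is perfect and normal in $G_i$. The key step is to check condition (iii) for the new family. Given a finite subgroup $F\leq G$, it is perfect-enough to use simplicity of $G$: I would enlarge $F$ to a finite subgroup $F'\supseteq F$ with $F\subseteq[F',F']$, and iterate this finitely many times so that $F\subseteq F'^{(k)}$ for every $k$ up to some bound. Since $G$ is infinite simple, it is perfect, so every element of $F$ is a product of commutators of elements of $G$. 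Collecting all these finitely many elements gives a finite subgroup $F_1$ with $F\subseteq[F_1,F_1]$. Repeating the construction gives $F_1\subseteq[F_2,F_2]$, and so on.

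The obstacle is that a finite chain only controls finitely many derived steps, while the derived length of $G_i$ is not bounded a priori. I would handle this by choosing $i$ with $F_k\subseteq G_i$ and $F_k\cap N_i=\mathbf{1}$ for suitable $k$. The image of $G_i$ in the simple quotient $G_i/N_i$ then contains a copy of $F_k$. Once $\card{F}>1$ and $F$ is nontrivial, $G_i/N_i$ must be nonabelian simple, and then $G_i=G_i^{(\infty)}N_i$. The solvable part $G_i/G_i^{(\infty)}$ is then a quotient of $N_i$.

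It is cleaner to argue directly. If $G_i/N_i$ is nonabelian simple, then $G_i^{(\infty)}N_i=G_i$. We need $F\subseteq G_i^{(\infty)}$. Write $F\subseteq F_k^{(k)}\subseteq G_i^{(k)}$; this does not reach $G_i^{(\infty)}$ unless $k$ is at least the derived length of $G_i/G_i^{(\infty)}$, which is at most $\log_2\card{N_i}$-ish. I therefore expect this to be the genuinely delicate step. My fallback is to cite the standard argument in \cite{hartley1995simple}*{Corollary~2.5}, which shows one may pass to a cofinal subfamily and replace $G_i$ by a perfect subgroup generated by conjugates of a fixed finite perfect subgroup containing $F$. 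Concretely, I would first embed $F$ into a finite perfect subgroup $P\leq G$: take $P=F_k^{(\infty)}$ for large $k$ after showing $F\subseteq F_k^{(\infty)}$, using that $F_k$ is finite so its derived series stabilises after at most $\log_2\card{F_k}$ steps. I would then take $G_i$ with $P\subseteq G_i$, and since $P$ is perfect, $P\subseteq G_i^{(\infty)}$ automatically. The factor $G_i^{(\infty)}/(N_i\cap G_i^{(\infty)})\cong G_i/N_i$ is then nonabelian simple, which proves the claim.
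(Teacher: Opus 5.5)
The paper does not prove this lemma; it only cites Hartley. So I am judging your argument on its own merits.

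\textbf{Part (i) and the overall plan for (ii).} Part (i) is correct. Your final reduction in (ii) is also sound, provided every finite $F\leq G$ lies in a nontrivial finite perfect subgroup $P$. In that case you choose $i$ with $P\leq G_i$ and $P\cap N_i=\mathbf{1}$. Then $P\leq G_i^{(\infty)}$, and $G_i/N_i$ contains a copy of $P$, so it is nonabelian. Hence $G_i^{(\infty)}N_i=G_i$ and the new factor is isomorphic to $G_i/N_i$. Two small points:
\begin{enumerate}
\item You must discard the indices with abelian factor. For those, $G_i^{(\infty)}\leq N_i$ and the new factor is trivial, which is not simple.
\item Your intermediate claim that a nontrivial $F$ alone forces $G_i/N_i$ to be nonabelian is false, for instance when $F$ is cyclic.
\end{enumerate}

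\textbf{The genuine gap: existence of $P$.} Your iteration produces finite groups $F\subseteq F_1\subseteq F_2\subseteq\cdots$ with $F\subseteq F_k^{(k)}$. However, the derived series of $F_k$ may need far more than $k$ steps to stabilise; your own bound $\log_2\card{F_k}$ grows with $k$. So you never obtain $F\subseteq F_k^{(\infty)}$. This is exactly the circularity you noticed for $G_i$, merely moved to $F_k$.

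No refinement of this idea can succeed, because it uses only that $G$ is perfect. McLain's group $M(\mathbb{Q},\mathbb{F}_p)$ is a perfect locally finite $p$-group. Its finite subgroups are $p$-groups, so its only finite perfect subgroup is trivial. Simplicity therefore has to enter globally.

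\textbf{How to close the gap.} Let $P_G$ be the union of all finite perfect subgroups of $G$.
\begin{enumerate}
\item If $A,B$ are finite perfect subgroups, then $A,B\leq\gensubgrp{A,B}^{(\infty)}$, which is again finite and perfect. So the union is directed, hence a subgroup.
\item $P_G$ is closed under conjugation, so it is normal. By simplicity, $P_G=\mathbf{1}$ or $P_G=G$.
\item Suppose $P_G=\mathbf{1}$. Then every finite subgroup of $G$ is soluble, so every factor of a Kegel cover (Theorem~\ref{thm:kegel}) is cyclic of prime order. By condition (iii) of Definition~\ref{def:sec_cov}, every finite subgroup of $G$ is then cyclic. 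Hence $G$ is abelian, which contradicts $G$ being infinite and simple.
\item Therefore $P_G=G$. By directedness, every finite $F$ lies in a single finite perfect subgroup, which can be taken nontrivial.
\end{enumerate}
This is the input your last paragraph needs.
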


Hence, subsequently, we will just say \emph{Kegel cover} (of an infinite simple locally finite group) for a Kegel cover without cyclic groups of prime order as a factor. 

We have the following easy result, which simplifies the subsequent analysis. It depends on the classification of finite simple groups. Write $\mathbf{G}$ for a Lie type and ${}^a\mathbf{G}$ for a twisted one.

\begin{lemma}\label{lem:red_one_ltype}
Let $G$ be an infinite simple locally finite group with Kegel cover $(G_i,N_i)_{i\in I}$. Then there exists a cofinal subset $J\subseteq I$, such that for all $j\in J$ one of the following holds:
\begin{enumerate}[(i)]
\item $G_j/N_j\cong \Alt_{n_j}$;
\item There is a classical Lie type $X\in\set{\PSL,\PSp,\POmega^\bullet,\PSU}$ such that for all $j\in J$ we have $G_j/N_j\cong X_{n_j}(q_j)$;
\item There exists a (possibly twisted) Lie type 
$$
{}^a\mathbf{G}\in\set{E_6,E_7,E_8,F_4,G_2,{}^2E_6,{}^3D_4',{}^2B_2,{}^2F_4,{}^2G_2}
$$ 
(here $a\in\set{1,2,3}$, where $a=1$ refers to the untwisted case) such that $G_j/N_j\cong{}^a\mathbf{G}(q_j)$.
\end{enumerate}
Here $n_j\geq 2$ are integers and $q_j$ suitable proper prime powers ($j\in J$).
\end{lemma}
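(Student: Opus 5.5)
The plan is a pigeonhole argument on the factors, using CFSG and the directedness of $I$ with respect to $<$. The key observation is that a subset of $I$ which is \emph{not} cofinal is dominated by some index: if $J\subseteq I$ is not cofinal, there is $k\in I$ such that no $j\in J$ satisfies $j\geq k$. Moreover, if $I=J_1\cup\cdots\cup J_s$ is a finite partition, then at least one $J_t$ is cofinal. Indeed, suppose none were. Pick witnesses $k_1,\ldots,k_s$ with no element of $J_t$ above $k_t$. Since $I$ is directed, there is $k\geq k_1,\ldots,k_s$. This $k$ lies in some $J_t$ and is above $k_t$, which is a contradiction. The same argument shows that, given a cofinal set $J$ and a finite partition of $J$, some part is again cofinal in $I$.

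Next I would partition $I$ according to the isomorphism type of the factor $G_i/N_i$. By the preceding lemma, each factor may be taken to be a nonabelian finite simple group. By the classification of finite simple groups, each factor is then one of the following: an alternating group; a classical group $\PSL_n(q)$, $\PSp_{2n}(q)$, $\POmega^\bullet_n(q)$ or $\PSU_n(q)$; an exceptional or twisted group of one of the finitely many types listed in (iii) (including ${}^3D_4$); or one of the finitely many sporadic groups (together with possibly finitely many exceptional isomorphism types such as the Tits group ${}^2F_4(2)'$). This gives a finite partition of $I$ into the classes ``alternating'', one class for each of the four classical families $X$, one class for each exceptional type ${}^a\mathbf{G}$, and one class for each sporadic or exceptional group.

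By the previous paragraph, one of these parts is cofinal. The remaining task is to rule out that the cofinal part consists of a single fixed finite group $S$, i.e.\ a sporadic or exceptional factor. This is the step I expect to be the main obstacle, although it should be handled by part (i) of the preceding lemma. The relevant property is that the factors along any cofinal subfamily have unbounded order. If $J$ is cofinal, then $(G_j,N_j)_{j\in J}$ is again a Kegel cover by the Remark above, so part (i) applies to it. If all factors indexed by $J$ were isomorphic to $S$, these factors would have bounded order, a contradiction since $G$ is infinite. The same observation excludes bounded $n_j$ and $q_j$ collapsing to finitely many groups, but the statement does not require anything about those parameters.

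Finally, I would check the small exceptional isomorphisms, for instance $\Alt_6\cong\PSL_2(9)$ and $\Alt_8\cong\PSL_4(2)$. Since these are finitely many groups, each can be assigned to any one class, and the argument is unaffected. This yields a cofinal $J$ on which one of (i)--(iii) holds uniformly.
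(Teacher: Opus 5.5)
Your proof is correct and is essentially the paper's argument: a pigeonhole over finitely many families using directedness of $I$. In both, the case of finitely many fixed (sporadic) factors is excluded because the factors of a Kegel cover, including one restricted to a cofinal subset, have unbounded order. The paper argues by contradiction, producing a tail $I^+$ on which every factor would be sporadic. You instead partition $I$ into finitely many classes (one per sporadic group) and pick a cofinal one; this is only a cosmetic difference.
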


\begin{proof}
Assume the opposite holds. Define the subsets 
\begin{gather*}
J_{\rm alt}\coloneqq\set{i\in I}[G_i/N_i\cong \Alt_{n_i}\text{ for }n_i\geq 5];\\
J_X\coloneqq\set{i\in I}[G_i/N_i\cong X_{n_i}(q_i)\text{ for }n_i\geq2\text{ and suitable } q_i];\\
J_{{}^a\mathbf{G}}\coloneqq\set{i\in I}[G_i/N_i\cong{}^a\mathbf{G}(q_i)\text{ for suitable } q_i].
\end{gather*}
of $I$, where $X$ is a classical Lie type and ${}^a\mathbf{G}$ is as above. Assume that all the sets $J_{\rm alt},J_X,J_{{}^a\mathbf{G}}$ are not cofinal. Then, by directedness and since $G$ is infinite, there is an index $j\in I$ such that any of the previous subsets intersects $I^+\coloneqq\set{i\in I}[i\geq j]$ trivially. Then $G_i/N_i$ is a sporadic group for all $i\in I^+$. But these Kegel factors are of unbounded order, whereas there exist only finitely many sporadic groups. This contradiction proves the claim.
\end{proof}

\begin{definition}\label{def:type}
Let $G$ be an infinite simple locally finite group.

\begin{enumerate}[(i)]
\item We say that $G$ is of \emph{type $\Alt_\infty$} if it admits a Kegel cover $(G_i,N_i)_{i\in I}$ such that
$$
G_i/N_i \cong \Alt_{n_i}
$$
for every $i\in I$. Since $G$ is infinite, the sequence $(n_i)_{i\in I}$ is necessarily unbounded.
\item Let $X\in\set{\PSL,\PSp,\POmega^\bullet,\PSU}$ be a family of classical finite simple groups of Lie type. We say that $G$ is of \emph{type $X_{\overline{n}}(\overline{q})$} if it admits a Kegel cover $(G_i,N_i)_{i\in I}$ such that
$$
G_i/N_i \cong X_{n_i}(q_i)
$$
for every $i\in I$, where
$$
\overline{n}\coloneqq\limsup_{i\in I} n_i,
\qquad
\overline{q}\coloneqq\limsup_{i\in I} q_i.
$$
\item Finally, for a (twisted) nonclassical family ${}^a\mathbf{G}$, we say that $G$ is of \emph{type ${}^a\mathbf{G}(\infty)$} if it admits a Kegel cover $(G_i,N_i)_{i\in I}$ such that
$$
G_i/N_i \cong {}^a\mathbf{G}(q_i)
$$
for every $i\in I$, where $(q_i)_{i\in I}$ must be unbounded, as $G$ is infinite.
\end{enumerate}
\end{definition}

\begin{remark}
By Lemma~\ref{lem:red_one_ltype}, every infinite simple locally finite group is of at least one of the types given in Definition~\ref{def:type}. It can happen that an infinite simple locally finite group is of some classical type $X_\infty(\infty)$ and at the same time it has type $X_\infty(q)$ for a prime power $q$: Let $q_i$ be a power of $q$ ($i\in I=\mathbb{N}$) with $\lim_{i\to\infty}{q_i}=\infty$ and consider the sequence $G_0=\PSL_{n_0}(q_0)\hookrightarrow\Alt(G_0)\hookrightarrow H_0=\PSL_{\card{G_0}}(q)\hookrightarrow G_1=\PSL_{\card{G_0}}(q_1)\hookrightarrow\Alt(G_1)\hookrightarrow H_1=\PSL_{\card{G_1}}(q)\hookrightarrow\cdots$. Then $(G_i,\mathbf{1})_{i\in I}$ and $(H_i,\mathbf{1})_{i\in I}$ are both Kegel covers of the same simple locally finite group $G$, where the former witnesses that $G$ is of type $\PSL_\infty(\infty)$ whereas the second shows that $G$ is of type $\PSL_\infty(q)$. 
\end{remark}

Next we characterize when a simple locally finite group is \emph{linear}, i.e.\ it embeds into some group $\GL_n(K)$ for some $n\in\mathbb{N}$ and some field $K$. If there exists such an embedding, we call $G$ \emph{linear of degree $n$}. For our characterization we need a deep result of Mal'cev:
	
\begin{theorem}[Mal'cev's representation theorem \cite{hartley1995simple}*{Theorem~2.7}]\label{thm:malcev}
Let $G$ be any group and $n\in\mathbb{N}$. Suppose that each finitely generated subgroup of $G$ is linear of degree at most $n$. Then $G$ is linear of degree $n$.    
\end{theorem}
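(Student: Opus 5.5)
The plan follows the standard route: reduce to finitely generated subgroups, find one ambient group containing them all, and then apply compactness. First fix an integer $n$, and suppose every finitely generated subgroup $H\leq G$ embeds in $\GL_n(K_H)$ for some field $K_H$. We may assume each $K_H$ is algebraically closed, since enlarging the field preserves the embedding. We then encode ``$G$ is linear of degree $n$'' as the satisfiability of a first-order theory. The language is that of fields, together with $n^2$ constant symbols $a_{ij}^g$ for each $g\in G$; these are to be read as the matrix entries of the image of $g$. The theory $T$ consists of four groups of axioms:
\begin{enumerate}[(a)]
\item the axioms of algebraically closed fields (any fixed characteristic can be handled by splitting into cases; see below);
\item for all $g,h\in G$, the polynomial equations stating that the matrix $(a^{gh}_{ij})$ equals the product $(a^g_{ij})(a^h_{ij})$;
\item the equations stating that $(a^{1}_{ij})$ is the identity matrix;
\item for each $g\neq 1_G$, the sentence $\bigvee_{i,j}a^g_{ij}\neq\delta_{ij}$, which expresses injectivity.
\end{enumerate}
A model of $T$ is then exactly an algebraically closed field $K$ together with an injective homomorphism $G\to\GL_n(K)$. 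Invertibility is automatic, because $gg^{-1}=1$ is already imposed by (b) and (c).

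Next we check finite satisfiability. A finite subset $T_0\subseteq T$ mentions only finitely many elements of $G$. Let $H$ be the subgroup generated by these elements; it is finitely generated. By hypothesis there is an embedding $\rho\colon H\to\GL_n(K_H)$ with $K_H$ algebraically closed. Interpret $a^g_{ij}$ as the entries of $\rho(g)$ for $g\in H$, and assign the remaining constants arbitrarily. All axioms in $T_0$ of types (b), (c) and (d) involve only elements of $H$, since products of elements of $H$ stay in $H$. Hence they hold, because $\rho$ is an injective homomorphism. The compactness theorem then gives a model of $T$, and hence a faithful representation $G\to\GL_n(K)$.

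The one subtle point is the characteristic, since the fields $K_H$ may have varying characteristics. We avoid this by not fixing the characteristic in (a): we impose only the field axioms plus algebraic closure. The compactness argument then goes through unchanged, and the resulting $K$ has some characteristic, possibly $0$ even if every $K_H$ has positive characteristic. That is harmless, since the conclusion only asks for \emph{some} field $K$.

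If one prefers to avoid logic, the same argument can be phrased with an ultraproduct. Take the directed set of finitely generated subgroups with an ultrafilter containing all upper cones, and form $\prod_{H}\GL_n(K_H)/\mathcal U\cong\GL_n(\prod K_H/\mathcal U)$. Send $g$ to the class of $(\rho_H(g))_H$, where $\rho_H(g)$ is taken to be $1$ when $g\notin H$. Checking that this map is a homomorphism uses that the upper cone above $\langle g,h\rangle$ lies in $\mathcal U$, and injectivity follows in the same way. This version makes clear that the only real issue is bookkeeping with the ultrafilter, which is routine.
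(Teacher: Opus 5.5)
Your proof is correct and complete. It is the standard compactness argument (Mal'cev's local theorem). The ultraproduct reformulation is equally valid: the upper cones have the finite intersection property because the finitely generated subgroups form a directed set.

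The paper gives no proof of this statement to compare against. It quotes the result from Hartley's survey and uses it as a black box in Lemma~\ref{lem:lin_lfsg}. Your ultraproduct version is, however, the same device the paper itself uses in Proposition~\ref{prop:ultraproduct-finitary}.

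The one point you pass over silently is that the hypothesis only gives embeddings $H\hookrightarrow\GL_m(K_H)$ with $m\leq n$. You should first compose with $A\mapsto\diag(A,1_{n-m})$ to land in $\GL_n(K_H)$, and only then run the argument.
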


Here is the promised characterization, which distinguishes the types $X_\infty(\overline{q})$, $X_n(\infty)$ (for $X$ a classical Lie type), and ${}^a\mathbf{G}(\infty)$ for $n\in\mathbb{N}$ and $\overline{q}$ arbitrary.
 
\begin{lemma}\label{lem:lin_lfsg}
Let $G$ be an infinite simple locally finite group. Then $G$ is linear if and only if $G$ is of type $X_n(\infty)$ ($X$ classical) or ${}^a\mathbf{G}(\infty)$ (${}^a\mathbf{G}$ as in Lemma~\ref{lem:red_one_ltype}(iii)) for some $n\in\mathbb{N}_{\geq2}$.
\end{lemma}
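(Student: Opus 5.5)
The proof splits into the two implications; in both, the key point is to compare the degrees of faithful linear representations with the ranks of the Kegel factors.

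\emph{If $G$ has such a type, then $G$ is linear.} Let $G$ be of type $X_n(\infty)$ or ${}^a\mathbf{G}(\infty)$, witnessed by a Kegel cover $(G_i,N_i)_{i\in I}$. Every factor $G_i/N_i$ is then a finite simple group of Lie type of rank at most $n$, so it has a faithful projective representation of degree at most some $N=N(n)$ over its defining field; for instance, the natural module of the quasisimple cover. I will use Mal'cev's Theorem~\ref{thm:malcev}: it suffices to show that every finitely generated subgroup $F$ of $G$ is linear of degree at most some uniform bound $N'$. Since $G$ is locally finite, $F$ is finite. By Definition~\ref{def:sec_cov}(iii) there is an $i$ with $F\le G_i$ and $F\cap N_i=\mathbf 1$, so $F$ embeds into $G_i/N_i$. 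The factor $G_i/N_i$ acts faithfully on its projective space, which gives a faithful linear representation of a central extension. Two ways to handle this:
\begin{itemize}
\item Use the adjoint representation, of degree $\dim\mathbf{G}\le N'(n)$, which is faithful on the simple group itself. This seems cleanest.
\item Alternatively, act on a suitable tensor-product module.
\end{itemize}
Thus $F$ is linear of degree at most $N'$ over $\overline{\mathbb F}_{p_i}$. The characteristic may vary with $i$, but Mal'cev's theorem as stated allows arbitrary fields for the finitely generated subgroups, so $G$ is linear of degree $N'$.

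\emph{If $G$ is linear, then it has such a type.} Suppose $G\le \GL_n(K)$. By Lemma~\ref{lem:red_one_ltype}, after passing to a cofinal subset we may assume $G$ is of type $\Alt_\infty$, of type $X_{\overline n}(\overline q)$, or of type ${}^a\mathbf{G}(\infty)$. The last case is already acceptable, so I must exclude type $\Alt_\infty$ and unbounded classical rank $\overline n=\infty$. Each factor $G_i/N_i$ is a section of the finite linear group $G_i\le\GL_n(K)$. The main step is to bound the rank of finite simple sections of finite subgroups of $\GL_n(K)$ in terms of $n$. The tool I would try first is a $p$-rank or elementary abelian subgroup argument:
\begin{itemize}
\item $\Alt_m$ and classical groups of rank $m$ contain elementary abelian $r$-subgroups of rank growing linearly in $m$ for a suitable prime $r$ (say $r=2$, or the defining characteristic).
\item Such subgroups lift to sections of $G_i$; by a Hall-type argument one can take an $r$-subgroup of $G_i$ mapping onto them, with bounded sectional rank.
\item Finite subgroups of $\GL_n(K)$ have $r$-sectional rank bounded by a function of $n$: for $r\ne\mathrm{char}\,K$ via diagonalizability of abelian $r'$-subgroups, and for $r=\mathrm{char}\,K$ via the unipotent triangular form, which bounds the nilpotency class and the derived length but not the rank.
\end{itemize}

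The defining-characteristic case is the delicate point, and this is where I expect the main obstacle. Instead of ranks I would use a different invariant. Finite simple sections of $\GL_n(K)$ are either of Lie type in $\mathrm{char}\,K$ with rank at most $n$, by the Larsen--Pink / Weisfeiler-type structure theorem, or of bounded order by Jordan's theorem in cross characteristic. So the simplest route is to cite this classical fact, as in Hartley's survey: a finite simple section of $\GL_n(K)$ is either of Lie type of rank $\le n$ in characteristic $\mathrm{char}\,K$ or has order bounded in terms of $n$. The bounded-order factors cannot be cofinal, because the orders of Kegel factors are unbounded. Hence, on a cofinal subset, the factors are of bounded rank, which excludes $\Alt_\infty$ and $\overline n=\infty$. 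Therefore $G$ is of type $X_m(\infty)$ (the rank is bounded, so the order bound forces $q_i\to\infty$) or of type ${}^a\mathbf{G}(\infty)$.
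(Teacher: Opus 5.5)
Your argument is correct and has the same architecture as the paper's, but it uses a different key input for the converse direction.

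\textbf{The `if' direction.} This is the paper's argument. Each factor has a faithful representation of bounded degree: the paper uses conjugation on matrices, $\PSL_n(q)\leq\GL_{n^2}(q)$, and your adjoint module works equally well, since a nontrivial representation of a simple group is faithful. Combined with the Kegel covering property and Mal'cev's theorem, this gives linearity.

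\textbf{The converse.} The paper argues contrapositively. In types $\Alt_\infty$ and $X_\infty(\overline{q})$ the factors contain $\Alt_k$ with $k$ unbounded. A finite subgroup of $G_i$ mapping onto $\Alt_k$ cannot be linear of degree at most $N$ once $k$ is large; the paper asserts this without proof. You instead cite the Jordan/Larsen--Pink description of all nonabelian simple sections of finite subgroups of $\GL_N(K)$. This gives bounded rank of the factors directly. It is more general, but it rests on a much heavier theorem.

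\textbf{Your abandoned $r$-rank idea.} It actually proves exactly the special fact the paper needs. Passing to alternating subgroups lets you avoid the defining characteristic, which was the obstacle you flagged.
\begin{enumerate}[(i)]
\item Take $r\in\{2,3\}$ with $r\neq\mathrm{char}\,K$.
\item $\Alt_k$ contains $(\mathbb{Z}/r)^m$ with $m\geq\lfloor k/4\rfloor$.
\item A Sylow $r$-subgroup $R$ of the preimage of this subgroup in $G_i$ maps onto it.
\item Since $r\neq\mathrm{char}\,K$, you can lift $R$ to characteristic $0$ and apply Jordan's theorem. This gives an abelian normal subgroup of $R$ of index at most $J(N)$.
\item That abelian subgroup is diagonalizable, hence generated by $N$ elements.
\item Therefore $r^m\leq r^N J(N)$, which bounds $k$.
\end{enumerate}
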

	
\begin{proof}
First, assume that $G$ is either of type $\Alt_\infty$ or $X_\infty(\overline{q})$ ($X$ classical). Then, for some Kegel cover $(G_i,N_i)_{i\in I}$ of $G$ we have $G_i/N_i\cong \Alt_{n_i}$ resp.\ $G_i/N_i\cong X_{n_i}(q_i)$ (for all $i\in I$) for an unbounded tuple $(n_i)_{i\in I}$. Suppose that $G$ is linear of degree $N$. Then not all $G_i$ can be a subgroup of $G$. Indeed, the simple factors $G_i/N_i\cong \Alt_{n_i}$ or $X_{n_i}(q_i)$ ($i\in I$) contain subgroups $H\subseteq G_i/N_i$ of the form $\Alt_n$ for arbitrarily large $n$ as $n_i$ tends to infinity. But the covers $\widetilde{H}\subseteq G_i$ of $H$ do not embed into $G$ for $n$ large enough, as they are not linear of degree $\leq N$.

Conversely, when $G$ is of type $X_n(\overline{q})$ resp.\ ${}^a\mathbf{G}(\overline{q})$, note that we must have $\overline{q}=\infty$, since otherwise $G$ would be finite. In this case, the Kegel factors $G_i/N_i$ stay linear of bounded degree:  E.g.\ $\PSL_n(q)\leq\GL_{n^2}(q)$. However, if $i\in I$, then $G_i$ is also linear of degree $N$ since for any $j>i$ (which exists since $G$ is infinite), we have the embedding $G_i\hookrightarrow G_j/N_j\leq\GL_N(q)$. Now Mal'cev's representation theorem gives that $G$ is linear of degree $N$.
\end{proof}

\subsubsection{A proof that Conjecture~\ref{conj:main_lfsg}(ii) implies Conjecture~\ref{conj:main_lfsg}(i)}

\begin{lemma}\label{lem:inf_loc_smpl_grp_non_trv_id}
Let $G$ be an infinite locally finite group admitting a mixed identity $w \in G\ast\mathbf{F}_r$ and let $(G_i,N_i)_{i\in I}$ be a sectional cover of $G$. Then, for all $n\in\mathbb{N}$ there is $j\in I$ such that for all $i \geq j$, $G_i/N_i$ has cardinality at least $n$, $w\in G_i\ast\mathbf{F}_r$, and the image $w_i=\overline{w}\in (G_i/N_i)\ast\mathbf{F}_r$ is a mixed identity for $G_i/N_i$ of the same content and length as $w$.
\end{lemma}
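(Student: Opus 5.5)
The plan is to exploit only the defining properties of a sectional cover (Definition~\ref{def:sec_cov}) together with the directed structure on $I$ introduced before Lemma~\ref{lem:kegel_convrse}. Write
$$w=c_0x_{i(1)}^{\varepsilon(1)}c_1\cdots c_{l-1}x_{i(l)}^{\varepsilon(l)}c_l$$
in reduced form, and let $S\subseteq G$ be the finite set of all constants $c_0,\ldots,c_l$ together with the inverses of the critical constants. Since $G$ is infinite, for a given $n$ we can pick a finite subgroup $F\le G$ containing $S$ with $\card{F}\ge n$: take the subgroup generated by $S$ and finitely many further elements, which is finite by local finiteness. By Definition~\ref{def:sec_cov}(iii) there is $j\in I$ with $F\subseteq G_j$ and $F\cap N_j=\mathbf{1}$.

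Next I would check that this $j$ works for every $i\geq j$. If $i\ge j$, then either $i=j$, or $G_j\subseteq G_i$ and $N_i\cap G_j=\mathbf{1}$. In both cases $F\subseteq G_i$ and $F\cap N_i=\mathbf{1}$. Consequently $F$ embeds into $G_i/N_i$, which gives $\card{G_i/N_i}\ge\card{F}\ge n$. Moreover all constants lie in $G_i$, so $w\in G_i\ast\mathbf{F}_r$. The images $\overline{c}_k$ of the constants in $G_i/N_i$ satisfy $\overline{c}_k=1$ if and only if $c_k=1$, because $c_k\in F$ and $F\cap N_i=\mathbf{1}$. Hence the reducedness condition (critical constants are nontrivial) persists after projecting. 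The projected word $w_i=\overline{w}$ is therefore still reduced of the same length $l$, it is nontrivial, and its content is $\varepsilon(w)$ unchanged, since projection does not touch the variables.

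Finally I would verify that $w_i$ is a mixed identity. For $\overline{g}_1,\ldots,\overline{g}_r\in G_i/N_i$, choose lifts $g_k\in G_i\subseteq G$. Then $w(g_1,\ldots,g_r)=1$ in $G$, and this product lies in $G_i$ because all its factors do. Reducing modulo $N_i$ gives $w_i(\overline{g}_1,\ldots,\overline{g}_r)=\overline{1}$.

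There is no serious obstacle here. The only delicate point is making sure that nontriviality and reducedness survive the projection, including the edge case where $w$ has no variables. That case is excluded, or trivial, because $w\neq 1$ and $F$ injects into $G_i/N_i$; this is exactly why all constants, and not just the critical ones, are put into $F$.
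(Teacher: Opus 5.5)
Your proposal is correct and follows essentially the same route as the paper: take a finite subgroup $F$ of size at least $n$ containing all constants of $w$, use Definition~\ref{def:sec_cov}(iii) and the order on $I$ to get $F\subseteq G_i$ and $F\cap N_i=\mathbf{1}$ for all $i\geq j$, and then push the identity through $G_i\to G_i/N_i$. Your explicit check that reducedness, and hence length and nontriviality, survives the projection is what the paper means by noting that no constant becomes trivial in $G_i/N_i$.
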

	
\begin{proof} 
Let $F$ be a finite subgroup of cardinality at least $n$ (which exists since $G$ is infinite) and $C\subseteq G$ be the constants in $w$. Then, the group $\langle C,F\rangle$ is finite and therefore contained in some $G_j$ ($j\in I$) such that $\langle C,F\rangle\cap N_j=\mathbf{1}$. Therefore for all $i \geq j$ we also have $\langle C,F\rangle \leq G_i$ and $\langle C,F\rangle\cap N_i=\mathbf{1}$. Hence $\langle C,F\rangle$ is also isomorphic to a subgroup of $G_i/N_i$. In this situation the finite  group $G_i/N_i$ is of cardinality at least $n$. Moreover, $w \in G_i\ast\mathbf{F}_r$ and it is clearly a mixed identity for $G_i$ by restriction. Hence the image of $w$ in $(G_i/N_i)\ast\mathbf{F}_r$ is a mixed identity for $G_i/N_i.$ The content is unchanged by this procedure. Since no constant of $w$ becomes trivial in $G_i/N_i$, the length is also unchanged. This finishes the proof.
\end{proof}
	
\begin{corollary}\label{cor:loc_fin_smpl_grp_no_id}
Conjecture~\ref{conj:main_lfsg}(ii) implies Conjecture~\ref{conj:main_lfsg}(i).
\end{corollary}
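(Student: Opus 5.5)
We argue by contraposition. Assume Conjecture~\ref{conj:main_lfsg}(ii), and suppose some infinite simple locally finite group $G$ satisfies a nonsingular mixed identity $w$. We aim for a contradiction.

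First we settle the setting of the constants. Conjecture~\ref{conj:main_lfsg}(i) concerns mixed identities of $G$ itself, so $w\in G\ast\mathbf{F}_r$, with nontrivial content $\varepsilon(w)\neq 1$ and length $l=\norm{w}$.

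Next we pass to finite simple sections. By Kegel's Theorem~\ref{thm:kegel} and the subsequent lemma, $G$ has a Kegel cover $(G_i,N_i)_{i\in I}$ whose factors $G_i/N_i$ are nonabelian finite simple groups. We then apply Lemma~\ref{lem:inf_loc_smpl_grp_non_trv_id} with $n=1,2,3,\ldots$. For each $n$ it yields an index $j_n$ such that, for all $i\geq j_n$, the factor $G_i/N_i$ has order at least $n$ and the image $w_i$ of $w$ is a mixed identity for $G_i/N_i$. This $w_i$ has the same content as $w$, so it is nonsingular, and the same length $l$.

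Finally we extract infinitely many isomorphism types. Choose indices $i_1\le i_2\le\cdots$ with $i_n\geq j_n$ inductively, so that $\card{G_{i_n}/N_{i_n}}\geq n$. The orders of these factors are then unbounded, and in particular we obtain infinitely many pairwise nonisomorphic nonabelian finite simple groups. Each of them satisfies a nonsingular mixed identity of the fixed length $l$. This contradicts Conjecture~\ref{conj:main_lfsg}(ii).

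We expect no real obstacle; the only point needing care is the length bookkeeping. The conjecture is stated for identities of a fixed length, and Lemma~\ref{lem:inf_loc_smpl_grp_non_trv_id} preserves the length exactly, because no constant becomes trivial in the quotient. Even if reduction in the quotient could shorten the words, the lengths would stay bounded by $l$, and pigeonholing over the finitely many possible lengths would still give a contradiction.
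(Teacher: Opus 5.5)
Your proof is correct and is essentially the paper's argument, which deduces the corollary directly from Kegel's Theorem~\ref{thm:kegel} and Lemma~\ref{lem:inf_loc_smpl_grp_non_trv_id}: a nonsingular mixed identity of $G$ descends to nonsingular mixed identities of the same length on Kegel factors of unbounded order, giving infinitely many isomorphism classes. One minor remark: what you call contraposition is really a proof by contradiction, and the closing pigeonhole fallback is unnecessary, because the lemma preserves length exactly.
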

	
\begin{proof}
This directly follows from Theorems~\ref{thm:kegel} and Lemma~\ref{lem:inf_loc_smpl_grp_non_trv_id}.
\end{proof}

\subsubsection{Proof of the main result on nonlinear simple locally finite groups}

We are now prepared to prove Theorem~\ref{thm:main_slfg}(i): Let $G$ be of type $\Alt_\infty$ or $X_\infty(\overline{q})$ ($X$ classical) and assume $w\in G\ast\mathbf{F}_r$ is a nonsingular mixed identity of length $\norm{w}=l$. Then $G_i/N_i\cong \Alt_{n_i}$ or $X_{n_i}(q_i)$ ($i\in I$) for $n_i\to\infty$ along $I$. Hence by Theorem~\ref{thm:rank_bound} and Lemma~\ref{lem:inf_loc_smpl_grp_non_trv_id} we get a contradiction, since the mixed identity $w_i=\overline{w}\in (G_i/N_i)\ast\mathbf{F}_r$ must have length $l=\Omega\left(\frac{\log(n_i)}{\log\log(n_i)}\right)\to\infty$ along $I$.

\begin{example}[Hall's universal locally finite group]
Recall that Hall's universal locally finite group is constructed in \cite{hall1959some} as the ascending union $\bigcup_{i\in\mathbb{N}}{G_i}$, where $G_0=\Sym_3$ and $G_{i+1}=\Sym(G_i)\supseteq G_i$ via the regular representation of $G_i$. Then $G$ is simple and of type $\Alt_\infty$, which we briefly prove here:

For $i\in\mathbb{N}$, we have $$\Sym_{n_{i+1}}\cong \Sym_{n_i!}\cong G_{i+1}=\Sym(G_i)\supseteq G_i\cong \Sym_{n_i}.$$ Any element of $G_i=\Sym_{n_i}$ has order dividing $l_i\coloneqq\lcm\set{1,\ldots,n_i}$ and hence has a multiple of $c_i\coloneqq n_i!/l_i$ many cycles of the same length in $G_{i+1}=\Sym_{n_i!}$. For $n_i\geq 4$, $c_i$ is even, so that $$G_i\subseteq\Alt(G_i)\eqqcolon G_i'\subset\Sym(G_i)=G_{i+1}.$$ Hence a Kegel cover of $G$ is given by $(G_i',N_i'\coloneqq\mathbf{1})_{i\in\mathbb{N}_+}$, so that $G$ is simple (by Lemma~\ref{lem:kegel_convrse}) and of type $\Alt_\infty$. Theorem~\ref{thm:main_slfg}(i) now implies that $G$ has no nonsingular mixed identity.

However, one can also easily prove that $G$ has no mixed identities at all: Take $w\in G\ast\mathbf{F}_r$ nontrivial. Then $w\in G_{i+1}\ast\mathbf{F}_r=\Sym(G_i)\ast\mathbf{F}_r$ for some $i\in\mathbb{N}$ such that all constants in $w$ are isotypic. In particular, all critical constants must have full support (of size $\card{G_i}$) as they are nontrivial. By Inequality~\eqref{eq:ineq} we must have $\norm{w}_{\rm crit}=\card{G_i}\leq\norm{w}(a\diam(w(G_i^r))+b)$. But $a,b,\norm{w}$ are constant, whereas $\card{G_i}\to\infty$ as $i\to\infty$. Thus $\diam(w(G_i^r))\to\infty$ ($i\to\infty$) as well. Here the absence of mixed identities is basically due to the fact that $G$ is not \emph{finitary linear}, see Definition~\ref{def:ftry_lin} below.

This example shows that a nonlinear simple locally finite group does  not need to have singular mixed identities at all (cf.\ Theorem~\ref{thm:main_slfg}(i)). However, the infinite alternating group $\Alt(\Omega)$ ($\Omega$ an infinite set) has the singular mixed identity $w(x)=[\sigma,x]^{30}$ for $\sigma$ a $3$-cycle.
\end{example}

\subsection{Singular mixed identities}

In this subsection we prove that every nonlinear simple locally finite group which has a (necessarily singular) mixed identity is \emph{finitary linear} (see the definition below). Using the work of J.I.\ Hall~\cite{hall2006periodic}, we classify all cases which can occur, establishing the second part of Theorem~\ref{thm:main_slfg}(i). Let us start with a definition.

\begin{definition}\label{def:ftry_lin}
The group $G$ is called \emph{finitary linear} if and only if it embeds into $\FGL(V)\coloneqq\set{g\in\GL(V)}[\rk(1_V-g)<\infty]$ for some vector space $V$.
\end{definition}

We recall the following classification of finitary linear simple locally finite groups.

\begin{theorem}[J.I. Hall \cite{hall2006periodic}]\label{thm:hall}
A simple locally finite group that has a faithful representation as a finitary linear group is isomorphic to one of:
\begin{enumerate}[(i)]
\item a linear group in finite dimension;
\item an alternating group $\Alt(\Omega)$ with $\Omega$ infinite;
\item a finitary symplectic group $\FSp(V,f)$ over $E$;
\item a finitary special unitary group $\FSU(V,f)$ over $E$;
\item a finitary orthogonal group $\FOmega(V,Q)$ over $E$;
\item a finitary special linear group $\SL(V,W,m)$ over $E$.
\end{enumerate}

Here $E$ is a (possibly finite) subfield of $\overline{\mathbb{F}}_q$. The forms $f$ and $Q$ are nondegenerate on the infinite dimensional $E$-space $V$; and $m$ is a nondegenerate pairing of the infinite dimensional $E$-spaces $V$ and $W$. Conversely, each group in (ii)--$(vi)$ is simple, locally finite, and finitary but not linear in finite dimension.
\end{theorem}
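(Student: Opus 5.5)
This is J.I. Hall's classification, which we use as a black box; were we to reprove it, the plan would be as follows. First the easy converse, then the hard forward direction. For the converse, each group in (ii)--(vi) is a directed union of finite groups $\Alt_n$, $\Sp_{2m}(E_0)$, $\SU_n(E_0)$, $\Omega^\pm_n(E_0)$, $\SL_n(E_0)$, where $E_0\le E$ is finite and the groups act on finite-dimensional nondegenerate subspaces of $V$ (respectively on finite-dimensional subpairs of $(V,W,m)$). Taking these as $G_i$ with $N_i$ their centres gives a Kegel cover in which $N_i$ is the unique maximal normal subgroup, so Lemma~\ref{lem:kegel_convrse} yields simplicity. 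Finitariness is clear, since each element moves only a finite-dimensional subspace. The factors have unbounded rank, so the argument of Lemma~\ref{lem:lin_lfsg}, which finds large alternating sections that are not linear of bounded degree, rules out finite-dimensional linearity.

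For the forward direction, let $G\le\FGL(V)$ be simple, locally finite and not linear in finite dimension. I would first reduce to a suitable module: replace $V$ by a finitary irreducible subquotient on which $G$ still acts faithfully. This uses simplicity together with the fact that the finitary radical and the $[V,G]/C$-type sections behave well for finitary groups. Then $E$ can be taken inside $\overline{\mathbb{F}}_q$ because $G$ is periodic. Next fix a Kegel cover as in Lemma~\ref{lem:red_one_ltype}. Since $G$ is not linear, the Kegel factors have unbounded rank by Lemma~\ref{lem:lin_lfsg}, so only the alternating and classical families survive. The key finite input is a classification, via CFSG and results in the style of Guralnick--Saxl on minimal degrees, of finite quasisimple groups with a module on which some element has commutator space of small dimension relative to the rank. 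It says that alternating groups of large degree and classical groups of large rank occur only on their natural or permutation-type modules, up to a bounded number of trivial composition factors.

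The main step, and the expected obstacle, is the coherence argument. For a finite subgroup $F\le G$ and a large $G_i\supseteq F$, the finitary condition forces $[V,G_i]$ to be finite-dimensional with a unique nontrivial composition factor that is natural for $G_i/N_i$. One must show that these natural modules, together with the invariant forms they carry, are compatible along the directed system. That is, the natural module for $G_i$ embeds as a nondegenerate subspace (or as a nondegenerate subpair, in the linear case $V,W$) of the one for $G_j$, for $j>i$. I would attack this by uniqueness of natural modules and of invariant forms up to scalars (Schur's lemma), and normalise the scalars inductively along a cofinal chain. Countability reductions handle the general case via local systems.

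Passing to the direct limit then produces an infinite-dimensional space with a nondegenerate form or pairing, and $G$ is the union of the corresponding finite classical groups. This gives (iii)--(vi). The alternating case (ii) arises when the permutation modules cohere into a $G$-set $\Omega$ with $G=\Alt(\Omega)$. A final check identifies $G$ with the full finitary derived group, $\FSp$, $\FSU$, $\FOmega$ or $\SL(V,W,m)$, rather than a proper subgroup; this follows because every finite-dimensional piece is eventually covered by some $G_i$.
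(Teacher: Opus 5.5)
Your proposal agrees with the paper: the paper gives no proof of this statement and simply imports it from Hall~\cite{hall2006periodic}, and you likewise use it as a black box, so nothing further is needed. Read your reproof outline only as a heuristic. The coherence of the natural modules and their forms along a suitably chosen Kegel cover, which is the real content of Hall's work, is left unproved there, and the remark that $E\leq\overline{\mathbb{F}}_q$ ``because $G$ is periodic'' overlooks characteristic $0$, where the only infinite possibility is $\Alt(\Omega)$.
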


Here comes the promised classification of nonlinear simple locally finite groups satisfying a mixed identity.

\begin{theorem}\label{thm:class_sing_mxd_id}
Let $G$ be an infinite nonlinear simple locally finite group. Then $G$ admits a singular mixed identity if and only if it is finitary linear and belongs to the following list:
\begin{enumerate}[(i)]
\item $G\cong \Alt(\Omega)$, where $\Omega$ is infinite;
\item $G$ is a finitary linear simple classical group over a finite field 
(that is, $G$ satisfies one of the conclusions (iii)--(vi) of Theorem~\ref{thm:hall}, 
with $E$ a finite field);
\item $G$ is a finitary symplectic group over an infinite locally finite field.
\end{enumerate}
\end{theorem}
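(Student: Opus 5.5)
Two things have to be shown: that a nonlinear simple locally finite group with a mixed identity is finitary linear and lies in the list, and that every group in the list really has a singular mixed identity.

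\emph{The listed groups have singular mixed identities.} For $\Alt(\Omega)$ we use $w(x)=[\sigma,x]^{30}$ with $\sigma$ a $3$-cycle. The element $[\sigma,x]=\sigma^{-1}\sigma^{x}$ is a product of two $3$-cycles, so it is supported on at most six points and its order divides $\lcm\{1,\dots,6\}$, which divides $60$. If the exponent $30$ is too small, we replace it by $60$.

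For the classical groups over a finite field $\mathbb{F}_q$, take $c$ to be a transvection, or a long root element, so that $\rk(1_V-c)\le 2$. Then $[c,x]$ differs from $1_V$ on a subspace of dimension at most $4$. Its restriction therefore lies in a group of bounded size, roughly $\GL_8(q)$. Raising $[c,x]$ to the exponent of that group gives a singular identity whose length is bounded in terms of $q$. This is the infinite-dimensional version of \cite{bradfordschneiderthom2023length}*{Theorem~4} and Proposition~\ref{BddFieldSizeProp}.

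For $\FSp(V,f)$ over an infinite locally finite field we adapt Tomanov's and Gordeev's construction for type $C_n$ (Theorem~\ref{thm:gordeev_short_ids}). The plan is to use constants that are G-small semisimple involutions of the shape $\diag(1,\dots,1,-1,\dots)$ together with long root elements (symplectic transvections). The identity should hold uniformly in $n$ for $\Sp_{2n}(E)$, because the relevant elements act nontrivially on a subspace of bounded dimension. We then pass to the finitary union, since $\FSp(V,f)=\bigcup\Sp(V_0)$ over nondegenerate finite-dimensional subspaces $V_0$.

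\emph{Necessity.} Let $G$ be nonlinear with a mixed identity $w$; by Theorem~\ref{thm:main_slfg}(i), $w$ is singular. First we show that $G$ is finitary. By Lemma~\ref{lem:red_one_ltype} and Lemma~\ref{lem:lin_lfsg}, $G$ has a Kegel cover whose factors are $\Alt_{n_i}$ or $X_{n_i}(q_i)$ with $n_i\to\infty$. By Lemma~\ref{lem:inf_loc_smpl_grp_non_trv_id}, the image $w_i$ is a mixed identity of $G_i/N_i$ of the same length $l$. Inequality~\eqref{eq:ineq} with $\diam=0$ then gives $\norm{w_i}_{\rm crit}\le lb$. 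In the unitary case we also need $b=2q+1$ bounded, which we get from $l=\Omega(q)$ \cite{bradfordschneiderthom2023length}. Hence some critical constant $c$ of $w$ has bounded projective rank, in its support degree, in cofinally many factors. This uniform bound on the degree of the constant $c\in G$ should give a finitary action: its normal closure is $G$, and we combine this with Hall's characterisation that groups with Kegel covers whose factors contain elements of bounded degree are finitary, as in \cite{hall2006periodic} and Meierfrankenfeld's results.

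\emph{This step is the main obstacle.} I would first try to build the finitary module directly as a limit of the natural modules of the factors, using $c$ to control the dimensions. Failing that, I would quote the known theorem: a simple locally finite group with a Kegel cover of classical or alternating type containing an element of bounded degree in all factors is finitary.

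Once $G$ is finitary, Theorem~\ref{thm:hall} places $G$ in cases (ii)--(vi). Case (i) is excluded because $G$ is nonlinear. It remains to exclude $\FSU$, $\FOmega$ and $\SL(V,W,m)$ over an infinite field $E$. For these we take finite-dimensional sections $\SU_n(q_i)$, $\Omega^\pm_n(q_i)$ or $\SL_n(q_i)$ with $q_i\to\infty$ that contain the constants. For each fixed $n$, Theorem~\ref{thm:main_fin}(i),(iii) applies: the simply-laced or twisted types $A$, $D$, ${}^2A$, ${}^2D$ give length $\Omega_n(q_i)\to\infty$. This contradicts the fixed length $l$.

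For $\FOmega$ we can always choose $n$ even, to land in type $D$; for $\SL(V,W,m)$ we land in type $A$.
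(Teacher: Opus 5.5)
Your sufficiency arguments for $\Alt(\Omega)$ and for the classical groups over finite fields are sound. The one-variable word $[c,x]^e$, with $e$ the exponent of a general linear group of dimension $8$ over the field, is in the same spirit as the paper's $v([c,x_1],\ldots,[c,x_r])$. Your exclusion of $\FSU$, $\FOmega$ and $\SL(V,W,m)$ over infinite fields is also sound.

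For $\FOmega$ your route is different from, and simpler than, the paper's. The paper uses odd-dimensional sections $\Omega_{2m+1}(E)$. It argues that the G-small semisimple critical constant forced by Theorem~\ref{thm:main_slfg}(iii) would have to be conjugate to $-1_{2m}\oplus 1$, whose rank is unbounded. You instead pass to even-dimensional sections of type $D$ or ${}^2D$, whose shortest mixed identities have length $\Omega_m(q)$ (Theorem~\ref{thm:main_fin}(i),(iii)); this also treats $p=2$ uniformly. Just choose each section strictly larger than the supports $[V,c]$ of all constants. Then no constant becomes central, and the length is preserved in the simple quotient.

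The genuine gap is the step you yourself single out: showing that $G$ is finitary linear. You have the first ingredient. Since the normal closure of $c$ is $G$, and the norms are conjugation invariant and subadditive, every $g\in G$ satisfies $\norm{g.\pi_i}_i\leq d(g)$ for all large $i$ (this is Lemma~\ref{lem:propagation}). But no module is ever constructed, and your fallback is an unnamed theorem. A ``limit of the natural modules'' in the direct-limit sense does not exist, because the factors $Q_i=G_i/N_i$ carry no compatible equivariant maps between their natural modules $V_i$.

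The paper uses an ultraproduct instead. For a cofinal ultrafilter $\mathcal U$ on $I$, let $K=\prod_{\mathcal U}\mathbb{F}_{q_i}$ and $V=\prod_{\mathcal U}V_i$ (permutation modules over $\mathbb{F}_2$ in the alternating case). Then $G$ acts faithfully and projectively on the infinite-dimensional $K$-space $V$. {\L}o{\'s}' theorem transfers the bounds $d(g)$, so the image lies in $\PFGL(V)$. Lemma~\ref{lem:PFGL-FGL} (no nontrivial scalar is finitary in infinite dimension) then lifts this to a faithful embedding $G\hookrightarrow\FGL(V)$. That $K$ need not be locally finite is harmless, since Hall's theorem concerns finitary representations over arbitrary fields. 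Without this construction, or a precisely stated and cited replacement, the necessity direction is not proved.

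A smaller omission concerns $\FSp(V,f)$ over an infinite field. The involutions $\diag(1,\ldots,1,-1,\ldots)$ you propose do not exist for $p=2$, where \eqref{eq:exc_cses} holds and Gordeev's identity needs a short root element instead. Both characteristics can be handled explicitly and uniformly in the dimension, which also makes your ``should hold uniformly in $n$'' precise. Let $t_v\colon y\mapsto y+f(y,v)v$ be the transvection with centre $v$, and choose $c$ as follows:
\begin{itemize}
\item if $p\neq 2$, a nontrivial involution; then $f(v,v.c)=f(v.c,v)=-f(v,v.c)$, so $f(v,v.c)=0$;
\item if $p=2$, the short root element $c\colon y\mapsto y+f(y,a)b+f(y,b)a$ with $a\perp b$ linearly independent; then $f(v,v.c)=2f(v,a)f(v,b)=0$.
\end{itemize}
Every conjugate $c^g$ has the same shape, so $f(v,v.c^g)=0$ for all $v\in V$ and $g\in\FSp(V,f)$. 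Hence $t_v$ and $t_v^{c^g}=t_{v.c^g}$ commute, and $[t_v,(x^{-1}cx)^{-1}t_v(x^{-1}cx)]$ is a singular mixed identity for $\FSp(V,f)$.
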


\subsubsection{Reduction to finitary linear groups}

First, we show that every group $G$ as in Theorem~\ref{thm:class_sing_mxd_id} is finitary linear. Fix a Kegel cover $(G_i,N_i)_{i\in I}$ of $G$ with simple factors $Q_i\coloneqq G_i/N_i$ and natural maps $\pi_i\colon G_i\twoheadrightarrow Q_i$ ($i\in I$). We write $\norm{\bullet}_i\colon Q_i\to\mathbb{N}$ for the (unnormalized) Hamming norm in the alternating case ($Q_i\cong \Alt_{n_i}$), and for the projective rank norm in the classical case ($Q_i\cong X_{n_i}(q_i)$) on the group $Q_i$ ($i\in I$). The following observation is crucial:

\begin{lemma}\label{lem:propagation}
Assume that there are $1_G\neq c\in G$ and $d<\infty$ such that
$c\in G_i$, $c.\pi_i\neq 1_{Q_i}$, and $\norm{c.\pi_i}_i\leq d$ for all
sufficiently large $i$. Then for every $g\in G$ there is
$d(g)<\infty$ such that $\norm{g.\pi_i}_i\leq d(g)$ for all sufficiently
large $i$.
\end{lemma}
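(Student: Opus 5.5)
We use simplicity of $G$ to write $g$ as a bounded product of conjugates of $c^{\pm1}$, and then use that the norms are conjugation-invariant and subadditive on each factor $Q_i$.

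Fix $g\in G$. If $g=1_G$ there is nothing to show, so assume $g\neq 1_G$. Since $G$ is simple and $c\neq 1_G$, the normal closure $\gennorsubgrp{c}$ is all of $G$. Hence there are $k\in\mathbb{N}$, elements $h_1,\ldots,h_k\in G$ and signs $\eta_1,\ldots,\eta_k\in\set{\pm1}$ with
$$
g=(c^{\eta_1})^{h_1}\cdots(c^{\eta_k})^{h_k}.
$$
The number $k$ and the elements $h_t$ depend only on $g$, not on $i$.

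Let $F\coloneqq\gensubgrp{c,g,h_1,\ldots,h_k}$, which is finite since $G$ is locally finite. By Definition~\ref{def:sec_cov}(iii) there is $j\in I$ with $F\subseteq G_j$ and $F\cap N_j=\mathbf{1}$. For every $i\geq j$ in the directed order we then have $F\subseteq G_i$ and $F\cap N_i=\mathbf{1}$. So $\pi_i$ restricted to $F$ is an injective homomorphism, and
$$
g.\pi_i=\prod_{t=1}^k\bigl((c.\pi_i)^{\eta_t}\bigr)^{h_t.\pi_i}.
$$
Because the index set is directed, the condition $i\geq j$ and the hypothesis "for all sufficiently large $i$" can be satisfied simultaneously.

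Next we need three properties of $\norm{\bullet}_i$ on $Q_i$: conjugation invariance, $\norm{x^{-1}}_i=\norm{x}_i$, and subadditivity $\norm{xy}_i\leq\norm{x}_i+\norm{y}_i$.
\begin{itemize}
\item For the Hamming norm on $\Alt_{n_i}$ (the size of the support) all three are standard.
\item For the projective rank norm, lift to the quasisimple cover acting on the natural module $V_i$. Conjugation invariance holds because $g^{-1}(c-\lambda)g=c^g-\lambda$, and inverse invariance holds because $c^{-1}-\lambda^{-1}=-\lambda^{-1}c^{-1}(c-\lambda)$.
\item For subadditivity in the projective rank case, we use
$$
xy-\lambda\mu=(x-\lambda)y+\lambda(y-\mu).
$$
Hence if $\rk(x-\lambda)$ and $\rk(y-\mu)$ realise the two minima, then $\rk(xy-\lambda\mu)\leq\norm{x}+\norm{y}$. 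This is well defined on $Q_i$ because the centre acts by scalars, which only shift $\lambda$.
\end{itemize}

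Combining these gives $\norm{g.\pi_i}_i\leq k\norm{c.\pi_i}_i\leq kd$ for all sufficiently large $i$. We set $d(g)\coloneqq kd$.

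The only point that needs care is this verification that the projective rank seminorm is a conjugation-invariant, subadditive seminorm on the simple factor. This includes the unitary case, where the field is $\mathbb{F}_{q_i^2}$, and the identification of the factor $X_{n_i}(q_i)$ with a central quotient of the group acting on its natural module. Everything else follows formally from simplicity and the definition of a sectional cover.
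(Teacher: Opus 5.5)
Your proof is correct and follows the paper's argument. You use simplicity to write $g$ as a product of $m$ conjugates of $c^{\pm1}$, note that all the elements involved lie in $G_i$ for large $i$, and combine conjugation invariance with subadditivity of $\norm{\bullet}_i$ to get $d(g)=md$. Your explicit checks of the projective rank seminorm (the identity $xy-\lambda\mu=(x-\lambda)y+\lambda(y-\mu)$ and invariance under inversion) fill in properties the paper only asserts.
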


\begin{proof}
Since $G$ is simple and $c\neq 1_G$, the normal closure of $c$ is all of
$G$. Thus $g=c^{\varepsilon_1h_1}\cdots c^{\varepsilon_m h_m}$ for some
$h_1,\ldots,h_m\in G$ and signs
$\varepsilon_j\in\set{\pm 1}$ ($j\in\set{1,\ldots,m}$). For all large $i$, all elements involved lie
in $G_i$, so $g.\pi_i=
(c.\pi_i)^{\varepsilon_1(h_1.\pi_i)}
\cdots
(c.\pi_i)^{\varepsilon_m(h_m.\pi_i)}$.
The norms are conjugacy invariant and subadditive, so
$\norm{g.\pi_i}_i\leq md\eqqcolon d(g)$.
\end{proof}


\begin{lemma}\label{lem:PFGL-FGL}
Let $V$ be an infinite-dimensional vector space over a field $K$. Then
the natural map
\[
  \FGL(V)=
  \{\,a\in \operatorname{GL}(V)\mid \rk(a-1_V)<\infty\,\}
  \longrightarrow \operatorname{PGL}(V)
\]
identifies $\FGL(V)$ with
\[
  \PFGL(V)\coloneqq
  \{\,\overline{a}\in \operatorname{PGL}(V)\mid\norm{\overline{a}}_{\rm prk}<\infty\,\}.
\]
\end{lemma}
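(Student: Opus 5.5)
The argument is a direct linear-algebra check. Throughout, $\norm{\overline{a}}_{\rm prk}\coloneqq\min\set{\rk(a-\lambda 1_V)}[\lambda\in K^\times]$ for any lift $a\in\GL(V)$ of $\overline{a}\in\operatorname{PGL}(V)$; this is the infinite-dimensional analogue of the projective rank seminorm of Section~\ref{sec:classical_grps_of_lie_type}, with values in $\mathbb{N}\cup\set{\infty}$. I will verify three things in order: that this quantity is well defined on classes, that the natural map is injective on $\FGL(V)$, and that its image is exactly $\PFGL(V)$.

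\emph{Well-definedness.} Replacing $a$ by $\mu a$ with $\mu\in K^\times$ gives $\rk(\mu a-\mu\lambda 1_V)=\rk(a-\lambda 1_V)$. The substitution $\lambda\mapsto\mu\lambda$ permutes $K^\times$, so the minimum over $\lambda$ is unchanged. Hence $\norm{\bullet}_{\rm prk}$ is a function on $\operatorname{PGL}(V)$.

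\emph{Image contained in $\PFGL(V)$.} If $a\in\FGL(V)$, then taking $\lambda=1$ gives $\norm{\overline{a}}_{\rm prk}\leq\rk(a-1_V)<\infty$. So the map lands in $\PFGL(V)$.

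\emph{Injectivity.} This is the only step that uses $\dim V=\infty$. Suppose $a,b\in\FGL(V)$ have the same image, so $a=\lambda b$ for some $\lambda\in K^\times$. Then
\[
(\lambda-1)1_V=\lambda(b-1_V)-(a-1_V),
\]
and the right-hand side has finite rank. The identity on an infinite-dimensional space has infinite rank, so $\lambda=1$ and $a=b$.

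\emph{Surjectivity onto $\PFGL(V)$.} Let $\norm{\overline{a}}_{\rm prk}<\infty$, witnessed by some $\lambda\in K^\times$ with $\rk(a-\lambda 1_V)<\infty$. Put $a'\coloneqq\lambda^{-1}a$. This element is invertible, represents $\overline{a}$, and satisfies $\rk(a'-1_V)=\rk(a-\lambda 1_V)<\infty$. Thus $a'\in\FGL(V)$ maps to $\overline{a}$.

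Since the map is also a group homomorphism, it is an isomorphism $\FGL(V)\cong\PFGL(V)$; in particular $\PFGL(V)$ is a subgroup of $\operatorname{PGL}(V)$. I expect no real obstacle: the only point needing care is the injectivity step, where the infinite dimensionality of $V$ is exactly what excludes nontrivial scalars.
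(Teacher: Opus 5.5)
Your proposal is correct and matches the paper's proof essentially step for step. Injectivity holds because a nonidentity scalar cannot differ from $1_V$ by a finite-rank operator on an infinite-dimensional space. Surjectivity comes from rescaling a representative by $\lambda^{-1}$, where $\lambda$ witnesses finite projective rank. Your extra checks (well-definedness of $\norm{\bullet}_{\rm prk}$ on classes and that the image lies in $\PFGL(V)$) are left implicit in the paper. One cosmetic slip: with $a=\lambda b$, the right-hand side of your displayed identity equals $(1-\lambda)1_V$, not $(\lambda-1)1_V$. This does not affect the rank argument.
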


\begin{proof}
The map is injective. Indeed, if a nontrivial scalar $\lambda 1_V$ lies
in $\FGL(V)$, then $\rk(\lambda 1_V-1_V)<\infty$. Since $V$ is
infinite-dimensional, this forces $\lambda=1$. The map is surjective onto $\PFGL(V)$: Let $\overline{a}\in \PFGL(V)$, and
choose a representative $a\in\GL(V)$. By definition, there
is a scalar $\lambda\in K^\times$ such that
$\rk(a-\lambda 1_V)<\infty$. Then $\lambda^{-1}a\in \FGL(V)$ and maps
to $\overline{a}$. The lift is unique by injectivity, and therefore the
identification is an isomorphism of groups.
\end{proof}

\begin{proposition}\label{prop:ultraproduct-finitary}
Suppose that $G$ is a nonlinear infinite simple locally finite group, such
that the conclusion of Lemma~\ref{lem:propagation} holds. Then
$G$ admits a faithful finitary linear representation over a field.
\end{proposition}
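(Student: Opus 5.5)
The plan is an ultraproduct of the natural modules of the Kegel factors, cut down to a finitary subspace. Since $G$ is nonlinear, Lemma~\ref{lem:lin_lfsg} shows $G$ is of type $\Alt_\infty$ or $X_\infty(\overline{q})$. We may therefore fix a Kegel cover $(G_i,N_i)_{i\in I}$ whose factors $Q_i$ are alternating groups $\Alt_{n_i}$ or classical groups $X_{n_i}(q_i)$ with $n_i\to\infty$ along $I$. In the alternating case, embed $\Alt_{n_i}\leq\GL_{n_i}(\mathbb{F}_2)$ via permutation matrices. Hamming norm $k$ then gives rank of $1-g$ at most $k$, so both cases reduce to projective rank norms on natural modules $V_i=K_i^{n_i}$.

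\emph{Step 1: the ultraproduct module.} Choose an ultrafilter $\mathcal{U}$ on $I$ containing all cones $\set{i}[i\geq j]$; this is possible since $I$ is directed. Set $K\coloneqq\prod_{\mathcal U}K_i$, a field, and $\mathcal{V}\coloneqq\prod_{\mathcal U}V_i$, a $K$-vector space. For $g\in G$ we have $g\in G_i$ for $\mathcal U$-almost all $i$. Lift $g.\pi_i\in Q_i$ to $\hat g_i\in\GL(V_i)$ normalised so that $\rk(\hat g_i-1)=\norm{g.\pi_i}_i\leq d(g)$; this uses the conclusion of Lemma~\ref{lem:propagation}. The lift is unique up to scalars, and unique outright once $n_i>2d(g)$ (compare Lemma~\ref{lem:PFGL-FGL}). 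Put $\hat g\coloneqq(\hat g_i)_{\mathcal U}$. Then $\rk(\hat g-1)\leq d(g)$ on $\mathcal V$, because a uniform rank bound passes to the ultraproduct via the condition that all $(d+1)\times(d+1)$ minors vanish. By uniqueness of the normalised lifts, $g\mapsto\hat g$ is a homomorphism $G\to\FGL(\mathcal V)$. Indeed, $\hat g_i\hat h_i$ and $\widehat{gh}_i$ differ by a scalar, and both have finite rank defect bounded by $d(g)+d(h)$ with $n_i$ large, which forces that scalar to be $1$.

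\emph{Step 2: faithfulness.} Let $1\neq g\in G$. For $\mathcal U$-almost all $i$ we have $g.\pi_i\neq 1$, since $\langle g\rangle\cap N_i=\mathbf 1$ on a cone by Definition~\ref{def:sec_cov}(iii). The hard part is that $\hat g_i\neq 1$ for almost all $i$ does not immediately give $\hat g\neq 1$. The problem is that $\hat g_i-1$ might tend to $0$ entrywise with respect to a fixed basis. This does not happen: $\hat g_i-1$ is a nonzero matrix of rank $\leq d(g)$ over $K_i$, so there is a vector $v_i\in V_i$ with $v_i\hat g_i\neq v_i$. Hence $v\coloneqq(v_i)_{\mathcal U}$ satisfies $v\hat g\neq v$ by Łoś, and the kernel is trivial. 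Alternatively, since $G$ is simple it suffices that the representation is nontrivial, which the same argument gives.

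\emph{Step 3: clean-up, the main obstacle.} The delicate points are twofold. First, $K$ need not be locally finite, although Theorem~\ref{thm:hall} is only needed later and the proposition merely asks for some field. Second, and more seriously, the normalisation of lifts must be chosen compatibly for the homomorphism property. For the $\PSL$, $\PSU$ cases the centre of $\SL$ consists of scalars. Selecting the lift $\lambda^{-1}a$ with $\rk(\lambda^{-1}a-1)$ minimal, as in Lemma~\ref{lem:PFGL-FGL}, is unambiguous once $n_i>2d(g)$. I expect verifying this uniqueness uniformly, in particular for orthogonal groups in characteristic $2$ and for the permutation embedding, to be the main technical point. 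The remaining steps follow from Łoś's theorem.
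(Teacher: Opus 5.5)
Your proof is correct and is essentially the paper's argument: a cofinal ultrafilter, the ultraproduct of the natural modules (or of the $\mathbb{F}_2$-permutation modules), and {\L}o{\'s}' theorem to transfer the uniform rank bound $d(g)$. The only difference is the order of lifting: you normalise the lifts $\hat g_i\in\GL(V_i)$ in each factor and check the homomorphism property there, whereas the paper first builds the projective representation $G\to\PFGL(V)$ and lifts once via Lemma~\ref{lem:PFGL-FGL}; note also that your Step~1 argument is pure linear algebra and type-independent (a scalar $\mu\neq 1$ would force $n_i=\rk((\mu-1)1_{V_i})\leq 2(d(g)+d(h))$), so the worry in Step~3 about orthogonal groups in characteristic $2$ or the permutation embedding, where the only scalar is $1$, is already settled.
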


\begin{proof}
Let $\mathcal U$ be a cofinal  ultrafilter on $I$. First suppose that $Q_i\cong \Alt_{n_i}$, with $n_i\to\infty$. Let
$V_i=\mathbb F_2^{n_i}$, and represent $Q_i$ by permutation matrices on
$V_i$. Put $V=\prod_{\mathcal U}V_i$, a vector space over
$\mathbb F_2$. Since $n_i\to\infty$, the space $V$ is
infinite-dimensional. For $g\in G$, define $g.\rho_i$ to be $g.\pi_i$ for all sufficiently large $i$, and put $g.\rho_i=1$ at
the remaining indices. Since $G=\bigcup_{i\in I}G_i$ and $N_i\cap G_j=1$ for
all large $i$, the relation $(gh).\rho_i=(g.\rho_i)(h.\rho_i)$ holds for
$\mathcal U$-almost all $i$, for every fixed pair $g,h\in G$. Thus the
ultraproduct gives a representation $\rho\colon G\to\operatorname{GL}(V)$. It is
faithful, because a nontrivial element of $G_j$ has nontrivial image in
all $Q_i$ with $i$ sufficiently large. Then $\rk(g.\rho_i-1)\leq d(g)$ for
$\mathcal U$-almost all $i$. By  {\L}o{\'s}' theorem~\cite{los1955remarques},
$\rk(g.\rho-1)\leq d(g)$, so $G.\rho\leq \FGL(V)$.

Now suppose that $Q_i\leq\operatorname{PGL}(V_i)$ are classical groups in
their natural projective representations, with $\dim(V_i)\to\infty$, where
$V_i$ is over the finite field $\mathbb{F}_{q_i}$. Set
$K=\prod_{\mathcal U}\mathbb{F}_{q_i}$ and $V=\prod_{\mathcal U}V_i$. Then $K$ is a
field and $V$ is an infinite-dimensional vector space over $K$. The same
construction as above gives a faithful projective representation $\overline{\rho}:G\to\operatorname{PGL}(V)$: choosing representatives in
$\operatorname{GL}(V_i)$ is harmless, because changing representatives
changes the ultraproduct representative by a scalar in $K^\times$.  {\L}o{\'s}' theorem~\cite{los1955remarques}, gives a projective rank norm of at most $d(g)$ for $g.\overline{\rho}$. Hence
$G.\overline{\rho}\leq\PFGL(V)$. By Lemma~\ref{lem:PFGL-FGL}, this lifts
uniquely to a faithful representation $G\hookrightarrow\FGL(V)$.
\end{proof}

\begin{remark}
The ultraproduct field $K$ need not itself be locally finite. Hall's theorem is
formulated for simple locally finite groups admitting a faithful finitary
linear representation over an arbitrary field. Its conclusion identifies the
classical finitary linear alternatives over a field $E\leq \overline{\mathbb F}_q$,
hence over a locally finite field; in characteristic $0$, the only
finitary linear simple locally finite alternative is the alternating group
\cite{hall2006periodic}.
\end{remark}

\begin{corollary}\label{cor:unbounded-finitary}
If $G$ admits a singular mixed identity and is of type $\Alt_\infty$ or $X_\infty(\overline{q})$, then $G$ is
finitary linear.
\end{corollary}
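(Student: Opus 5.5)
The plan is to verify the hypothesis of Lemma~\ref{lem:propagation}. Once that is done, Proposition~\ref{prop:ultraproduct-finitary} immediately gives a faithful finitary linear representation. By Lemma~\ref{lem:lin_lfsg}, a group of type $\Alt_\infty$ or $X_\infty(\overline{q})$ is nonlinear, so the proposition applies. So it suffices to find $1_G\neq c\in G$ and $d<\infty$ such that $c.\pi_i\neq 1$ and $\norm{c.\pi_i}_i\leq d$ for all large $i$.

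Let $w\in G\ast\mathbf{F}_r$ be a singular mixed identity of length $l$; it is nontrivial, so $l\geq 2$ or $w$ is a nontrivial constant. We may assume $r=1$ via \cite{bradfordschneiderthom2023length}*{Lemma~2.2}, and we conjugate by a constant so that $c_0=c_l=1$. Since $w$ is singular and reduced, cancellation must occur in the content, so $J_-(w)\neq\varnothing$ and $w$ has at least one critical constant $c\neq 1_G$ in $G$.

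By Lemma~\ref{lem:inf_loc_smpl_grp_non_trv_id}, for all large $i$ the image $w_i$ is a mixed identity of $Q_i$ of the same length $l$, and all constants of $w$ (in particular the critical ones) map injectively into $Q_i$, so their images are nontrivial. Now apply Inequality~\eqref{eq:ineq} to $w_i$ on $Q_i$, or rather on the quasisimple cover $\pi_i^{-1}$-preimage in the natural classical group, respectively $\Alt_{n_i}$. Since $w_i(Q_i^r)=1$, the diameter term vanishes, and we get
\[
\norm{w_i}_{\rm crit}\leq l\cdot b.
\]
This means some critical constant of $w_i$ has norm at most $lb$. There are only finitely many critical constants of $w$, so by pigeonhole one fixed critical constant $c$ of $w$ satisfies $\norm{c.\pi_i}_i\leq lb$ for a cofinal set of $i$. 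We pass to this cofinal subcover, which is still a Kegel cover by the Remark; the conclusion of Lemma~\ref{lem:propagation} then holds for it, giving finitary linearity.

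The main obstacle is the constant $b$ in the unitary case, where $b=2q_i+1$ may be unbounded. Here I would use \cite{bradfordschneiderthom2023length}*{Lemmas 8.2 and 8.5}: any mixed identity of $\PSU_n(q^2)$ has length $\Omega(q)$. Since $l$ is fixed, $q_i$ must stay bounded along the cover, so $b$ is bounded.

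A second technical point is that \eqref{eq:ineq} is stated for words in the quasisimple group with $c_0=c_l=1$ and constants in the isometry group. Lifting $w_i$ to the quasisimple cover gives a word whose values are central, not trivial. This only affects $\diam$ by a constant, since central elements have projective rank norm $0$, so the bound still holds with the projective rank seminorm.
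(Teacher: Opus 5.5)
Your proposal is correct and follows the paper's proof essentially step for step. The paper also combines Lemma~\ref{lem:inf_loc_smpl_grp_non_trv_id} with Inequality~\eqref{eq:ineq} (with vanishing diameter) to get a critical constant of norm bounded in terms of $w$ along a cofinal subcover, uses \cite{bradfordschneiderthom2023length}*{Lemmas~8.2 and 8.5} to keep $b=2q_i+1$ bounded in the unitary case, and finishes with Lemma~\ref{lem:propagation} and Proposition~\ref{prop:ultraproduct-finitary}. One harmless slip: conjugation cannot in general make both $c_0$ and $c_l$ trivial, but you do not need this, since stripping the end constants leaves a word with constant image, whose diameter is already $0$.
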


\begin{proof}
Let $w\in G\ast\mathbf{F}_r$ be a singular mixed identity. By
Lemma~\ref{lem:inf_loc_smpl_grp_non_trv_id}, the image
$w_i=\overline{w}\in Q_i\ast\mathbf{F}_r$ of $w$ is a mixed identity of the same length and
content for all $i$ in a cofinal subset of $I$. Equip $Q_i$ with the length function $\norm{\bullet}_i\colon Q_i\to\mathbb{N}$ as defined preceding Lemma~\ref{lem:propagation}. Inequality~\eqref{eq:ineq} gives an upper bound, 
depending only on $w$, for the value $\norm{c_j}_i$ for one critical constant $c_j$ of $w_i$.
As $w$ has only finitely many constants,
after passing to a further cofinal subset there are a critical constant
$c\in G$ and $d<\infty$ such that $\norm{c.\pi_i}_i\leq d$. 

If $G$ is of type $\PSU_\infty(\infty)$, 
so that we may take $Q_i = \PSU_{n_i}(q_i)$ with $q_i\to\infty$, 
then $G$ has no mixed identity by Lemma~\ref{lem:inf_loc_smpl_grp_non_trv_id}, 
since by \cite{bradfordschneiderthom2023length}*{Lemmas~8.2 and 8.5} the length of a shortest mixed identity for $Q_i$ is $\Omega (q_i)$. 
In all other cases, by Remark \ref{rmk:crit_length} 
we may take $d$ to be an absolute constant. 
Lemma~\ref{lem:propagation} gives a bound $\norm{g.\pi_i}_i\leq d(g)$ 
for every element $g$ of $G$.
Proposition~\ref{prop:ultraproduct-finitary} gives the required finitary
linear representation.
\end{proof}

\subsubsection{The finite field case}\label{subsubsec:fin_field_case}

Next we construct mixed identities in the setting of Theorem~\ref{thm:class_sing_mxd_id}(ii): Assume $E=\mathbb{F}_q$ is finite. 
Let $c\in G\leq\FGL(V)$ be nontrivial of order $e$. By \cite{bradfordschneiderthom2023non}*{Lemma~1} there exists $W\leq\fix(c)=\ker(c-1_V)$ of finite codimension and $U\leq V$ a $c$-invariant complement, such that $c=\rest{c}_U\oplus 1_W\in\SL(U)\oplus 1_W$ is of rank length $\rk(c-1_V)\leq \dim(U)\eqqcolon d$. We can choose $v\in\gensubgrp{x_1,\ldots,x_r}=\mathbf{F}_r$ as a law for the finite group $H\coloneqq\SL_{2dr}(q)$. 
Note that $[c,x_1] , \ldots , [c,x_r] \in G\ast\mathbf{F}_r$ freely generate a 
free subgroup of $G\ast\mathbf{F}_r$. 
Thus $w\coloneqq v([c,x_1] , \ldots , [c,x_r])\in G\ast\mathbf{F}_r$ is a (nontrivial) singular mixed identity for $G$. 

Note that since the length of $v$, and hence of $w$, depends only on $q$ and the codimension of $W$ in $V$, the above argument also yields the following, which provides the final ingredient in the proof of Theorem~\ref{thm:OrthCaseImpliesConj} from Section \ref{sec:def}.

\begin{proposition} \label{BddFieldSizeProp}
Any family of finite simple groups of Lie type over fields of bounded size satisfies mixed identities of bounded length.
\end{proposition}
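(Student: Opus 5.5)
Fix a bound $q_0$ on the field size. For $q\le q_0$ we will use the construction of \S\ref{subsubsec:fin_field_case} uniformly: produce one fixed nontrivial constant $c$ of bounded rank, and combine it with a law of a fixed finite group $\SL_{2d}(q)$. The first step is to reduce to classical groups in their natural modules. There are only finitely many exceptional and twisted exceptional types, and over fields of size at most $q_0$ each has only finitely many members. Likewise there are only finitely many groups of bounded rank over bounded fields. Any finite set of groups trivially satisfies mixed identities of bounded length: each individual finite group $Q$ has a law $v_Q$, and $v_Q([c,x])$ with $c\ne 1$ is a nontrivial singular mixed identity. So it suffices to treat $Q=X_n(q)$ classical, acting projectively on its natural module $V_Q=\mathbb{F}_{q'}^{n}$ with $q'\in\{q,q^2\}$, with $n$ arbitrary and $q\le q_0$.

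Next, in each such $Q$, pick a nontrivial element $c$ of bounded projective rank $\le d$, with $d$ independent of $n$. Concretely, take the image of a transvection or long root element in $\PSL$, $\PSp$, $\PSU$, or a long root element ($\rk(c-1)\le 2$) in $\POmega$. Lift $c$ to a preimage $\tilde c$ in the quasisimple cover $\tilde Q\le\GL(V_Q)$ with $\rk(\tilde c-1)\le d$. For any $g\in\tilde Q$, the commutator $[\tilde c,g]=\tilde c^{-1}\tilde c^{g}$ satisfies $\rk([\tilde c,g]-1)\le 2d$. Moreover, $[\tilde c,g]$ acts trivially on the subspace $\fix(\tilde c)\cap\fix(\tilde c^{g})$ of codimension $\le 2d$. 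Hence, following \cite{bradfordschneiderthom2023non}*{Lemma~1}, it is conjugate in $\GL(V_Q)$ into block form $A\oplus 1$ with $A\in\GL_{2d}(q')$.

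The main point is to handle several variables simultaneously. For $g_1,\ldots,g_r$, the elements $[\tilde c,g_k]$ each act trivially on a subspace of codimension $\le 2d$. Their intersection $W$ has codimension $\le 2dr$, but it need not be invariant under all of them. To fix this, use the observation already made in the paper that a shortest mixed identity may be taken in one variable (\cite{bradfordschneiderthom2023length}*{Lemma~2.2}), so that $r=1$. Then choose a complement $U$ of the fixed space $W$ of $[\tilde c,g]$ that is invariant under $[\tilde c,g]$; this is possible since $[\tilde c,g]-1$ has image of dimension $\le 2d$, and one can take $U\supseteq\im([\tilde c,g]-1)$ with $\dim U\le 4d$. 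Now let $v\in\mathbf{F}_1$ be a law for $\GL_{4d}(q')$, and note that this is a fixed group once $d$ and $q_0$ are fixed. Then $v([\tilde c,g])$ is the identity on $U$ and on $V/U$. A unipotent correction is needed here: replace $v$ by $v^{p^{k}}$, where $p^{k}$ is at least $4d$, or simply take $v$ to be a law of the full stabiliser of a flag $U\le V$ restricted to $U$ and its quotients. This makes $v([\tilde c,g])=1$.

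Finally, set $w=v([c,x])\in Q\ast\langle x\rangle$. It is nontrivial because $[c,x]$ generates a free cyclic subgroup of $Q\ast\langle x\rangle$, and its length depends only on $d$ and $q_0$. The expected obstacle is exactly the invariance/unipotent-block issue of the previous step. There I would try the explicit trick of choosing $v$ as a law of the parabolic-type group of matrices preserving the two-step flag $\im([\tilde c,g]-1)\subseteq U$, which again depends only on $d$ and $q_0$.
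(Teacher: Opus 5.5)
Your proposal is correct and follows essentially the paper's argument. You reduce to classical groups of unbounded rank, pick a noncentral constant $c$ of bounded rank (transvections, unitary transvections, Eichler transformations), and apply a law of a fixed finite linear group to $[c,x]$; this gives a nontrivial singular mixed identity whose length depends only on the rank bound and $q_0$.

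The ``obstacle'' you flag is not real, so neither your $p$-power correction nor the reduction to one variable is needed. Suppose $h_1,\ldots,h_k$ satisfy $\sum_i\rk(h_i-1)\le D$. Set $I=\sum_i\im(h_i-1)$, $K=\bigcap_i\ker(h_i-1)$ and $U=I+C$, where $C$ is a complement of $K$, and let $W'$ be a complement of $U\cap K$ in $K$. Then $V=U\oplus W'$, $\dim U\le 2D$, and every element of $\langle h_1,\ldots,h_k\rangle$ has the form $A\oplus 1_{W'}$. This handles $[c,g_1],\ldots,[c,g_r]$ at once.

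For a single commutator $[\tilde c,g]$, $D\le 2d$, so the same construction puts it in block form $A\oplus 1$ with a block of size up to $4d$, not $2d$. Also, a complement of the full fixed space of $[\tilde c,g]$ generally cannot contain $\im([\tilde c,g]-1)$, although your corrected argument does not actually rely on this.
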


\begin{proof}
Since there are boundedly many exceptional finite simple groups over fields of bounded size, it suffices to consider classical groups. As noted above, it suffices to show that for each group $G = \SL_n(q)$, $\Sp_{2n}(q)$, $\SU_n(q^2)$ or $\Omega_n^\bullet(q)$ there is a noncentral element $c \in G$ whose $1$-eigenspace in the standard module for $V$ has codimension bounded independent of $n$. 
This is achieved by elementary transvections, symplectic and unitary transvections and Eichler transformations, respectively.
\end{proof}


\subsubsection{The infinite field case}

Next we establish Theorem~\ref{thm:class_sing_mxd_id}(iii). Let $E$ be infinite. Assume that $G$ is of classical type $Y$ over $E$. Then $G$ is a filtered union of the infinite finite-rank quasisimple groups $G_i\cong Y_{n_i}(E)$ ($i\in I$). 
Hence, when $G$ has a mixed identity $w\in G\ast\mathbf{F}_r$ and $i\in I$ is large enough such that all constants of $w$ are in $G_i\setminus\mathbf{Z}(G_i)$ (this holds for all $i$ sufficiently large as $G$ itself has trivial centre), $w_i=\overline{w}\in Q_i\ast\mathbf{F}_r$ is a mixed identity for $Q_i=G_i/\mathbf{Z}(G_i)$ of bounded length, by Lemma~\ref{lem:inf_loc_smpl_grp_non_trv_id}. By Theorem~\ref{thm:main_fin} these $Q_i$ only satisfy a mixed identity of bounded length if $Q_i$ is one of $\POmega_{2m_i+1}(E)=B_{n_i}(E)$ or $\Sp_{2m_i}(E)=C_{n_i}(E)$. Hence $G$ must be of type $Y\in\set{\FSp,\FOmega}$.

In the symplectic case, when $p\neq 2$, the G-small semisimple constants from Table~\ref{tab:small-elements} may be chosen with bounded rank length; for example, in a hyperbolic basis, one may change signs on one hyperbolic pair and act trivially on the orthogonal complement. Such an element remains G-small
under the standard embeddings $$\Sp_{2m}(E)=C_m(E)\hookrightarrow C_n=\Sp_{2n}(E).$$ The same applies to long and short root elements also for $p=2$. Gordeev's identities~\cite{gordeev1997freedom} (see Theorem~\ref{thm:gordeev_short_ids} above) are therefore compatible with the direct limit and provide a singular mixed identity on $\FSp(V,f)$.

In the remaining case, $G$ must be of type $\FOmega(V,Q)$ and it must be a direct union of the quasisimple groups $G_i=\Omega_{2m_i+1}(E)$ ($i\in I$). As $\Sp_{2m}(E)\cong\Omega_{2m+1}(E)$ for $E$ locally finite of characteristic $p=2$ (since $E$ is perfect), we may assume that $p\neq 2$ as we just dealt with the symplectic case. Assume that $w\in G\ast\mathbf{F}_r$ is a (singular) mixed identity. Then, every constant $c$ of $w$ can be written as $c=\rest{c}_U\oplus 1_{U^\perp}$ for a nonsingular finite-dimensional space $U\leq V$. Hence they are contained in the orthogonal group $G_i=\Omega(W)$ (for $W\leq V$ finite-dimensional nondegenerate and some large $i$). By Theorem~\ref{thm:main_slfg}(iii) and Table~\ref{tab:small-elements}, there must be a critical constant $c$ in $w$ which is of the form $c=-1_{2m_i}\oplus 1\in G_i\subseteq G$. But then $w$ has critical constants conjugate to $-1_{2m_i}\oplus 1\oplus 1_{W^\perp}$ (for all large $i\in I$), which are infinitely many, a contradiction.

\begin{remark}
The nonexistence of \emph{nonsingular} mixed identities for $G$ the finitary symplectic; 
finitary special unitary or finitary orthogonal group of an infinite-dimensional 
formed space $V$ could also be 
proved using the methods of Marimon and Pinsker \cite{marimonpinsker2026all}. 
Using Witt's Lemma, one may show that the \emph{algebraic closure} 
of a finite set $A \subseteq V$ under the action of $G$ (see \cite{marimonpinsker2026all} 
Definition 2.7) is precisely the linear span of $A$. 
From this one deduces that $V$ is a \emph{higher Neumann modular pre-geometry} 
in the sense of \cite{marimonpinsker2026all}, such that Theorem~3.8 of that 
paper is applicable.
\end{remark}

\appendix

\section{An alternative proof}

We present here the following weaker version of Theorem~\ref{thm:no_mxd_ids_qsmpl_alg_grps}.
It relates the work of Golubchik--Mikhalev~\cite{golubchikmikhalev1982generalized}*{Section~1.2} with the approach of Tomanov~\cite{tomanov1985generalized}*{\S 2}, which is why we state it here. The linear operator $h_0=h^{m-1}$ in the proof below plays the same role as the linear operator $y$ in the first reference. Indeed, this justifies that the notion of T-smallness is natural to consider.

\begin{theorem}\label{thm:main_wo_neq2}
Assume $V=\mathbf{V}(F)$ to be a representation of $G=\mathbf{G}(F)$ with a nilpotent element $h\in\mathfrak{g}(F)\leq\End(\mathbf{V}(F))$ admitting a unique largest Jordan block. Let $p$ be large enough depending on $h$. Then $\mathbf{G}(F)$ has no $\Lambda_{v,l}(\mathbf{G}(F),\mathbf{V}(F))$-noncentral mixed identity with constants in $\GL(\mathbf{V}(F))$, for some $v\in\mathbf{V}(F)\setminus\mathbf{0}$ and $l\in\mathbf{V}^\ast(F)\setminus\mathbf{0}$ only depending on $h$.
\end{theorem}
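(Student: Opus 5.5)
We plan to imitate the Golubchik--Mikhalev argument: exponentiate $h$ to get a one-parameter unipotent family in $G$, expand the word as a polynomial in the parameter, and read off a nonzero top coefficient. Since $p$ is large relative to the nilpotency index $m$ of $h$ (that is, $h^m=0\neq h^{m-1}$, with $p\geq m$), the truncated exponential
\[
u(t)\coloneqq\exp(th)=\sum_{k=0}^{m-1}\frac{t^k}{k!}h^k
\]
is defined and lies in $\mathbf{G}(F)$ for all $t\in F$; this is the step where largeness of $p$ is needed, so that $\exp$ of a nilpotent element of $\mathfrak{g}$ is an element of the group. Put $h_0\coloneqq h^{m-1}$. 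Since $h$ has a unique largest Jordan block, $h_0$ has rank one, so $h_0=v\otimes l$, i.e.\ $x.h_0=(x.l)\,v$, for some nonzero $v\in\mathbf{V}(F)$ and $l\in\mathbf{V}^\ast(F)$. Because $h_0^2=0$ we get $v.l=0$, so these are the $v,l$ in the statement.

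Let $w=c_0x^{\varepsilon(1)}c_1\cdots c_{l-1}x^{\varepsilon(l)}c_l$ be a $\Lambda_{v,l}$-noncentral word; after conjugating by $c_0$ we may assume $c_0=1$. The aim is to find $g\in G$ with $w(u(t)^g)$ nonconstant in $t$, hence nontrivial for generic $t$ since $F$ is infinite. Substituting $x=u(t)^g=\exp(th^g)$ makes $w(u(t)^g)$ a polynomial in $t$ of degree at most $(m-1)l$. The coefficient of $t^{(m-1)l}$ is, up to the nonzero scalar $\prod_j (\pm 1)^{m-1}/(m-1)!$, the product
\[
P(g)=h_0^gc_1h_0^gc_2\cdots c_{l-1}h_0^g c_l .
\]
It has rank at most one, since $h_0^g=(v.g)\otimes(l.g^{-\top})$, and it equals $\prod_{j=1}^{l-1}\bigl((v.g).c_j.(l.g^{-\top})\bigr)$ times a rank-one operator. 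So $P(g)\neq0$ exactly when each factor $(v.gc_jg^{-1}).l$ is nonzero, which after replacing $g$ by $g^{-1}$ is the condition in the definition of $\Lambda_{v,l}$.

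The difficulty is that this top coefficient mixes critical and noncritical positions. At $j\in J_0\cup J_+$ the relevant term is $h_0c_jh_0$ with $c_j$ arbitrary and possibly small. The remedy is to choose the degree used at each occurrence of $x$ more cleverly rather than always taking the maximal power $h^{m-1}$. At a $J_+$ junction, for example, $h^{a}c_jh^{b}$ need not vanish for all $(a,b)$ unless $c_j$ has a rigid form. We therefore plan to argue, as Tomanov does, with the open sets
\[
U_j\coloneqq\set{g}[(v.gc_jg^{-1}).l\neq0],
\]
which are nonempty for critical $j$ because $c_j\notin\Lambda_{v,l}$. For the remaining junctions we either absorb $c_j$ by a regrouping (when $c_j$ is noncritical, $x^{\varepsilon}c_jx^{\varepsilon}$ can be handled by taking $g$ generic and using that $h_0c_jh_0$ is nonzero for generic conjugates unless $c_j$ is small, in which case we lower one exponent), or we replace $w$ by an elementary reduction and induct on $l$ as in Section~\ref{sec:classical_grps_of_lie_type}.

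Finally, irreducibility of $\mathbf{G}(F)$ makes $\mathbf{G}(F)$ connected, so finitely many nonempty Zariski-open sets intersect, and a common $g$ gives $P(g)\neq0$. Then $w$ is not an identity, and by Zariski density it is not an identity on $\mathbf{G}(F)$ either. The main obstacle we expect is the noncritical junctions, where a small noncritical $c_j$ can kill the leading coefficient. We would first try a lexicographic choice of exponents $a_k\in\set{0,\ldots,m-1}$ that maximises the total degree among nonvanishing terms, following Golubchik--Mikhalev's treatment with their operator $y$.
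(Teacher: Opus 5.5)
You set things up exactly as the paper does: the truncated exponential of $h$, the rank-one operator $h_0=h^{m-1}$ giving $v,l$ with $v.l=0$, and the leading coefficient. That coefficient is, up to a nonzero scalar, $h_0c_1'h_0\cdots h_0c_{l-1}'h_0$ with $c_j'=c_j^g$, and it is nonzero exactly when $(v.c_j^g).l\neq 0$ for \emph{every} $j\in\{1,\ldots,l-1\}$.

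\textbf{The gap.} It is the step you flag yourself and leave open. The hypothesis only puts the critical constants outside $\Lambda_{v,l}$. A noncritical $c_j\in\Lambda_{v,l}$ kills the top coefficient for every $g$; already $c_j=1$ in $x\,c_j\,x$ gives $h_0h_0=0$. Your proposed remedies do not close this:
\begin{itemize}
\item If you lower exponents at some occurrences, the coefficient of the resulting power of $\lambda$ is a sum over all exponent patterns of that total degree. Nothing in your argument prevents these terms from cancelling.
\item Passing to an elementary reduction and inducting on $l$ is not legitimate. Deleting a constant changes the word map, so knowing the shorter word is not an identity says nothing about $w$.
\end{itemize}

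\textbf{The missing idea.} The paper invokes a substitution trick to make \emph{all} intermediate constants non-small by a change of variable, before expanding anything. Replace $w(x)$ by $w(xd)$ for a suitable $d\in\mathbf{G}(F)$. This has the same length, and it is an identity on $\mathbf{G}(F)$ if and only if $w$ is.
\begin{itemize}
\item A critical constant in $xcx^{-1}$ becomes $dcd^{-1}$, and in $x^{-1}cx$ it is unchanged. Since $\Lambda_{v,l}$ is invariant under $\mathbf{G}(F)$-conjugation, critical constants stay outside $\Lambda_{v,l}$.
\item A noncritical constant becomes $dc_j$ (in $xc_jx$) or $c_jd^{-1}$ (in $x^{-1}c_jx^{-1}$).
\item The sets $\{d : dc\notin\Lambda_{v,l}\}$ and $\{d : cd^{-1}\notin\Lambda_{v,l}\}$ are Zariski open and nonempty. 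For instance, if $dc\in\Lambda_{v,l}$ for all $d$, then $((v.d).c).l=0$ for all $d\in\mathbf{G}(F)$. Since $v.\mathbf{G}(F)$ spans the irreducible module and $c$ is invertible, this forces $l=0$, a contradiction.
\item By connectedness, a single $d$ works for all $j$ simultaneously.
\end{itemize}
After this reduction every $c_j$ with $1\leq j\leq l-1$ lies outside $\Lambda_{v,l}$. You then choose $g$ in the intersection of all the open sets $U_j$, and your leading-coefficient computation finishes the proof. No lexicographic choice of exponents is needed.
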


\begin{proof}[Proof of Theorem~\ref{thm:main_wo_neq2}]
Let $h\in\mathfrak{g}(F)$ be a nilpotent element with a single largest Jordan block $J_m(0)$. Assume that $m\in\set{2,\ldots,\min(n,p)}$ and set $h_0=h^{m-1}$. Then, $h_0$ is of Jordan type $J_2(0)\oplus J_1(0)^{\oplus(n-2)}$. Thus it is of shape $lv$ for some $v\in\mathbf{V}(F)$ and $l\in\mathbf{V}^\ast(F)$ with $v.l=0$. We may assume that 
$$
w(x)=c_0x_{i(1)}^{\varepsilon(1)}c_1\cdots c_{l-1}x^{\varepsilon(l)}c_l\in\GL(\mathbf{V}(F))\ast\gensubgrp{x},
$$
where $c_j\notin\Lambda\coloneqq\Lambda_{v,l}(\mathbf{G}(F),\mathbf{V}(F))$ for all $j\in\set{1,\ldots,l-1}$ by applying the same substitution trick as in \cite{bradfordschneiderthom2023length}*{Lemma~2.2}. Assume that $c\notin\Lambda$. Then there is $g\in\mathbf{G}(F)$ such that $(v.c^g).l\neq 0$ by definition, and the set of all such elements $g$ is Zariski open and nonempty in $\mathbf{G}(F)$ (since we can write $\adj(g)^\top$ for $g^{-1}$). Hence we find $g\in\mathbf{G}(F)$ such that $	(v.c_j^g).l\neq 0$ holds simultaneously for all $j\in\set{1,\ldots,l-1}$. But the $c_j'=c_j^g\in\GL(\mathbf{V}(F))$ are precisely the intermediate constants in 
$$
w'(x)=w(gxg^{-1})=gx^{\varepsilon(1)}c_1^g\cdots c_{l-1}^g x^{\varepsilon(l)} g^{-1}=gx^{\varepsilon(1)}c_1'\cdots c_{l-1}'x^{\varepsilon(l)}g^{-1}.
$$ 
Replace $w$ by $w'$ and remove $c_0'=g$ and $c_l'=g^{-1}$ from it (as we may). Then the operators $h_0 c_j h_0$ (for $j\in\set{1,\ldots,l-1}$) have the same image and kernel as $h_0$ (and hence are equal to $\alpha_j h_0$ for some $\alpha_j\in F^\times$). Hence, one obtains that 
$$
h_0c_1h_0\cdots h_0c_{l-1}h_0=\beta h_0\neq 0
$$
for some $\beta\in F^\times$. But this operator $h_0c_1h_0\cdots h_0c_{l-1}h_0\in\End(\mathbf{V}(F))$ is precisely plus or minus the leading coefficient (of $\lambda^{(m-1)l}$) in the polynomial 
$$
v(\lambda)\coloneqq w\left(1+\lambda h+\frac{1}{2}\lambda^2 h^2+\cdots+\frac{1}{(m-1)!}\lambda^{m-1}h^{m-1}\right)\in\End(\mathbf{V}(F))[\lambda]
$$ 
up to a factor $(1/(m-1)!)^l\neq 0$. So \cite{bradfordschneiderthom2023length}*{Lemma~4.4} applies.
\end{proof}

In Theorem~\ref{thm:main_wo_neq2}, if $p\neq 2$, one can e.g.\ take the adjoint module $\mathbf{V}(\alpha_0)$ and for $h$ the operator $\ad(X_{\alpha_0})$ for the long root $\alpha_0$. Then $m=3$, since $h$ has a unique largest Jordan block of size three.

\section{Remarks on Schwartz--Zippel bounds}

For the interested reader, we give here a self-contained argument establishing Proposition~\ref{BreGreGurTaoProp} in the untwisted and the Steinberg case, providing an alternative proof for \cite{breuillardgreenguralnicktao2015expansion}*{Proposition~5.4}.

\subsection{The untwisted case}

Here we have the inclusion $G'\leq G=\mathbf{G}(q)\leq\mathbf{G}(\overline{\mathbb{F}}_q)$ of linear groups. By the standard Schwartz--Zippel type bound 
$$
\card{V(\mathbb{F}_q)}\leq\deg(V)q^{\dim(V)}
$$ holds for any affine variety $V\subseteq\mathbf{V}(\overline{\mathbb{F}}_q)$.
For $V$ we take the variety $V_{\overline{\mathbb{F}}_q}(p)\cap\mathbf{G}(\overline{\mathbb{F}}_q)$, which yields using B\'ezout's theorem
$$
\card{V(\mathbb{F}_q)}=\card{V_{\mathbb{F}_q}(p)\cap\mathbf{G}(q)}\leq\deg(p)\deg(\mathbf{G})q^{\dim(\mathbf{G})-1},
$$
since $\mathbf{G}(\overline{\mathbb{F}}_q)$ is connected and $p$ does not vanish on it by assumption, so that the intersection $V_{\overline{\mathbb{F}}_q}(p)\cap\mathbf{G}(\overline{\mathbb{F}}_q)$ has one dimension less than $\mathbf{G}(\overline{\mathbb{F}}_q)$. Hence
$$
v(p,G)=\frac{\card{V_{\mathbb{F}_q}(p)\cap G}}{\card{G}}\leq\deg(p)\deg(\mathbf{G})\frac{q^{\dim(\mathbf{G})-1}}{\card{G}}=O_{\mathbf{G}}(\deg(p) q^{-1}).
$$
As $G'$ has bounded index in $G$, we obtain $v(p,G')\leq v(p,G)[G:G']=O_{\mathbf{G}}(\deg(p)q^{-1})$ as required in Proposition~\ref{BreGreGurTaoProp}.

\subsection{The Steinberg case}

We recall that the \emph{generalized Steinberg group} ${}^a\mathbf{G}(E)=\mathbf{G}(E)_\delta=\fix(\delta)$ is the set of fixed points of an endomorphism $\delta=\sigma\alpha\in\End(\mathbf{G}(E))$, where $\alpha\in\End(E)$ is a field endomorphism and $\sigma$ is a diagram automorphism of order $a\in\set{2,3}$, both of which are suppressed in the notation. In the case that $\alpha$ is the Frobenius $x\mapsto x^q$ on $E=\overline{\mathbb{F}}_q$, we obtain the ordinary Steinberg groups. By restricting the field $E$, we may assume that $\alpha^a=\id$. Indeed $\fix(\delta)=\fix(\sigma\alpha)\subseteq\fix(\delta^a)=\fix(\alpha^a)=E_{\alpha^a}$, since $\alpha$ and $\sigma$ commute. Hence ${}^a\mathbf{G}(E)={}^a\mathbf{G}(E_{\alpha^a})$.

\medskip

The aim of the following argument is to write the finite group ${}^a\mathbf{G}(q^a)$ as the $\mathbb{F}_q$-points of a variety $V\subseteq\mathbf{M}_n(\overline{\mathbb{F}}_q)$. This variety will be the set $V_{\overline{\mathbb{F}}_q}(\mathcal{Q}')$ below.
    
Next, we describe the locally finite fields $E$ which admit an automorphism $\alpha$ of order $a$. The proof of the subsequent lemma is left to the reader.
	
\begin{lemma}\label{lem:ext_fld_aut}
Let $\mathbb{F}_{q^a}$ be a finite field with an automorphism $\alpha$ of order $a$. Then $\alpha$ can be extended to a field automorphism of the field $\mathbb{F}_{q^{ab}}$ of order $a$ if and only if $\gcd(a,b)=1$. If this condition is fulfilled, the extension is unique.
\end{lemma}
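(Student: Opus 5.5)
We have to prove Lemma~\ref{lem:ext_fld_aut}: an automorphism $\alpha$ of order $a$ of $\mathbb{F}_{q^a}$ extends to an automorphism of order $a$ of $\mathbb{F}_{q^{ab}}$ if and only if $\gcd(a,b)=1$, and then the extension is unique. The plan is to reduce everything to cyclic Galois groups. The group $\mathrm{Gal}(\mathbb{F}_{q^{ab}}/\mathbb{F}_p)$ is cyclic, generated by the absolute Frobenius. The subgroup $H=\mathrm{Gal}(\mathbb{F}_{q^{ab}}/\mathbb{F}_q)$ is cyclic of order $ab$, generated by $\phi\colon x\mapsto x^q$. An automorphism of order $a$ of $\mathbb{F}_{q^a}$ generates a subgroup of order $a$ of $\mathrm{Aut}(\mathbb{F}_{q^a})$. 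I would read $\alpha$ as a generator of $\mathrm{Gal}(\mathbb{F}_{q^a}/\mathbb{F}_q)$, i.e.\ fixing $\mathbb{F}_q$, which is the situation in the Steinberg setting. Then $\alpha=\phi_a^k$ for some $k$ coprime to $a$, where $\phi_a$ is the restriction of $\phi$.

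Restriction gives the surjection $\mathrm{res}\colon H\cong\mathbb{Z}/ab\to\mathbb{Z}/a$, sending $\phi^m\mapsto\phi_a^m$. Any extension $\tilde\alpha$ of $\alpha$ restricts to $\alpha$ on $\mathbb{F}_{q^a}\supseteq\mathbb{F}_q$, so it fixes $\mathbb{F}_q$ and lies in $H$. The extensions of $\alpha$ are therefore exactly the elements $m\in\mathbb{Z}/ab$ with $m\equiv k\pmod a$. The question becomes: how many of these $m$ have additive order exactly $a$ in $\mathbb{Z}/ab$?

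The elements of order dividing $a$ in $\mathbb{Z}/ab$ form the unique subgroup $b\mathbb{Z}/ab$. So we need $m=bt$ with $bt\equiv k\pmod a$.

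If $\gcd(a,b)=1$, then $t\equiv b^{-1}k\pmod a$ determines $t\in\mathbb{Z}/a$ uniquely, so there is exactly one such $m$. Its order is exactly $a$, because its restriction $\alpha$ already has order $a$. This gives existence and uniqueness.

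If $g=\gcd(a,b)>1$, then $bt\equiv k\pmod a$ forces $g\mid k$. This contradicts $\gcd(k,a)=1$, so no extension of order $a$ exists.

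The main obstacle is not computational. It is making sure the extension must lie in $H$, which I handle by noting that it agrees with $\alpha$ on $\mathbb{F}_q$. It is also the interpretive point that $\alpha$ fixes $\mathbb{F}_q$; without that, the same cyclic-group argument runs in $\mathrm{Gal}(\mathbb{F}_{q^{ab}}/\mathbb{F}_p)$ with the analogous coprimality condition. I would state this convention explicitly at the start.
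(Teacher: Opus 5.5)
Your proof is correct. The paper gives no proof of this lemma (it is ``left to the reader''), so there is no argument to compare against. Your reduction to the cyclic group $\mathrm{Gal}(\mathbb{F}_{q^{ab}}/\mathbb{F}_q)\cong\mathbb{Z}/ab\mathbb{Z}$ is the standard way to fill it in. It identifies the extensions of $\alpha=\phi_a^k$ with the residues $m\equiv k \pmod a$, and the elements of order dividing $a$ with $b\mathbb{Z}/ab\mathbb{Z}$. The unique solution $m=bt$ when $\gcd(a,b)=1$ is right. So is the obstruction $\gcd(a,b)\mid k$ when $\gcd(a,b)>1$.

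One simplification: you do not need to impose the convention that $\alpha$ fixes $\mathbb{F}_q$ as a hypothesis, because it follows automatically. Write $q=p^e$. The group $\mathrm{Aut}(\mathbb{F}_{q^a})=\mathrm{Gal}(\mathbb{F}_{p^{ea}}/\mathbb{F}_p)$ is cyclic of order $ea$, so it has a unique subgroup of order $a$, and this must be $\mathrm{Gal}(\mathbb{F}_{q^a}/\mathbb{F}_q)$. Hence every automorphism of order $a$ of $\mathbb{F}_{q^a}$ generates this group and in particular fixes $\mathbb{F}_q$. With this observation your argument proves the lemma exactly as stated. Your fallback remark about redoing the computation in $\mathrm{Gal}(\mathbb{F}_{q^{ab}}/\mathbb{F}_p)$ can then be dropped.
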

	
If $\mathcal{B}\subseteq\set{b\in\mathbb{Z}_+}[\gcd(a,b)=1]$ is a directed subset of $(\mathbb{Z}_+,\mid\,)$, we can construct a field $E=E_{\mathcal{B}}$ as the union of all fields $\mathbb{F}_{q^{ab}}$ ($b\in\mathcal{B}$). It has exactly $\varphi(a)$  many automorphisms of order $a$ which are all powers of each other. Fix one of them and call it $\alpha$. Write $E_\alpha=\fix(\alpha)=\bigcup_{b\in\mathcal{B}}\mathbb{F}_{q^b}$ for its fixed field. Conversely, every locally finite field $E$ admitting an automorphism of order $a$, is of the above type $E_{\mathcal{B}}$ for suitable $\mathcal{B}$ and $q$. When we require $\mathcal{B}$ to be downclosed, it is a one-to-one correspondence.

The following lemma shows that every infinite generalized Steinberg group ${}^a\mathbf{G}(E)$ is Zariski dense in the ambient algebraic group $\mathbf{G}(\overline{\mathbb{F}}_q)$.
	
\begin{lemma}\label{lem:twst_grp_zar_dense}
Recall that $\mathbf{G}$ is assumed to be $E$-split. Let $a\in\set{2,3}$. Then ${}^a\mathbf{G}(E)$ is Zariski dense in $\mathbf{G}(\overline{\mathbb{F}}_q)$ when $E$ is infinite.
\end{lemma}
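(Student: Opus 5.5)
The idea is to show that the Zariski closure $\overline{H}$ of $H\coloneqq{}^a\mathbf{G}(E)$ in $\mathbf{G}(\overline{\mathbb{F}}_q)$ is a closed subgroup containing enough one-parameter unipotent subgroups to generate everything. I assume, as in the paper, that $\delta=\sigma\alpha$ with $\alpha^a=\id$ on $E$. The order of steps is: identify infinite root subgroups in $H$, take their Zariski closures, and conclude.

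\textbf{Step 1 (root subgroups of $H$).} Fix a $\delta$-stable maximal torus and Borel subgroup, so that $\sigma$ permutes the root subgroups $X_\beta$ through the graph automorphism $\rho$ of the Dynkin diagram. For every root $\beta$, I would exhibit a subgroup $U_\beta\leq H$ whose Zariski closure contains $X_\beta(\overline{\mathbb{F}}_q)$. This is organised by $\rho$-orbits. If $\rho(\beta)=\beta$, then $\{x_\beta(t)\mid t\in E_\alpha\}\leq H$ (up to the sign convention in $\sigma$). Since $E$ is infinite, the union $E_\alpha=\bigcup_{b\in\mathcal{B}}\mathbb{F}_{q^b}$ is infinite as well. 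An infinite subset of the one-dimensional irreducible curve $X_\beta\cong\mathbb{G}_a$ is Zariski dense, so its closure is all of $X_\beta$.

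\textbf{Step 2 (nontrivial orbits).} If $\beta\neq\rho(\beta)$ and $\beta+\rho(\beta)$ is not a root, the elements $\prod_{k<a}x_{\rho^k\beta}(\alpha^k(t))$ for $t\in E$ lie in $H$. Their closure is the image of the closure of $\{(t,\alpha(t),\ldots)\mid t\in E\}$ in $\mathbb{G}_a^a$. I expect this to be the main obstacle, because this set is \emph{not} Zariski dense in any naive algebraic sense: on each $\mathbb{F}_{q^{ab}}$ the map $\alpha$ is a Frobenius power, so the graph lies on a curve $y=x^{q^{k}}$. However, the exponent $q^{k}$ varies with $b$. The claim I would prove is that a polynomial $f(x,y)$ of degree $D$ vanishing on $\{(t,\alpha(t))\mid t\in\mathbb{F}_{q^{ab}}\}$ must vanish identically once $q^{ab}$ is large compared with $D$ and the Frobenius exponent. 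To do this, I would choose $b\in\mathcal{B}$ so that the Frobenius representing $\alpha$ has exponent $q^{b'}$ with $b'\geq b$, so that $q^{b'}>D$. Then $f(t,t^{q^{b'}})$, after reduction modulo $t^{q^{ab}}-t$, is a nonzero polynomial of degree less than $q^{ab}$, and so it cannot vanish on all of $\mathbb{F}_{q^{ab}}$. If this proves awkward, I would instead use the following fallback. The closure $\overline{H}$ is a closed subgroup that is normalised by the infinite subgroups from Step~1, and it contains the rank-one subgroups $\SL_2(E_\alpha)$, $\SU_3(E_\alpha)$ or $\SL_2(E)$ attached to each $\rho$-orbit. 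By Steinberg's structure theory these groups are Zariski dense in the corresponding Levi-type subgroups $\SL_2\times\SL_2$, $\SL_3$, and so on.

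\textbf{Step 3 (conclusion).} The orbits where $\beta+\rho(\beta)$ is a root (type ${}^2A_{2m}$) are handled by the $\SU_3$-subgroup exactly as in Step~2. At this point $\overline{H}$ is a closed subgroup containing $X_\beta(\overline{\mathbb{F}}_q)$ for every root $\beta$. Since $\mathbf{G}(\overline{\mathbb{F}}_q)$ is generated by its root subgroups, it follows that $\overline{H}=\mathbf{G}(\overline{\mathbb{F}}_q)$.
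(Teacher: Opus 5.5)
Your proof is correct, but its key step differs from the paper's. The paper works with the Lie algebra of the closure $\overline{H}$, which is an $\overline{\mathbb{F}}_q$-space. For each fixed $\lambda\in E$, the group $H$ contains the elements $\prod_k x_{\rho^k\beta}(\alpha^k(\lambda)s)$ with $s$ running through the infinite field $E_\alpha$. Hence $\operatorname{Lie}(\overline{H})$ contains every multiple of $X_{\beta,\lambda}=\sum_k\alpha^k(\lambda)X_{\rho^k\beta}$. A Moore determinant $\det(\lambda_i^{q^k})\neq 0$, for suitable $\lambda_1,\ldots,\lambda_a\in\mathbb{F}_{q^a}$, then shows that these vectors span all root vectors. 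So $\operatorname{Lie}(\overline{H})=\mathfrak{g}$, and connectedness finishes. The paper therefore never needs density of the full twisted diagonal $\{(t,\alpha(t),\ldots,\alpha^{a-1}(t))\mid t\in E\}$: it uses a fixed finite set of directions, lets only the $\alpha$-fixed scalar vary, and gets the rest from linear algebra.

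You instead prove that the twisted diagonal itself is Zariski dense in $\mathbb{G}_a^a$. You exploit that the Frobenius exponent representing $\alpha$ on $\mathbb{F}_{q^{ab}}$ grows with $b$. You then conclude purely at the level of closed subgroups via generation by root subgroups, with no passage from Lie algebras to dimensions. You also treat the ${}^2A_{2m}$ orbits explicitly, which the paper's sketch does not single out. When writing it up, make the degree count explicit. For $q^b>D$, the exponents $i_0+i_1q^b+\cdots+i_{a-1}q^{(a-1)b}$ with $0\leq i_k<q^b$ are pairwise distinct and smaller than $q^{ab}$. So $f(t,t^{q^b},\ldots,t^{q^{(a-1)b}})$ is a nonzero polynomial of degree $<q^{ab}$, and reduction modulo $t^{q^{ab}}-t$ is harmless. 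For $a=3$, the coordinates $t,\alpha(t),\alpha^2(t)$, viewed as functions on $\mathbb{F}_{q^{3b}}$, are a permutation of $t,t^{q^b},t^{q^{2b}}$, so no other exponent pattern occurs.

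Two small corrections. First, drop the fallback in Step~2. Density of the twisted $\SL_2(E)$ in $\SL_2\times\SL_2$, or of $\SU_3(E_\alpha)$ in $\SL_3$, is the rank-one case of the very lemma you are proving, so invoking it is circular. Second, in Step~3, note that when $\beta+\rho\beta$ is a root, $x_\beta(t)x_{\rho\beta}(\alpha(t))$ (up to signs) does not lie in $H$ by itself. First obtain $X_{\beta+\rho\beta}(\overline{\mathbb{F}}_q)\subseteq\overline{H}$ from the infinite centre of the ${}^2A_2$ root group. Then run your density argument inside $X_\beta X_{\rho\beta}X_{\beta+\rho\beta}\cong\mathbb{A}^3$.
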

	
\begin{proof}
The Lie algebra of the group ${}^a\mathbf{G}(E)$ is the $E_\alpha$-Lie subalgebra ${}^a\mathfrak{g}(E)$ of $\mathfrak{g}(E)$ generated by the elements $X_{\beta,\lambda}\coloneqq\lambda^{\alpha^0} X_{\beta.\sigma^0}+\cdots+\lambda^{\alpha^{a-1}}X_{\beta.\sigma^{a-1}}$ for $\lambda\in E$ and $X_\beta$ a root vector of $\mathfrak{g}(E)$ such that $\beta.\sigma\neq\beta$, and elements $\lambda X_\beta$ for $\lambda\in E_\alpha$ and $\beta\in\fix(\sigma)$, where $\sigma$ is the corresponding diagram automorphism of order $a$. Since $\overline{{}^a\mathbf{G}(E)}$ are the $\overline{\mathbb{F}}_q$-points of an algebraic group, its Lie algebra is an $\overline{\mathbb{F}}_q$-vector space. Hence, it must contain all elements $\mu X_{\beta,\lambda}$ for $\mu\in\overline{\mathbb{F}}_q$, $\lambda\in E$ and $\beta\neq\beta.\sigma$, and the elements $\mu X_\beta$ for $\beta\in\fix(\sigma)$.
The $X_{\beta,\lambda_i}$ for $i\in\set{1,\ldots,a}$ are $E$-linearly independent for suitable elements $\lambda_i$. We have
$$
\det\begin{pmatrix}
\lambda_1 & \cdots &\lambda_a\\
\vdots & \ddots & \vdots\\
\lambda_1^{q^{a-1}} & \cdots & \lambda_a^{q^{a-1}}
\end{pmatrix}\neq 0
$$
for some $\lambda_1,\ldots,\lambda_a\in\mathbb{F}_{q^a}\subset E$, as it is a homogeneous polynomial of degree $\frac{q^a-1}{q-1}<q^a$.
Hence the root element $\mu X_\beta$ (for all $\mu\in\overline{\mathbb{F}}_q$ and roots $\beta$) is contained in the Lie algebra of $\overline{{}^a\mathbf{G}(E)}$, so the latter must be all of $\mathbf{G}(\overline{\mathbb{F}}_q)$ by connectedness.
\end{proof}

Let $\alpha\colon\mathbb{F}_{q^a}\to\mathbb{F}_{q^a}$; $x\mapsto x^q$ be the $q$-Frobenius automorphism (of order $a$). Write $n$ for the dimension of the chosen module. Fix a set of finitely many \emph{twisted} polynomials
$$
\mathcal{Q}\subseteq \mathbb{F}_{q^a}[X_{ij}^{\alpha^k}\mid i,j\in\set{1,\ldots,n}, k\in\set{0,\ldots,a-1}]
$$ 
such that ${}^a\mathbf{G}(q^{ab})=V_{\mathbb{F}_{q^{ab}}}(\mathcal{Q})$ for all $b\in\mathbb{Z}_+$ with $b\equiv 1$ modulo $a$. (Here each variable that occurs, gets a formal exponent from $\gensubgrp{\alpha}$.) We extend the operator $V$ to twisted polynomials in the natural way.
	
Let us briefly explain how we get the finite set $\mathcal{Q}$: By definition, ${}^a\mathbf{G}(q^{ab})=\set{g\in\mathbf{G}(\overline{\mathbb{F}}_q)}[g.\delta_b=g]\subseteq\mathbf{G}(q^{ab})$, where $\delta_b=\alpha_b\sigma$ is the Steinberg endomorphism of $\mathbf{G}(\overline{\mathbb{F}}_q)$, which restricts to an automorphism of the finite group $\mathbf{G}(q^{ab})$. Here, we have $\alpha_b\colon \overline{\mathbb{F}}_q\to\overline{\mathbb{F}}_q$; $x\mapsto x^{q^b}$, which restricts to $\alpha$ on $\mathbb{F}_{q^a}$ (since we assume that $b\equiv 1$ modulo $a$). The map $\sigma$ is a diagram automorphism of order $a$. Extend $\alpha$ to $E\coloneqq E_{\mathcal{B}}$ by Lemma~\ref{lem:ext_fld_aut}, where $\mathcal{B}=\set{b\in\mathbb{Z}_+}[b\equiv 1\text{ mod }a]$, by setting $\alpha\coloneqq\bigcup_{b\in\mathcal{B}}{\rest{\alpha_b}_{\mathbb{F}_{q^{ab}}}}$. Write $\delta=\alpha\sigma$. Then 
$$
{}^a\mathbf{G}(q^{ab})=\set{g\in\mathbf{G}(q^{ab})}[g.\delta=g]=V_{\mathbb{F}_{q^{ab}}}(\mathcal{P}\cup\mathcal{P}_+),
$$
where $\mathcal{P}\subseteq\mathbb{F}_{q^a}[X_{ij}]$ is finite and does not depend on $b$ such that $V_{\mathbb{F}_{q^{ab}}}(\mathcal{P})=\mathbf{G}(q^{ab})$, and $\mathcal{P}_+\subseteq\mathbb{F}_{q^a}[X_{ij}^{\alpha^k}]$ are the $\gensubgrp{\alpha}$-twisted polynomial equations coming from the condition $g.\delta=g$. Hence we can take $\mathcal{Q}\coloneqq\mathcal{P}\cup\mathcal{P}_+$.
	
We define the ring homomorphism $\varphi\colon\overline{\mathbb{F}}_q[X_{ij}]\to\overline{\mathbb{F}}_q[Y_{ij}^k]$ by
$$
X_{ij}\mapsto\sum_{l=0}^{a-1}e_l Y_{ij}^l,
$$
for all $i,j$, where $e_0,\ldots,e_{a-1}$ form an $\mathbb{F}_q$-basis of $\mathbb{F}_{q^a}$. Extend $\varphi$ to all of $\overline{\mathbb{F}}_q[X_{ij}^{\alpha^k}]$ by setting $X_{ij}^{\alpha^k}\mapsto\sum_{l=0}^{a-1}(e_l.\alpha^k) Y_{ij}^l$ for all $i,j,k$. Let $b\in\mathbb{Z}_+$ with $b\equiv 1$ modulo $a$, and $\mathcal{R}\subseteq\mathbb{F}_{q^a}[X_{ij}^{\alpha^k}]$ be a finite set of twisted polynomial equations. Define 
$$
\mathcal{R}'\coloneqq\set{r_0,\ldots,r_{a-1}\in\mathbb{F}_q[Y_{ij}^k]}[\exists r\in\mathcal{R}.\varphi\colon r=\sum_{l=0}^{a-1}{e_lr_l}].
$$ 	
Then we have
\begin{equation}\label{eq:var_corresp}
V_{\mathbb{F}_{q^{ab}}}(\mathcal{R})\cong_\varphi V_{\mathbb{F}_{q^b}}(\mathcal{R}')\text{ and }
V_E(\mathcal{R})\cong_\varphi V_{E_\alpha}(\mathcal{R}'),
\end{equation}
so that for $\mathcal{R}=\mathcal{Q}$ we obtain
\begin{equation}\label{eq:vars}
{}^a\mathbf{G}(q^{ab})=V_{\mathbb{F}_{q^{ab}}}(\mathcal{Q})\cong_{\mathbb{F}_{q^b}} V_{\mathbb{F}_{q^b}}(\mathcal{Q}') \text{ and }
{}^a\mathbf{G}(E)=V_E(\mathcal{Q})\cong_{E_\alpha} V_{E_\alpha}(\mathcal{Q}'),
\end{equation}
where $\alpha$ is extended to $E$ by setting $\alpha\colon x\mapsto x^{q^b}$ on each finite subfield  $\mathbb{F}_{q^{ab}}$ (which exhaust $E$; as above).
	
Let us prove this briefly: We need to show that the map $\beta\colon\mathbb{F}_{q^b}^{\oplus a}\to\mathbb{F}_{q^{ab}}$ given by 
$$
(f_0,\ldots,f_{a-1})\mapsto\sum_{l=0}^{a-1}{e_lf_l}
$$ 
is a bijection. It is easily checked that $\im(\beta)$ is a $\mathbb{F}_{q^a}$- and $\mathbb{F}_{q ^b}$-vector subspace of the codomain of $\beta$. Hence it must have at least $q^{\lcm(a,b)}=q^{ab}$ elements and so $\beta$ is surjective.

\medskip

Assume that the polynomial $p\in\overline{\mathbb{F}}_q[X_{ij}]$ does not vanish on $\mathbf{G}(\overline{\mathbb{F}}_q)$, so that, by Zariski density (cf.\ Lemma~\ref{lem:twst_grp_zar_dense}), it does also not vanish on ${}^a\mathbf{G}(E)$. Then $p.\varphi$ does not vanish on $V_{E_\alpha}(\mathcal{Q}')\subseteq V_{\overline{\mathbb{F}}_q}(\mathcal{Q}')$.
Consider the intersection $G\cap V_{\mathbb{F}_{q^a}}(p)={}^a\mathbf{G}(q^a)\cap V_{\mathbb{F}_{q^a}}(p)$. This is a finite set which is in bijection with $V_{\mathbb{F}_q}(\mathcal{Q}',p.\varphi)$. The cardinality of this can be estimated by 
$$
\card{V_{\mathbb{F}_q}(\mathcal{Q}')\cap V(p.\varphi)}\leq\deg(V_{\overline{\mathbb{F}}_q}(\mathcal{Q}'))\deg(p.\varphi)q^{\dim(V_{\overline{\mathbb{F}}_q}(\mathcal{Q}'))-1},
$$
where $V_{\overline{\mathbb{F}}_q}(\mathcal{Q}')=\overline{V_{E_\alpha}(\mathcal{Q}')}$ (which holds by \cite{borel2012linear}*{Corollary~18.3}) is connected as the algebraic $E_\alpha$-group $V_{E_\alpha}(\mathcal{Q}')$ is generated by root subgroups (since all the finite subgroups $V_{\mathbb{F}^{q^b}}(\mathcal{Q}')\cong{}^a\mathbf{G}(q^{ab})$ have this property for $b\in\mathcal{B}$). 
		
Furthermore, $\deg(V_{\overline{\mathbb{F}}_q}(\mathcal{Q}'))=O_{\mathbf{G}}(1)$, $\deg(p.\varphi)=\deg(p)\leq l(n-1)$, and $\dim(V_{\overline{\mathbb{F}}_q}(\mathcal{Q}'))=\dim(\mathbf{G})$, since, by the order formulas
$\card{{}^a\mathbf{G}(q^{ab})}=
\card{V_{\mathbb{F}_{q^b}}(\mathcal{Q}')}=\Theta(q^{b\dim(\mathbf{G})})$
for all $b\in\mathbb{Z}_+$ with $b\equiv 1$ modulo $a$. Hence, we obtain 
\begin{align*}
\frac{\card{G\cap V_{\mathbb{F}_{q^a}}(p)}}{\card{G}}=\frac{O_{\mathbf{G}}(\deg(p) q^{\dim(\mathbf{G})-1})}{\Theta(q^{\dim(\mathbf{G})})}=O_{\mathbf{G}}(\deg(p)q^{-1}).
\end{align*}
The desired conclusion follows precisely as in the previous subsection, since $[G:G']$ is bounded, which yields Proposition~\ref{BreGreGurTaoProp}.

\section{Short identities for almost simple groups}\label{sec:almost_simple_grps}

We prove the following lemma, which shows that a finite almost simple group might have a mixed identity of bounded length, whereas its simple socle does not.

\begin{lemma}
The group $\PGO_n^\bullet(q)$ has a mixed identity of length eight when $p\neq 2$.
\end{lemma}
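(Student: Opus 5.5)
The plan is to exhibit an explicit word of the shape $w(x)=(c_1x^{-1}c_2x)^{4}$, which has length eight, with constants coming from reflections and similitudes of $\GO_n^\bullet(q)$ that become involutions in $\PGO_n^\bullet(q)$. I have not verified that this concrete choice works; the plan below is how I would try to make it work. The guiding principle comes from Section~\ref{sec:alg_grps}. In the simply laced case $D_m$, every G-small semisimple element of the connected group is central, which is why $\POmega^\pm_{2m}(q)$ has no short identity. The almost simple group $\PGO_n^\bullet(q)$, however, contains elements outside the connected component. These are reflections $r_v$, acting as graph automorphisms, and, for $p\neq 2$, similitudes $h$ with $h^2=\lambda 1_V$ for a nonsquare $\lambda$. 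Conjugation by $x$ preserves the coset structure, and this rigidity is what I would exploit. Since $p\neq 2$, every reflection $r_v$ has image an involution in $\PGO$, and $r_v$ and $-r_v$ have the same image.

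The first step is to collect the linear-algebra facts. For an anisotropic $v$ and any $g\in\GO(V)$ we have $r_v^{g}=r_{v.g}$. The product of two reflections $r_ur_{u'}$ acts trivially on $\gensubsp{u,u'}^\perp$, so its projective rank norm is at most $2$. In $\PGO$, $r_u r_{u'}$ and $-r_ur_{u'}$ are the same element, and the latter acts as $-1$ on a subspace of codimension $2$.

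The second step is to find $c_1,c_2$ such that $c_1c_2^{x}$ has projective order dividing $4$ for every $x\in\PGO_n^\bullet(q)$. My first candidate is $c_1=r_u$ and $c_2=h$, a similitude with $h^2$ scalar. This choice is natural because then $c_1c_2^x$ squares, modulo scalars, to an element of the form $r_u r_{u.h^x}$. Writing $g=r_ur_{u.h^x}$, if this is moreover an involution, then $w$ vanishes identically. I would check this by writing $h$ in a hyperbolic basis, with $h$ swapping the two halves of each hyperbolic pair up to the factor $\lambda$. One then computes $u.h^x$ relative to $u$: for suitable $u$ the vectors $u$ and $u.h^x$ should be forced to be perpendicular, or to span a plane on which $h^x$ acts antidiagonally, so that $g$ becomes an involution. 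If this fails, I would fall back on $w=[r,x^{-1}sx]^{2}$ with $r,s$ reflections of the two distinct similarity classes of anisotropic vectors. I would then check by a $2\times2$ Gram-matrix computation whether the dihedral group $\gensubgrp{r,s^x}$ is forced to have order dividing $8$ projectively.

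The third step is nontriviality. I would verify that the chosen $w$ is reduced and not the identity in $\PGO_n^\bullet(q)\ast\gensubgrp{x}$. This holds automatically once $c_1,c_2\neq1$, the powers are taken of a cyclically reduced word, and the word is nonsingular with respect to its shape.

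The main obstacle is the second step: finding constants that force a bounded order uniformly in $x$, and hence uniformly in $n$ and $q$. The order of a product of two reflections in a conjugate position is a priori unbounded, since it is governed by the angle $f(u,u.x)^2/(f(u,u)f(u.x,u.x))$. The crux is therefore to choose constants for which this invariant is constrained for every $x$, presumably by the similitude factor $\lambda$ being a nonsquare together with $p\neq2$. This is where I would concentrate the computation.
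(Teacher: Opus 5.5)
\textbf{There is a genuine gap.} The second step is the whole content of the lemma, and it is never carried out. Moreover, every concrete choice you name fails.

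\emph{Your first candidate.} The constants must lie in $\PGO_n^\bullet(q)$. Here that group is the image of the isometry group $\GO_n^\bullet(q)$: the paper quantifies over all $g\in\GO_n^\bullet(q)$, and the socle has index at most four. An isometry with scalar square satisfies $h^2=\pm1_V$. Your $h$ ($e_i\mapsto f_i$, $f_i\mapsto\lambda e_i$) has multiplier $\lambda\neq 1$, so it is not available.

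Even if it were, this $h$ is self-adjoint, $f(x.h,y)=f(x,y.h)$. So $y\mapsto f(y,y.h)$ is a nondegenerate quadratic form; in your coordinates it is $\sum_i(a_i^2+\lambda b_i^2)$. Now $f(u,u.h^x)=f(y,y.h)$ with $y=u.x^{-1}$, and $y$ ranges over all vectors of norm $f(u,u)$. Hence this pairing takes many nonzero values. Also $u.h^x\parallel u$ never happens, because $h$ has no eigenvalue in $\mathbb{F}_q$. So $r_u$ and $r_{u.h^x}$ do not commute, $r_ur_{u.h^x}$ is not an involution, and $(r_ux^{-1}hx)^4$ is not an identity.

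\emph{Your fallback.} It fails for exactly the reason you flag. We have $[r,x^{-1}sx]^2=(r_ur_{v.x})^4$. Since $f(u,u)/f(v,v)$ is a nonsquare and $n\geq 3$, Witt's lemma shows that $f(u,v.x)$ takes every value as $x$ ranges over $\GO_n^\bullet(q)$. So the trace $4f(u,v.x)^2/(f(u,u)f(v,v))-2$ of the rotation on $\langle u,v.x\rangle$ takes about $q/2$ values. Order dividing $4$ would force it into $\{0,\pm2\}$. The similarity classes of $u$ and $v$ put no constraint on this angle. (Also, $(c_1x^{-1}c_2x)^4$ is singular, not nonsingular; nontriviality comes from reducedness alone.)

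\textbf{What is missing} is a mechanism that kills the relevant pairing for every $x$ at once. In the paper this mechanism is skewness. Take an Eichler transformation $t=1_V+h$ with $x.h=f(x,u)v-f(x,v)u$, where $u\perp v$ are linearly independent isotropic vectors. Then $h^2=0$ and $f(y,y.h)=0$ for all $y$. Take also a reflection $i=r_w=1_V-\tfrac{2}{f(w,w)}l$ with $x.l=f(x,w)w$. For each isometry $g$, skewness gives $l^ghl^g=0$, whence $i^ghi^gh=hi^ghi^g=-\tfrac{2}{f(w,w)}hl^gh$. Thus $c=i^gti^gt=1_V+h+i^ghi^g+i^ghi^gh$ commutes with $i^g$. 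Using $i^2=1$, the commutator $[x^{-1}ix\,t\,x^{-1}ix\,t,\,x^{-1}ix]$ reduces to a word of length eight that vanishes on $\GO_n^\bullet(q)$, for any form of Witt index at least $2$.

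The same skewness would in fact rescue your first idea. An isometry $h$ with $h^2=-1_V$ is skew-adjoint, so $f(y,y.h)=0$, hence $u\perp u.h^x$ and $(r_ux^{-1}hx)^4$ vanishes identically. But such an isometry exists only for $n=2m$ even, and then only for one of the two types. If $q\equiv 1\bmod 4$ it is type $+$; if $q\equiv 3\bmod 4$ it is type $-$ or $+$ according as $m$ is odd or even. So this route cannot give the lemma for all $\PGO_n^\bullet(q)$.
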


\begin{proof}
Let $t\coloneqq 1_V+h$ be an \emph{Eichler transformation}, where 
$$
x.h=f(x,u)v-f(x,v)u
$$ 
for two linearly independent isotropic vectors $u\perp v$) and 
$$
x.l\coloneqq f(x,w)w
$$ 
a rank-one operator for $w\in V$ anisotropic such that the reflection 
$$
i\coloneqq r_w=1_V-2\frac{1}{f(w,w)}l
$$ 
lies in $\GO_n^\bullet(q)$. Let $g\in\GO_n^\bullet(q)$ be an arbitrary element. Define 
$$
c\coloneqq i^gti^gt=i^g(1_V+h)i^g(1_V+h)=1_V+i^ghi^g+h+i^ghi^gh.
$$
Regarding the last term in the previous equation, we see that (respecting $h^2=0$)
\begin{align}
i^ghi^gh&=\left(1-\frac{2}{f(w,w)}l\right)^gh\left(1-\frac{2}{f(w,w)}l\right)^gh\nonumber\\
&=\frac{4}{f(w,w)^2}l^ghl^gh-\frac{2}{f(w,w)}hl^gh\label{eq:help6}
\end{align}
Moreover, we compute:
\begin{align*}
x.l^ghl^g &=f(f(x,w.g)(w.g).h,w.g)w.g\\
&=f(x,w.g)f((w.g).h,w.g)w.g=0,
\end{align*}
since for any vector $v\in V$ we have 
\begin{align*}
f(v.(1+h),v)&=f(v,v.(1-h))=f(v,v)+f(v.h,v)\\&
=f(v,v)-f(v.h,v)
\end{align*}
implying that $f(v.h,v)=0$ (as $p\neq 2$). Take $v=w.g$ to complete the argument.
Whence we obtain in Equation~\eqref{eq:help6}
$$
i^ghi^gh=-\frac{2}{f(w,w)}hl^g h.
$$	
But then the critical term $i^ghi^gh$ in the expression of $c$ is invariant under conjugation with $i^g$:
\begin{align*}
i^g(i^ghi^gh)i^g&=\left(1-\frac{2}{f(w,w)}l^g\right)\left(-\frac{2}{f(w,w)}hl^g h\right)\left(1-\frac{2}{f(w,w)}l^g\right)\\
&=-\frac{2}{f(w,w)}hl^g h,
\end{align*}
since $l^ghl^g=0$. Then $w(g)=[c,i^g]=1$, and $w$ is of length eight and it descends to a mixed identity of $\PGO_n^\bullet(q)$. Thus we are done.
\end{proof}

\begin{remark}
Indeed, for $n=2m$ even, the finite simple group $\POmega_{2m}^{\pm}(q)\trianglelefteq\PGO_{2m}^\pm(q)$ does not admit a mixed identity of bounded length (by Theorem~\ref{thm:main_finintro}), whereas its full automorphism group does have a such a mixed identity.    
\end{remark}

\section*{Acknowledgments}

We thank Vadim Alekseev and Ralf Köhl for illuminating discussions about the project.

\end{document}